\newtheorem{thm}{Theorem}[section]
\newtheorem{lem}[thm]{Lemma}
\newtheorem{prop}[thm]{Proposition}
\theoremstyle{definition}
\newtheorem{remq}{Remark}[section]
\newcommand{\R}{\mathbb{R}}
\newcommand{\N}{\mathbb{N}}
\newcommand{\ol}{\overline}
\newcommand{\lra}{\longrightarrow}
\newcommand{\be}{\begin{enumerate}}
\newcommand{\ee}{\end{enumerate}}
\newcommand{\mC}{\mathcal{C}}
\newcommand{\mF}{\mathcal{F}}
\newcommand{\mM}{\mathcal{M}}
\newcommand{\mS}{\mathcal{S}}
\newcommand{\mU}{\mathcal{U}}
\newcommand{\mV}{\mathcal{V}}
\newcommand{\mW}{\mathcal{W}}
\newcommand{\ds}{\displaystyle}
\newcommand{\ps}[2]{\langle\,#1,#2\,\rangle}
\newcommand{\di}[1]{\lfloor #1 \rfloor}
\newcommand{\eps}{\varepsilon}
\newcommand{\llim}{\lim\limits}
\newcommand{\infl}{\inf\limits}
\newcommand{\supl}{\sup\limits}
\newcommand{\suml}{\sum\limits}
\newcommand{\m}{\mathfrak{m}}
\begin{document}

\title[Entire solutions to nonlinear scalar field equations]{Entire solutions to nonlinear scalar field equations with indefinite linear part}
\author{Gilles \'Ev\'equoz}
\author{Tobias Weth}
\address{\ Institut f\"ur Mathematik, Johann Wolfgang Goethe-Universit\"at,
Robert-Mayer-Str. 10, 60054 Frankfurt am Main, Germany}
\email{evequoz@math.uni-frankfurt.de}
\email{weth@math.uni-frankfurt.de}

\keywords{Nonlinear Schr{\"o}dinger equation, generalized Nehari manifold, topological degree, degenerate setting.}
\subjclass[2000]{35J60, 35J10; 35A15, 47H11}

\begin{abstract}
We consider the stationary semilinear Schr\"odinger equation
\begin{equation*}
 -\Delta u + a(x) u = f(x,u), \qquad u\in H^1(\R^N),
\end{equation*}
where $a$ and $f$ are continuous functions converging to
some limits $a_\infty>0$ and $f_\infty=f_\infty(u)$ as $|x|\to\infty$. In the indefinite
setting where the Schr{\"o}dinger operator $-\Delta 
+a$ has negative eigenvalues, we combine a reduction method with a topological argument to prove the existence 
of a solution of our problem under weak one-sided
asymptotic estimates. The minimal energy level need not be attained in this case.
In a second part of the paper, we prove the existence of ground-state
solutions under more restrictive assumptions on $a$ and $f$. We stress
that for some of our results we also allow zero to lie in the spectrum of 
$-\Delta + a$.
\end{abstract}
\maketitle

\section{Introduction and main results}
Consider the semilinear elliptic equation
\begin{equation}\label{eqn:nls_f}
  -\Delta u + a(x) u = f(x,u), \qquad u\in H^1(\R^N)
\end{equation}
where $a$ and $f$ are continuous functions, $f$ being superlinear and subcritical. 
We are interested in the existence of nontrivial 
solutions in the case where 
\begin{itemize}
  \item[(A1)] $\llim_{|x|\to\infty}a(x)=a_\infty$ and $\llim_{|x|\to\infty}f(x,u)=f_\infty(u)$ hold
  uniformly for $u$ in \\ bounded sets, for some $a_\infty>0$ and $f_\infty\in\mC(\R)$.
\end{itemize}

In the definite case, $\infl_{x\in\R^N} a(x)>0$, the existence of solutions to Problem \eqref{eqn:nls_f} has
been extensively studied over the past twenty-five years, see
e.g. \cite{BaLions97, cds, CW04, dn, h, li, zh}, and most attention has been
given to nonlinearities of the type 
\begin{equation} \label{eq:6}
  f(x,u)=q(x)|u|^{p-2}u   
\end{equation}
with $p>2$, $p <\frac{2N}{N-2}$ in case $N \ge 3$ and a positive
function $q$ on $\R^N$ converging to some positive limit $q_\infty$ as
$|x| \to \infty$. The main issue in studying
this equation is to overcome the lack of compactness of the problem. 
For example, the associated energy functional
\[ J(u)=\frac{1}{2}\int_{\R^N}|\nabla u|^2+a(x)u^2\, dx -\int_{\R^N}F(x,u)\, dx,\]
where $\ds F(x,u)=\int_0^u f(x,s)\, ds$, does not satisfy the Palais-Smale condition, since the 
embedding $H^1(\R^N)\hookrightarrow L^p(\R^N)$ is not compact. Furthermore, the set of solutions of the limit problem
\begin{equation}\label{eqn:nls_f_inf}
  \left\{ \begin{array}{c} -\Delta u + a_\infty u = f_\infty(u), \\ u\in H^1(\R^N),\end{array}\right.
\end{equation}
is invariant under translations and hence not compact. On the other
hand, the concen\-tration-compactness principle
of P.-L. Lions \cite{LIONS84_1, LIONS84_2} provides a tool to
understand the nature of the lack of compactness. Using this
principle, Ding and Ni \cite{dn} established the existence of a
ground-state solution in the special case \eqref{eq:6}, $a\equiv 1$
and assuming $q_\infty = \inf \limits_{\R^N}q$. Here, by a ground-state
solution, we mean a solution with least possible energy value. 
It is easy to see that such a solution, which can be obtained by
constrained minimization, does not exist in the special case where
$a\equiv 1$, \eqref{eq:6} holds and 
$q_\infty > q(x)$ for all $x \in \R^N$. 
On the other hand, assuming \eqref{eq:6}, $a\equiv 1$ and only the
weak one-sided estimate
\[q(x)\geq q_\infty - C\, e^{-(2+\delta)\sqrt{a_\infty}|x|},\]
for some $C, \delta>0$, Bahri and Li \cite{BaLi90} (see also
\cite{BaLions97}) still could prove the existence of a positive
solution of \eqref{eqn:nls_f} by topological arguments combined with a
minimax principle. This solution is -- in general -- not a ground-state solution. 

The main purpose of the present article is to extend the two kinds of
results mentioned above to the (possibly) indefinite case, i.e., to the case
where $\inf\sigma(-\Delta+a)<0$. Here and in the following, $\sigma(-\Delta +a)$ denotes the
spectrum of the operator $-\Delta + a$. Since it follows from (A1)
that the essential spectrum of $-\Delta +a$ is given as the interval
$[a_\infty,\infty)$, the nonpositive part of
$\sigma(-\Delta +a)$ may only consist of finitely many isolated
eigenvalues. In particular,
the operator $-\Delta +a$ is negative (semi-)definite on a
finite-dimensional subspace. In order to obtain results in this
setting, one has to control the effect of this negative spectral
subspace. Our approach to this problem uses the generalized Nehari
manifold $\mM$ corresponding to \eqref{eqn:nls_f}, which -- in a
different setting -- was
introduced by Pankov \cite{PANKOV05} and studied further in
\cite{SW2009,W} (see also \cite{RT,RY} for a related
approach). The set $\mM$ -- which will be defined in
Section~\ref{sec:preliminaries} below -- contains all solutions of
\eqref{eqn:nls_f}, and minimizers of $J$ on $\mM$ are solutions
of \eqref{eqn:nls_f}. Therefore it is natural to call these minimizers
ground-state solutions of \eqref{eqn:nls_f}. As in the definite case,
one may therefore distinguish between ground-state solutions and
further solutions obtained, e.g., by minimax principles on $\mM$ relying
on topological arguments. We note that 
some existence results in the indefinite case have already been
obtained by Huang and Wang \cite{HuangWang08} using a classical linking
theorem instead of the generalized Nehari manifold. We will show that, under weaker assumptions than 
in \cite{HuangWang08}, a ground-state solution of
\eqref{eqn:nls_f} exists. Moreover, we will also treat asymptotic
conditions on $a$ and $f$ where -- similarly as in the paper \cite{BaLi90} for the
definite case -- no ground-state solution can be expected to exist.

Another aim of this paper is to allow for more general nonlinearities $f$ than in previous
papers. In order to state our first
main result, we list assumptions on $f \in\mC(\R^N\times\R)$. 
\begin{itemize}
  \item[(F1)] $|f(x,u)|\leq C_0(1+|u|^{p-1})$ for all $(x,u)\in\R^N\times\R$ with some constant $C_0>0$
  and some $2<p<2^\ast=\frac{2N}{N-2}$ if $N\geq 3$, resp. $2<p<\infty$ if $N=1,2$;

  \item[(F2)] $f(x,u)=o(|u|)$  as $|u|\to 0$, uniformly in $x$;
 
  \item[(F3)] $\ds\frac{F(x,u)}{|u|^2}\to\infty$, uniformly in $x$, as $|u|\to\infty$, where $\ds F(x,u)=\int_0^u f(x,s)\, ds$;
 
  \item[(F4)] The mappings $\ds u\mapsto \frac{f(x,u)}{|u|}$ and $\ds u\mapsto \frac{f_\infty(u)}{|u|}$ are strictly increasing in
  $(-\infty,0)\cup(0,\infty)$ for all $x\in\R^N$;

  \item[(F5)] $f_\infty$ is odd, and for some $\theta>0$ the mapping $\ds u\mapsto \frac{f_\infty(u)}{u^{1+\theta}}$
  is decreasing on $(0,\infty)$,
\end{itemize}
We also need the following stronger variant of (F2).
\begin{itemize}
  \item[(F2$'$)] There exists $\nu>0$ such that $f(x,u)=o(|u|^{1+\nu})$  as $|u|\to 0$, uniformly in $x$;
\end{itemize}

Our first result reads as follows.
\begin{thm}\label{thm:exist}
  Suppose (A1), (F1), (F2$\,'\!$) and (F3)--(F5) hold and $0 \notin \sigma(-\Delta +a)$. If
  $N\geq 2$ and the limit problem \eqref{eqn:nls_f_inf} admits a unique positive
  solution (up to translations), then \eqref{eqn:nls_f} has at least one nontrivial solution provided
  there exists $C_1, C_2\geq 0$ and $\alpha>2$ such that
  \begin{equation}\label{eqn:thm1.1:a_f}
    a(x)\leq a_\infty + C_1 e^{-\alpha\sqrt{a_\infty}|x|}\text{ and } F(x,u)\geq F_\infty(u)-C_2 e^{-\alpha\sqrt{a_\infty}|x|}(u^2 + u^p)
  \end{equation}  
  holds for all $x\in\R^N$, $u> 0$.
\end{thm}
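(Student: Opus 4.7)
The plan is to combine the generalized Nehari manifold reduction with a topological argument in the spirit of Bahri--Li \cite{BaLi90}. Since $0\notin\sigma(-\Delta+a)$, the spectral decomposition $H^1(\R^N)=H^-\oplus H^+$ (with $H^-$ finite-dimensional) is clean, and under (A1), (F1)--(F4), the framework of \cite{PANKOV05,SW2009,W} produces, for each $v\in H^+\setminus\{0\}$, a unique maximizer $m(v)\in\mM$ of $J$ on $\R_+v\oplus H^-$. The $C^1$ reduction $\Psi:=J\circ m\colon S^+\to\R$ on the unit sphere $S^+\subset H^+$ has critical points in bijection with the nontrivial critical points of $J$. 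The first task is a concentration--compactness analysis of Palais--Smale sequences of $\Psi$: either strong convergence holds, or $(m(v_n))$ splits off finitely many translated solutions of the limit problem \eqref{eqn:nls_f_inf}. By the uniqueness (up to translation) of the positive solution $U_\infty$ of \eqref{eqn:nls_f_inf} and by (F4)--(F5), the ground-state level $c_\infty:=J_\infty(U_\infty)$ is the smallest possible quantum of lost mass, so $(PS)_c$ holds for every $c\in(c_\infty,2c_\infty)$.

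The second step is to exhibit a topologically nontrivial family in $\mM$ with energies strictly below $2c_\infty$. Since $U_\infty(x)\leq Ce^{-\sqrt{a_\infty}|x|}$, for $R>0$ and $\xi\in S^{N-1}$ we set $w_{R,\xi}(x):=U_\infty(x-R\xi)$ and $v_{R,\xi}:=m(P^+ w_{R,\xi})\in\mM$, where $P^+$ is the orthogonal projection onto $H^+$. Combining \eqref{eqn:thm1.1:a_f} with the exponential decay of $U_\infty$,
\[
  J(v_{R,\xi})-c_\infty\;\leq\;C\!\int_{\R^N}e^{-\alpha\sqrt{a_\infty}|x|}\bigl(U_\infty^2+U_\infty^p\bigr)(x-R\xi)\,dx+o(1),
\]
and the right-hand integral tends to $0$ uniformly in $\xi\in S^{N-1}$ as $R\to\infty$ (the assumption $\alpha>2$ provides strict dominance over the decay rate $2\sqrt{a_\infty}$ of $U_\infty^2$). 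Hence $J(v_{R,\xi})=c_\infty+o(1)$; in particular, for $R$ large enough, $\max_{\xi\in S^{N-1}}J(v_{R,\xi})<2c_\infty$.

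The third step is the topological linking. By the splitting analysis, elements $u\in\mM$ with $J(u)$ sufficiently close to $c_\infty$ concentrate around a single translate of $U_\infty$, so the barycenter $\beta(u):=\|u\|_{L^2}^{-2}\int_{\R^N}x\,u^2\,dx$ is well-defined and continuous on the sublevel set $\{\Psi\leq c_\infty+\eps\}\subset S^+$, and satisfies $\beta(v_{R,\xi})/R\to\xi$ uniformly in $\xi\in S^{N-1}$ as $R\to\infty$. Mimicking the Bahri--Li scheme \cite{BaLi90}, one introduces a minimax value
\[
  c^*:=\infl_{h\in\Gamma}\maxl_{\eta\in\ol{B^N}}\Psi(h(\eta))
\]
over the class $\Gamma$ of continuous maps $h\colon\ol{B^N}\to S^+$ whose boundary restriction agrees with $\xi\mapsto P^+w_{R,\xi}/\|P^+w_{R,\xi}\|$. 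Step~2 gives $c^*<2c_\infty$, while the nontrivial Brouwer degree of $\xi\mapsto\beta(v_{R,\xi})/R$ on $S^{N-1}$ forces $c^*>c_\infty$ via a standard obstruction argument (this is where $N\geq 2$ enters essentially, as the degree on $S^0$ is void). The $(PS)_{c^*}$ condition then yields a critical point of $\Psi$ at level $c^*$, producing a nontrivial solution of \eqref{eqn:nls_f}.

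The main obstacle is the splitting analysis in the indefinite setting: one must carefully handle the negative subspace $H^-$ through the Nehari maximizer $m(v)$ and ensure the mass lost along non-converging Palais--Smale sequences is quantized in multiples of $c_\infty$. This depends crucially on the uniqueness of the positive solution of \eqref{eqn:nls_f_inf} and on (F5), which excludes lower-energy sign-changing bubbles for the limit problem; the construction of a continuous barycenter on a set whose infimum $c_\infty$ is not attained also requires quantitative concentration estimates for near-minimizers.
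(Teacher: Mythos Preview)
Your outline captures the overall architecture correctly, but there is a genuine gap at the heart of the argument: the inequality $c^*<2c_\infty$. In Step~2 you only estimate $J$ on the \emph{boundary} family $\{v_{R,\xi}:\xi\in S^{N-1}\}$, showing $J(v_{R,\xi})=c_\infty+o(1)$. This says nothing about $c^*$, which is an inf--sup over maps $h:\overline{B^N}\to S^+$. To bound $c^*$ from above you must exhibit at least one admissible \emph{filling} $h\in\Gamma$ and control $\max_{\eta\in\overline{B^N}}\Psi(h(\eta))$. The paper does this by mapping the interior of the ball to (projections onto $\mM$ of) convex combinations of \emph{two} translated ground states, $(1-s)(y\ast u_\infty)+s(z\ast u_\infty)$, and then proving the delicate interaction estimate (their Lemma~3.3) that the resulting energies stay strictly below $2c_\infty$. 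This is where (F2$'$), (F5), the decay of the negative eigenfunctions, and the hypothesis $\alpha>2$ are all used in an essential way: one has to show that the attractive interaction term $\int f_\infty(y\ast u_\infty)(z\ast u_\infty)\,dx$ dominates the error terms coming from $a-a_\infty$, $F-F_\infty$, and the correction $h_\infty\in E^-$ produced by the Nehari projection. Your proposal contains no analogue of this estimate, and without it the minimax value could a priori equal $2c_\infty$, where Palais--Smale fails.

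Two secondary points. First, your barycenter $\beta(u)=\|u\|_{L^2}^{-2}\int x\,u^2\,dx$ is not defined on all of $H^1(\R^N)$; the paper uses the bounded variant $\|u\|_{L^p}^{-p}\int \frac{x}{|x|}|u|^p\,dx$, which takes values in $B_1(0)$ and is uniformly continuous on the relevant sets. Second, the lower bound $c^*>c_\infty$ does not follow directly from a degree computation: one needs the intermediate quantity $I_b=\inf\{J(u):u\in\mM,\ \beta(u)=b\}$ and the dichotomy ``either $I_b=c$ for some $b$ (and then one produces a critical point directly via Ekeland plus the splitting lemma, using that the barycenter of a single escaping bump has modulus tending to~$1$), or $I_b>c_\infty$ for all $b$, whence $c^*\ge I_b>c_\infty$''. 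Your last paragraph hints that you see a difficulty here, but the resolution is this case distinction rather than a concentration estimate for near-minimizers.
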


We point out that nonlinearities of the type \eqref{eq:6} satisfy
(F1)--(F5), as well as the weakly growing superlinear nonlinearity $f(x,u)=q(x)u\log(1+|u|^s)$ with $s> 0$,
provided $q(x)>0$ for all $x\in\R^N$ and $\llim_{|x|\to\infty}q(x)=
q_\infty>0$ hold. The positive ground-state
associated to the limit problem is unique for these nonlinearities,
as follows from Theorem 1.1 in \cite{JANG10}. Moreover, the asymptotic estimate
\eqref{eqn:thm1.1:a_f} is fulfilled if $q(x) \ge q_\infty- C_2 e^{-\alpha \sqrt{a_\infty}|x|}$ for all $x \in \R^N$. 

For weakly growing superlinear nonlinearities of the type $f(x,u)=q(x)u\log(1+|u|^s)$, 
Theorem~\ref{thm:exist} is even new in the definite case where the function $a$ is positive in
$\R^N$. We point out that these weakly growing superlinear nonlinearities do not satisfy the usual 
{\em Ambrosetti-Rabinowitz growth condition} \cite{ambrosetti-rabinowitz:73} which guarantees the boundedness of 
Palais-Smale sequences. We also note that the assumption \eqref{eqn:thm1.1:a_f} is weaker
than the corresponding assumption in the paper \cite{BaLi90} of Bahri
and Li, where only the case $a \equiv 1$ was considered.   
In the indefinite case where $\inf \sigma(-\Delta +a)<0$, we
are not aware of any existence result under assumption \eqref{eqn:thm1.1:a_f}.
Our approach to prove Theorem~\ref{thm:exist} is strongly inspired by
the work of Bahri
and Li \cite{BaLi90} in the definite case, but there are crucial
differences. Most importantly, while the topological minimax argument
of \cite{BaLi90} is carried out on a unit sphere in a weighted
$L^p$-space, we have to use projection maps onto the generalized
Nehari manifold. Therefore the required asymptotic estimates are much
harder to derive. In particular, we need to deal with
eigenfunctions of $-\Delta +a$ corresponding to negative eigenvalues
and their asymptotic decay. In \cite{BaLi90}, these difficulties were 
avoided by assuming $a \equiv 1$ and therefore dealing with the most
simple spectral theoretic situation.

As in the definite case treated in \cite{BaLi90}, the solution 
obtained by Theorem~\ref{thm:exist} is not a ground-state
solution in general. In the following result, we show that, strengthening the condition on
$a$ or $f$ in the spirit of \cite{CW04}, the problem \eqref{eqn:nls_f} 
admits a ground-state solution even without the condition (F5) and
with (F2) instead of (F2$'$). 

\begin{thm}\label{thm:gs}
  Suppose (A1) and (F1)--(F4) hold. If there exists $\theta>0$ and $r_1>0$ such that 
  \begin{equation}\label{eqn:theta_delta}
    \inf_{\substack{x\in\R^N \\0<|u|\leq r_1}}\frac{|f(x,u)|}{|u|^{1+\theta}}>0
  \end{equation}
  holds, then \eqref{eqn:nls_f} admits a (nontrivial) ground-state solution, provided one of the following sets of conditions
  is satisfied.
  \begin{itemize}
    \item[(a)] There exists $S_0, C_1>0$ and $0<\alpha<\frac{2+\theta}{1+\theta}$ such that
    \begin{equation}\label{eqn:A2_gs_1}
      a(x)\leq  a_\infty - C_1 e^{-\alpha\sqrt{a_\infty}|x|}\quad \text{ for all }|x|\geq S_0,
    \end{equation}
    and there exists $\mu>\alpha$, $C_2\geq 0$ for which
    \begin{equation}\label{eqn:F_gs_1}
      F(x,u)\geq F_\infty(u) - C_2 e^{-\mu\sqrt{a_\infty}|x|}(u^2 +|u|^p)
      \quad\text{holds for all }x\in\R^N, \; u\in\R.
    \end{equation}
   \item[(b)] There exists $S_0>0$ for which $a(x)\leq a_\infty$ holds for all $|x|\geq S_0$, and
    for every $\eta>0$, there exists $0<\alpha<\frac{2+\theta}{1+\theta}$, $C_\eta, S_\eta>0$ such that
    \begin{equation}\label{eqn:A2_F_gs_2}
     F(x,u)\geq F_\infty(u) + C_\eta e^{-\alpha\sqrt{a_\infty}|x|},
     \quad\text{for all }|x|\geq S_\eta,\; \eta\leq |u|\leq\frac{1}{\eta}.
    \end{equation}
  \end{itemize}  
  Furthermore, if $0\notin\sigma(-\Delta+a)$ the conclusion also holds without \eqref{eqn:theta_delta},
  and every $0<\alpha<2$ is admissible in (a) and (b) above.
\end{thm}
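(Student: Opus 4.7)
The strategy is to minimize $J$ over the generalized Nehari manifold $\mM$, whose construction under (F1)--(F4) is recalled in Section~\ref{sec:preliminaries}. To each $u \in H^1(\R^N)$ having a nontrivial component outside the nonpositive spectral subspace of $-\Delta+a$, the manifold associates a unique element of $\mM$ which maximizes $J$ along the affine set spanned by $u$ and that spectral subspace. Setting $c := \inf_\mM J$ and denoting by $c_\infty$ the analogous ground-state energy of the autonomous limit problem \eqref{eqn:nls_f_inf}, the theorem will follow from the strict comparison $c < c_\infty$, via a standard splitting / concentration-compactness argument.

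First I would take a minimizing sequence $(u_n) \subset \mM$. The growth bound \eqref{eqn:theta_delta}, combined with (F3) and the generalized Nehari identity, substitutes for the usual Ambrosetti--Rabinowitz hypothesis and yields boundedness of $(u_n)$ in $H^1(\R^N)$; this point is essential for the logarithmic-type nonlinearities discussed after Theorem~\ref{thm:exist}. A splitting lemma in the spirit of \cite{LIONS84_1,LIONS84_2} then produces two alternatives: either a subsequence converges strongly to a nonzero limit $u_0$, which by standard arguments lies in $\mM$ with $J(u_0) = c$ and is therefore a ground state; or $u_n$ decomposes as $u_n = u_0 + w_n(\cdot - y_n) + o(1)$ with $|y_n|\to\infty$ and $w_n$ converging weakly to a nontrivial solution of \eqref{eqn:nls_f_inf}. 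In the latter case a Brezis--Lieb type splitting of $J$ yields $c \ge c_\infty$, so proving $c < c_\infty$ suffices.

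To establish $c < c_\infty$, let $w_\infty > 0$ be a positive ground state of \eqref{eqn:nls_f_inf} (existence by, e.g., \cite{SW2009,W}), enjoying the classical exponential decay $w_\infty(x) \le C e^{-\sqrt{a_\infty}|x|}$ and a matching lower bound on compact sets. For $|y|$ large, project $w_\infty(\cdot-y)$ onto $\mM$, producing a test element $U_y = t_y\, w_\infty(\cdot-y) + v_y$ with $t_y \to 1$ and $v_y$ in the nonpositive spectral subspace of $-\Delta+a$ tending to $0$ in $H^1(\R^N)$ as $|y|\to\infty$. Expanding $J(U_y) - c_\infty$, the non-autonomous gain under (a) is
\[
\tfrac{1}{2}\int_{\R^N}(a(x) - a_\infty)\, U_y^2\, dx \le -c_1 e^{-\alpha\sqrt{a_\infty}|y|}
\]
by \eqref{eqn:A2_gs_1}, while \eqref{eqn:F_gs_1} prevents a compensating loss from the nonlinearity; under (b), the gain comes directly from \eqref{eqn:A2_F_gs_2}. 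The projection cost --- the change in $J$ due to replacing $w_\infty(\cdot-y)$ by $U_y$ --- is estimated via the quadratic part of $J$ on the spectral subspace combined with the lower bound \eqref{eqn:theta_delta} applied to the tail of $w_\infty$, leading to order $e^{-\frac{2+\theta}{1+\theta}\sqrt{a_\infty}|y|}$. The hypothesis $\alpha < (2+\theta)/(1+\theta)$ is precisely what forces the gain to dominate.

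When $0 \notin \sigma(-\Delta+a)$, the quadratic form on the nonpositive spectral subspace enjoys a uniform spectral gap, so the projection cost is controlled purely at quadratic order, namely $O(e^{-2\sqrt{a_\infty}|y|})$; consequently \eqref{eqn:theta_delta} can be dropped and every $\alpha < 2$ is admissible. The main obstacle in the degenerate case $0 \in \sigma(-\Delta+a)$ is precisely this projection estimate: within the kernel of $-\Delta+a$ the component $v_y$ is driven by nonlinear coupling to $w_\infty(\cdot-y)$ alone, and one must carefully match the resulting $H^1$-size of $v_y$ against the superlinear growth encoded in \eqref{eqn:theta_delta} in order to extract the exponent $(2+\theta)/(1+\theta)$ and close the comparison.
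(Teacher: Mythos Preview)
Your overall strategy coincides with the paper's: reduce to the strict inequality $c<c_\infty$ and invoke the splitting argument (Proposition~\ref{prop:c<cinf_f} and Lemma~\ref{lem:Benci-Cerami}), then verify $c<c_\infty$ by projecting a far-translated limit ground state onto $\mM$. Two points in your sketch need correction.

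First, you misattribute the role of \eqref{eqn:theta_delta}. Boundedness of Palais--Smale (or minimizing) sequences on $\mM$ follows from (F1)--(F4) alone, via the coercivity Lemma~\ref{lem:coercivity}; condition \eqref{eqn:theta_delta} plays no part there. It enters only where your last paragraph correctly locates it, in bounding the kernel component $h_y^2\in E^0$ of the Nehari projection when $0\in\sigma(-\Delta+a)$. Concretely (Lemma~\ref{lem:h_t_cutoff_f_zero}): testing the Nehari identity against $h_y^1+h_y^2$ gives $\|h_y^1\|^2 + \int \frac{f(x,\cdot)}{\cdot}(h_y^1+h_y^2)^2 \lesssim \|h_y^1+h_y^2\|\,e^{-A_\eps|y|}$, but the quadratic form is blind to $E^0$; the lower bound $\frac{f(x,u)}{u}\gtrsim |u|^\theta$ from \eqref{eqn:theta_delta} then yields $\|h_y^1+h_y^2\|^{2+\theta}\lesssim \|h_y^1+h_y^2\|\,e^{-A_\eps|y|}$ on the finite-dimensional space $E^-\oplus E^0$, producing the rate $e^{-\frac{1}{1+\theta}A_\eps|y|}$ and, after one more iteration, the exponent $\frac{2+\theta}{1+\theta}$.

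Second, and more substantively, the paper does \emph{not} project the raw translate $w_\infty(\cdot-y)$ as you propose, but a truncated version $u_\infty^{R_y}(\cdot)=\chi(\cdot/R_y)u_\infty(\cdot)$ with $R_y=\gamma|y|$, $0<\gamma<1$, so that $y\ast u_\infty^{R_y}$ is supported in $\{|x|\ge(1-\gamma)|y|\}$. This is essential because assumptions (a) and (b) only control $a-a_\infty$ and $F-F_\infty$ for $|x|\ge S_0$ (resp.\ $|x|\ge S_\eta$): the cutoff forces every integral to land in that region. Without it you can still absorb the bounded-$|x|$ contribution in case (a) via exponential decay of $w_\infty$, but in case (b) the tail region $\{|t_y w_\infty(\cdot-y)|<\eta\}$ carries no quantitative lower bound on $F-F_\infty$ and would swamp the gain $e^{-\alpha\sqrt{a_\infty}|y|}$. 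The price of the cutoff is that only the weaker decay $|u_\infty(x)|\le C_\eps e^{-(1-\eps)\sqrt{a_\infty}|x|}$ is available (no (F5), hence no sharp asymptotics as in \eqref{eqn:decay_u_infty}), which is why the paper carries $A_\eps=(1-\eps)\sqrt{a_\infty}$ throughout and closes by choosing $\eps,\gamma$ appropriately.
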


To our knowlegde, Theorem~\ref{thm:gs} is the first result in this
(noncompact) setting yielding existence of solutions
in the case where $0$ is an eigenvalue of $-\Delta+a$. Since the eigenfunctions associated the 
eigenvalue $0$ exhibit a slower decay rate than the ones corresponding to negative eigenvalues, we cannot expect, 
in general, to allow every value $\alpha\in(0,2)$ in Theorem \ref{thm:gs}.
Nevertheless, any $\alpha\in(0,1]$ is allowed, since $\frac{2+\theta}{1+\theta}=1+\frac{1}{1+\theta}>1$ for $\theta>0$.
In the case where $0\notin\sigma(-\Delta+a)$ is considered,
Theorem~\ref{thm:gs} is a generalization of results of Huang and Wang
\cite{HuangWang08}. More precisely, we obtain
the existence of solutions under weaker assumptions upon the potential
$a$. In particular, we only need to control the behavior of
$a_\infty-a(x)$ for large $x$. Moreover, Theorem \ref{thm:gs} provides
the additional information that ground-state solutions exist. 
\medskip

The paper is organized as follows. We first state and prove some basic properties of the
energy functional and the generalized Nehari manifold. Some crucial energy estimates are
then derived before the actual proof of Theorem \ref{thm:exist} is given. In the last section
we give some further energy estimates under the assumptions of Theorem \ref{thm:gs} and conclude
by proving the latter. Finally, in the appendix we prove a nonlinear splitting property for weakly 
converging sequences in $H^1(\R^N)$, which is necessary for the decomposition of Palais-Smale
sequences of $J$. Here we adapt a result in \cite[Appendix]{AckWeth05} which
was stated for the periodic setting. In contrast to earlier results of
this type (see e.g.\cite{KrSz98}), no Lipschitz continuity of $f$ or
bounds on $f'$ are required here. 

\section{Preliminaries}\label{sec:preliminaries}
Throughout this paper we shall use the following notation. For a function $u$ on $\R^N$ and an element $y\in\R^N$,
we write $y\ast u$ for the translate of $u$, i.e., 
\[(y\ast u)(x):=u(x-y),\quad x\in\R^N.\] 
Let $X$ be any normed space, we will denote by $B_r(u)$ the open ball in $X$ centered at $u\in X$ with radius $r>0$.

According to (A1), the essential spectrum of $-\Delta + a$ is equal to $[a_\infty,+\infty)$ 
(see e.g. \cite[Theorem 3.15]{STUART98}), and $\sigma(-\Delta+a)\cap (-\infty,a_\infty)$ consists (at most)
of isolated eigenvalues of finite multiplicity. Let
\[\inf\sigma(-\Delta+a)=\lambda_1\leq \ldots\leq \lambda_n<0=\lambda_{n+1}=\ldots=\lambda_{n+l}\; (<\lambda_{n+l+1})\]
denote the nonpositive eigenvalues (repeated according to multiplicity), and consider a corresponding orthonormal
set of eigenfunctions $e_1, \ldots, e_{n+l}\in H^2(\R^N)\cap \mathcal{C}(\R^N)$.
Setting $E^-=\text{span}\{e_1, \ldots, e_n\}$, $E^0=\text{span}\{e_{n+1},\ldots,e_{n+l}\}$ (with the convention that $E^0=\{0\}$ if $l=0$)
and $E^+=(E^-\oplus E^0)^\bot$, we have the so-called spectral decomposition
\begin{equation}\label{eqn:spec_dec}
  E:=H^1(\R^N)=E^+\oplus E^0\oplus E^-
\end{equation}
corresponding to $-\Delta + a$.
Moreover, the eigenfunctions satisfy the following exponential decay
estimates (see \cite[Theorem 3.19]{STUART98}) which play a crucial role in
the sequel.

If $1\leq i\leq n$, then  
\begin{equation}\label{eqn:eigen_neg}
  \lim_{|x| \to \infty} |e_i(x)|e^{(1+\delta)\sqrt{a_\infty}|x|} = 0 \qquad \text{for every $0<\delta<
\sqrt{1+\textstyle\frac{|\lambda_i|}{a_\infty}}\,-1$.}
\end{equation}
On the other hand, if $n+1\leq i\leq n+l$, then $\lambda_i=0$ and 
\begin{equation}\label{eqn:eigen_zero}
  \lim_{|x| \to \infty} |e_i(x)|e^{(1-\delta)\sqrt{a_\infty}|x|} = 0 \qquad \text{for every $\delta>0$.}
\end{equation}

\subsection{Energy functional and generalized Nehari manifold}
We assume for the remainder of this section that (A1), (F1)--(F4) hold, and denote by $\|\cdot\|$ an equivalent norm on 
$E=H^1(\R^N)$ which satisfies
\[\int_{\R^N} |\nabla u|^2 + a(x) u^2\, dx = \|u^+\|^2 - \|u^-\|^2,\qquad\text{ for }u\in E.\]
Here and in the sequel, we let $u^\pm$ and $u^0$, respectively, be the projections of $u\in E$ onto $E^\pm$ and $E^0$, respectively,
according to the decomposition \eqref{eqn:spec_dec}.

The solutions of \eqref{eqn:nls_f} are critical points of the energy functional $J$: $E$ $\to$ $\R$ given by
\[J(u)= \frac{1}{2}(\|u^+\|^2 -\|u^-\|^2) - \int_{\R^N}F(x,u)\, dx, \quad u\in E.\]
Considering the generalized Nehari manifold (see e.g. \cite[Chapter 4]{W})
\[\mM=\{u\in E\backslash (E^-\oplus E^0)\, :\, J'(u)(tu+h)=0,\;\;t\geq 0,\;h\in E^-\oplus E^0\},\]
we set
\[c=\inf_{u\in\mM}J(u).\]
For the limit problem \eqref{eqn:nls_f_inf}, we set 
\[J_\infty(u)=\frac12\int_{\R^N} |\nabla u|^2 + a_\infty u^2\, dx - \int_{\R^N}F_\infty(u)\, dx\quad\text{ for }u\in E,\]
consider the associated Nehari manifold
$\mM_\infty=\{u\in E\backslash\{0\}\,:\, J_\infty'(u)u=0\}$ 
and let $c_\infty=\infl_{u\in\mM_\infty}J_\infty(u)$. 

We recall that, since $a_\infty>0$ holds and since $f_\infty$ satisfies the conditions (F1)--(F4), 
Problem \eqref{eqn:nls_f_inf} admits a ground-state solution $u_\infty\in E\backslash\{0\}$
(see \cite[Theorem 3.13]{W}.)
There holds $J_\infty(u_\infty)=c_\infty>0$ and $J_\infty'(u_\infty)=0$.

Before we give some properties of $\mM$, let us point out a few facts concerning the functions $f$, $f_\infty$ and their primitive
$F$, $F_\infty$.

\begin{lem}\label{lem:properties_f}
  \begin{itemize}
   \item[(i)] For every $\eps>0$ there is $C_\eps>0$ such that 
    \begin{equation}\label{eqn:estim_f_eps}
      |f(x,u)| \leq \eps |u| + C_\eps |u|^{p-1} \quad\text{ and }\quad F(x,u)\leq \eps|u|^2 + C_\eps |u|^p.
    \end{equation} 
    for all $x \in \R^N$, $u \in \R$.
   \item[(ii)] For all $x\in\R^N$ and all $u,v\in\R$, we have
    \begin{equation}\label{eqn:ineq_F_f}
     F(x,u+v)\geq F(x,u) + f(x,u)v \qquad \text{and}\qquad  F_\infty(u+v) \ge F_\infty(u)+f_\infty(u)v
    \end{equation}
   \item[(iii)] If (F2$\,'\!$) holds, then for every $\rho>0$ there exists $C_\rho\geq 0$ such that for all $0\leq u,v\leq \rho$ we have
    \begin{equation}\label{eqn:ineq_Finf_finf}
     F_\infty(u+v)-F_\infty(u)-F_\infty(v)\geq f_\infty(u)v+f_\infty(v)u - C_\rho u^{1+\frac{\nu}{2}}v^{1+\frac{\nu}{2}}.
    \end{equation}
  \end{itemize}
\end{lem}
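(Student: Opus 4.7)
Parts (i) and (ii) are standard. For (i), given $\eps > 0$, by (F2) there exists $\delta > 0$ such that $|f(x,u)| \leq \eps |u|$ whenever $|u| \leq \delta$, while for $|u| > \delta$, (F1) yields $|f(x,u)| \leq C_0(1 + |u|^{p-1}) \leq C_0(1 + \delta^{1-p}) |u|^{p-1} =: C_\eps |u|^{p-1}$; integrating from $0$ to $|u|$ then gives the bound on $F$. For (ii), I first observe that (F4) together with (F2) forces $f(x,\cdot)$ to be strictly increasing on all of $\R$: on $(0,\infty)$ the quotient $f(x,u)/u$ is strictly increasing and tends to $0$ as $u\to 0^+$, so it is positive, and $f(x,u) = u\cdot (f(x,u)/u)$ is the product of two positive, strictly increasing functions. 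The argument on $(-\infty, 0)$ is analogous, and continuity across $0$ follows from $f(x,0) = 0$. Consequently $F(x,\cdot)$ is convex, and \eqref{eqn:ineq_F_f} is just the tangent-line inequality $F(x,w) \geq F(x,u) + f(x,u)(w-u)$ evaluated at $w = u+v$. The same reasoning applies to $f_\infty$ and $F_\infty$.

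The core of the lemma is (iii). Starting from the two tangent-line estimates supplied by (ii),
\[
F_\infty(u+v) \geq F_\infty(u) + f_\infty(u)v \quad\text{and}\quad F_\infty(u+v) \geq F_\infty(v) + f_\infty(v)u,
\]
I obtain the pair of lower bounds
\[
\Phi(u,v) := F_\infty(u+v) - F_\infty(u) - F_\infty(v) - f_\infty(u)v - f_\infty(v)u \geq -[F_\infty(u) + f_\infty(u)v]
\]
and, symmetrically, $\Phi(u,v) \geq -[F_\infty(v) + f_\infty(v)u]$. Since the claim is symmetric in $u$ and $v$, I may assume $u \leq v$ and work with the first bound. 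Because $f_\infty \geq 0$ and is increasing on $[0,\infty)$, the elementary estimate $F_\infty(u) \leq u f_\infty(u)$ gives $F_\infty(u) + f_\infty(u) v \leq (u+v) f_\infty(u) \leq 2 v f_\infty(u)$.

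It remains to apply (F2$'$): the quotient $u \mapsto f_\infty(u)/u^{1+\nu}$ extends continuously to $[0,\rho]$ with value $0$ at the origin, so it is bounded there by some $C_\rho \geq 0$, yielding $f_\infty(u) \leq C_\rho u^{1+\nu}$ on $[0,\rho]$. Therefore $2 v f_\infty(u) \leq 2 C_\rho v u^{1+\nu} = 2 C_\rho u^{1+\nu/2} v^{1+\nu/2} \cdot (u/v)^{\nu/2} \leq 2 C_\rho u^{1+\nu/2} v^{1+\nu/2}$, where the final step uses $u \leq v$. Combined with the lower bound on $\Phi$, this is \eqref{eqn:ineq_Finf_finf}. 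The main obstacle here is subtle: a naive convexity estimate produces error terms of the form $u^{1+\nu} v$ or $u v^{1+\nu}$, neither of which is pointwise dominated by the factorized expression $u^{1+\nu/2} v^{1+\nu/2}$ on $[0,\rho]^2$ when one variable is much larger than the other. The resolution is to select the tangent-line inequality based at the \emph{smaller} of $u, v$, so that the leftover ratio $(u/v)^{\nu/2}$ is at most one and absorbs the asymmetry between the two variables.
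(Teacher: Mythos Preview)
Your proof is correct and follows essentially the same route as the paper's: for (iii) both arguments assume without loss of generality that one variable is the smaller, use the tangent inequality from (ii) to reduce to bounding $F_\infty(\text{small}) + f_\infty(\text{small})\cdot(\text{large})$, and then invoke (F2$'$) to control $f_\infty(\text{small})$ by $C_\rho\,(\text{small})^{1+\nu}$, after which the factor $(\text{small}/\text{large})^{\nu/2}\le 1$ yields the symmetric error $u^{1+\nu/2}v^{1+\nu/2}$. One small slip in your commentary: the bound $\Phi(u,v)\ge -[F_\infty(u)+f_\infty(u)v]$ actually comes from the tangent inequality based at the \emph{larger} point $v$, not the smaller one; this does not affect the argument, since what matters is that the residual involves $f_\infty$ of the smaller variable.
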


\begin{proof}
(i) follows easily from (F1) and (F2).\\
(ii) As a consequence of (F4), the function $u \mapsto f(x,u)$ is
increasing on $\R$ for every $x \in \R^N$, which yields   
\[F(x,u+v)-F(x,u)= \int_u^{u+v}f(x,t)\,dt \ge f(x,u)v.\]
The statement on $F_\infty$ and $f_\infty$ follows in the same way
from (F4).\\
(iii) The inequality is obviously satisfied if $u=0$ or $v=0$. Moreover, for $0<v\leq u$, we deduce from \eqref{eqn:ineq_F_f} and (F2$'$)
\begin{align*}
  &F_\infty(u+v)-F_\infty(u)-F_\infty(v)-f_\infty(u)v-f_\infty(v)u \geq -F_\infty(v)-f_\infty(v)u \\
  &= -\int_0^v \frac{f_\infty(t)}{t^{1+\nu}}\, t^{1+\nu}\, dt - \frac{f_\infty(v)}{v^{1+\nu}}uv^{1+\nu}\\
  &  \geq -\frac{\tilde{C}_\rho}{(2+\nu)}v^{2+\nu}-\tilde{C}_\rho\, u v^{1+\nu} \geq -\frac32 \tilde{C}_\rho u^{1+\frac{\nu}{2}}v^{1+\frac{\nu}{2}},
\end{align*}
where $\tilde{C}_\rho:=\sup\limits_{0<u\leq \rho} \frac{f_\infty(u)}{u^{1+\nu}}<+\infty$. Since \eqref{eqn:ineq_F_f} and \eqref{eqn:ineq_Finf_finf}
are symmetric in $u$ and $v$, the same estimate holds for $0<u\leq v$, and the proof is complete.
\end{proof}

We now study more closely the set $\mM$ and the behavior of $J$ on it.
\begin{lem}[Properties of $\mM$]\label{lem:Nehari_f}
  There holds
  \begin{itemize}
    \item[(i)] $\frac{1}{2}\int_{\R^N}f(x,u)u\, dx > \int_{\R^N}F(x,u)\, dx$ for all $u\in E\backslash\{0\}$,
     and the functional $u\mapsto \int_{\R^N} F(x,u)\, dx$ is weakly lower semicontinuous.
    \item[(ii)] For each $w\in E\backslash (E^-\oplus E^0)$ let $\hat{E}(w):=\{tw+h\, :\, t\geq 0\; h\in E^0\oplus E^-\}$.
     Then there exists a unique nontrivial critical point $\hat{\m}(w)$ of $J|_{\hat{E}(w)}$.
     Moreover, $\hat{\m}(w)$ is the unique global maximum of $J|_{\hat{E}(w)}$.
    \item[(iii)] There exists $\delta>0$ such that $\|\hat{\m}(w)^+\|\geq \delta$ for all $w\in E\backslash (E^-\oplus E^0)$, and 
     for each compact subset $\mW\subset E\backslash (E^-\oplus E^0)$ there is a constant $C_{\mW}>0$ such that 
     $\|\hat{\m}(w)\|\leq C_{\mW}$ for all $w\in\mW$.
  \end{itemize}
\end{lem}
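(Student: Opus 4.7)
For part (i), the pointwise inequality $\frac{1}{2}f(x,u)u>F(x,u)$ for $u\neq 0$ is immediate from (F4): for $u>0$, setting $g(x,s):=f(x,s)/s$, the strict monotonicity of $g$ on $(0,u)$ yields
\[F(x,u)=\int_0^u s\,g(x,s)\,ds < g(x,u)\,\frac{u^2}{2}=\frac{1}{2}f(x,u)u,\]
and an analogous computation handles $u<0$; integrating then gives the integral version. For the weak lower semicontinuity, note that (F2) and (F4) combined force $f(x,u)$ to have the sign of $u$, so $F\ge 0$. Given $u_n\rightharpoonup u$ in $E$, passing to an a.e.\ convergent subsequence (via Rellich-Kondrachov) and applying Fatou's lemma yields the claim.

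For part (ii), the key observation is that $\hat{E}(w)$ lies in the finite-dimensional space $\R w+(E^-\oplus E^0)$, on which $J$ is continuous. Using (F3) and norm equivalence in finite dimensions, one shows that $J(tw+h)\to -\infty$ as $\|tw+h\|\to\infty$ within $\hat{E}(w)$, while $J(tw^+)>0$ for small $t>0$ (since $tw^+\in\hat{E}(w)$, $\|w^+\|>0$, and Lemma~\ref{lem:properties_f}(i) gives $\int F(x,tw^+)\,dx=O(t^2)+O(t^p)$). Hence the supremum is positive and attained at a point with $t>0$, which is therefore a critical point of $J|_{\hat{E}(w)}$. The main obstacle is \emph{uniqueness}: for $u=\hat{\m}(w)$ and any competitor $v=su+h\in\hat{E}(u)=\hat{E}(w)$ with $(s,h)\neq(1,0)$, the critical-point identities $J'(u)u=0$ and $J'(u)h'=0$ for all $h'\in E^-\oplus E^0$ allow $J(v)-J(u)$ to be rewritten as an integral whose pointwise integrand is strictly negative by (F4), following the Szulkin--Weth analysis of the generalized Nehari manifold.

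For the lower bound in part (iii), I exploit the maximization property. Since $tu^+=tu-t(u^-+u^0)\in\hat{E}(u)$ for all $t\ge 0$, one has $J(u)\ge\max_{t\ge 0}J(tu^+)$. Using $F(x,v)\le\eps v^2+C_\eps|v|^p$ and Sobolev embeddings with $\eps$ small enough gives $J(tu^+)\ge\frac{t^2}{4}\|u^+\|^2-K\,t^p\|u^+\|^p$ for some $K>0$; the substitution $s=t\|u^+\|$ shows that the maximum of the right-hand side in $t\ge 0$ equals a positive constant $\delta_0$ independent of $\|u^+\|$. Hence $J(u)\ge\delta_0$, and since $F\ge 0$ yields $J(u)\le\frac{1}{2}\|u^+\|^2$, we obtain the uniform bound $\|u^+\|\ge\sqrt{2\delta_0}$.

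For the upper bound on a compact set $\mW\subset E\setminus(E^-\oplus E^0)$, I argue by contradiction. Suppose $\|\hat{\m}(w_n)\|\to\infty$ with $w_n\to w$ in $\mW$; write $u_n=\hat{\m}(w_n)=t_n w_n+h_n$ and $\tilde u_n:=u_n/\|u_n\|=\alpha_n w_n+k_n$ with $\alpha_n=t_n/\|u_n\|\ge 0$ and $k_n=h_n/\|u_n\|\in E^-\oplus E^0$. The estimates $\|\tilde u_n^+\|=\alpha_n\|w_n^+\|\le 1$ and $\|k_n\|\le 1+\alpha_n\|w_n\|$, together with $w_n\to w$, $\|w^+\|>0$, and the finite-dimensionality of $E^-\oplus E^0$, imply that $(\alpha_n,k_n)$ is bounded; along a subsequence, $\tilde u_n\to\tilde u$ strongly in $E$ with $\|\tilde u\|=1$. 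Since $|u_n|\to\infty$ a.e.\ on $\{\tilde u\neq 0\}$, (F3) and Fatou's lemma give $\|u_n\|^{-2}\int F(x,u_n)\,dx\to\infty$, forcing $J(u_n)/\|u_n\|^2\to-\infty$; this contradicts the bound $J(u_n)\ge\delta_0>0$ just established and completes the proof.
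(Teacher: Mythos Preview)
Your proof is correct and follows essentially the same route as the paper's, which largely defers to \cite{SW2009} and \cite{STRUWE}; you simply make the arguments self-contained. In particular, your Fatou argument for the weak lower semicontinuity in (i), the finite-dimensional compactness/coercivity argument for (ii), and the contradiction argument for the upper bound in (iii) all match the paper's reasoning, while your explicit derivation of the lower bound $\|\hat{\m}(w)^+\|\geq\delta$ via $\max_{t\ge 0}J(tu^+)\ge\delta_0$ is exactly what underlies the reference \cite[Lemma~2.4]{SW2009} that the paper invokes.
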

\begin{proof}
 \begin{itemize}
  \item[(i)] The first assertion follows from \cite[Lemma 2.1]{SW2009}, and  \cite[Theorem 1.6]{STRUWE} gives the second one.
  \item[(ii)] Similar to the proof of \cite[Proposition 2.3]{SW2009}, we see that for all $u\in\mM$, there holds $J(u)>J(v)$ for
  every $v\in\hat{E}(u)\backslash\{u\}$.
  
  Let now $w\in E\backslash (E^-\oplus E^0)$. It is enough to prove that $\mM\cap \hat{E}(w)\neq\varnothing$ holds. With $w^+\neq 0$,
  we set $v:=\frac{w^+}{\|w^+\|}$ and claim that for $t\geq 0$, $h\in E^-\oplus E^0$, we have $J(tv+h)\leq 0$ if $\|tv+h\|$ is large.
  Indeed, suppose, by contradiction that $\|t_kv+h_k\|\to \infty$ as $k\to\infty$ and $J(t_kv+h_k)\geq 0$ for all $k$.
  Setting $v_k:=\frac{t_kv+h_k}{\|t_kv+h_k\|}=s_kv+z_k$ for all $k$, we first note that $(s_k)_k$, $(z_k^-)_k$ and $(z_k^0)_k$ are bounded
  sequences, since $1=\|v_k\|^2 = s_k^2 + \|z_k^-\|^2 + \|z_k^0\|^2$ holds for all $k$. Thus, up to a subsequence, we can assume 
  $v_k\to s v + z$ for some $s\geq 0$ and $z\in E^-\oplus E^0$, since $\dim (E^-\oplus E^0)<+\infty$. In particular, $\|s v+z\|=1\neq 0$, 
  and therefore, $|t_kv(x)+h_k(x)|\to \infty$ as $k\to\infty$, for a.e. 
  $x\in\R^N$ such that $sv(x)+z(x)\neq 0$. Condition (F3) together with Fatou's Lemma now gives
  \[ \frac12 s_k^2 -\frac12 \|z_k^-\|^2 -\frac{J(t_kv+h_k)}{\|t_kv+h_k\|^2} = \int_{\R^N}\frac{F(x,t_kv+h_k)}{(t_kv+h_k)^2}v_k^2\, dx \to \infty,\]
  which contradicts the assumption $J(t_kv+h_k)\geq 0$ for all $k$ and thus proves the claim.
  
  Next, we notice that \eqref{eqn:estim_f_eps} implies $J(tv)>0$ for $t>0$ small. Consequently $0<\supl_{u\in\hat{E}(w)}J(u)<+\infty$,
  and we conclude as in the proof of \cite[Lemma 2.6]{SW2009}.
  \item[(iii)] A similar proof as \cite[Lemma 2.4]{SW2009} gives the first assertion. For the second one, we simply note that $\hat{\m}(w)$ has
  the form $tw^+ + h$ with $t\geq 0$ and $h\in E^-\oplus E^0$. Hence, the same argument as in the proof of (ii), together with the fact that
  $J(\hat{\m}(w))=\int_{\R^N}\frac12 f(x,\hat{\m}(w))\hat{\m}(w)-F(x,\hat{\m}(w))\, dx>0$, implies that $\hat{\m}(w)$ is uniformly bounded for
  $w\in\mW$, since this set is compact.
  \end{itemize}
\end{proof}

\begin{lem}[Coercivity]\label{lem:coercivity}
  Every sequence $(u_k)_k\subset\mM$ with $\llim_{k\to\infty}\|u_k\|=\infty$ satisfies 
  $\llim_{k\to\infty}J(u_k)=\infty$. In particular, all Palais-Smale sequences for $J$ in $\mM$ are bounded.
\end{lem}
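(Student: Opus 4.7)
The plan is to argue by contradiction: suppose $(u_k)\subset\mM$ with $\|u_k\|\to\infty$ but $J(u_k)$ remains bounded above. Normalize $v_k:=u_k/\|u_k\|$ and pass to a subsequence along which $v_k\rightharpoonup v = v^+ + v^0 + v^-$ weakly in $E$ and pointwise a.e.; since $E^0 \oplus E^-$ is finite-dimensional, the components $v_k^0$ and $v_k^-$ converge strongly. First note that $J(u_k)>0$ on $\mM$: from $J'(u_k)u_k=0$ one gets $2J(u_k)=\int_{\R^N}[f(x,u_k)u_k-2F(x,u_k)]\,dx$, which is strictly positive by Lemma~\ref{lem:Nehari_f}(i). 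The identity
\[
\frac{2J(u_k)}{\|u_k\|^2} = \|v_k^+\|^2 - \|v_k^-\|^2 - 2\int_{\R^N}\frac{F(x,u_k)}{\|u_k\|^2}\,dx
\]
thus reduces the task to the following dichotomy in each subcase: either $\int F(x,u_k)/\|u_k\|^2\,dx\to+\infty$ (contradicting $J(u_k)>0$) or $J(u_k)\to\infty$ directly (contradicting the boundedness assumption).

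The first technical step is a uniform lower bound $F(x,u)\geq -Ku^2$ valid for all $(x,u)\in\R^N\times\R$: the estimate holds near $u=0$ by (F2), on bounded ranges of $u$ by (F1), and for large $|u|$ by (F3), which forces $F(x,u)\geq 0$ above a uniform threshold. With this bound, the integrand of $\int_{\R^N}[F(x,u_k)/u_k^2+K]\,v_k^2\,dx$ is nonnegative and Fatou's lemma applies. If $v\neq 0$, then on the positive-measure set $\{v\neq 0\}$ one has $|u_k|\to\infty$ a.e. and $F(x,u_k)/u_k^2\to\infty$ by (F3), while $v_k^2\to v^2>0$; Fatou together with the uniform bound $\|v_k\|_{L^2}\leq C$ yields the divergence of $\int F(x,u_k)/\|u_k\|^2\,dx$ and hence the contradiction.

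It remains to treat $v=0$. Then $\|v_k^0\|,\|v_k^-\|\to 0$, so $\|v_k^+\|\to 1$. Set $\phi_k:=u_k^+/\|u_k^+\|$. Since $s\phi_k\in\hat{E}(u_k)$ for every $s\geq 0$, Lemma~\ref{lem:Nehari_f}(ii) gives
\[
J(u_k)\;\geq\;J(s\phi_k)\;=\;\frac{s^2}{2}-\int_{\R^N}F(x,s\phi_k)\,dx.
\]
Apply Lions' concentration-compactness lemma to $\phi_k$. In the vanishing subcase, $\phi_k\to 0$ in $L^q$ for every $2<q<2^\ast$, and from $|F(x,u)|\leq\eps u^2 + C_\eps|u|^p$ (Lemma~\ref{lem:properties_f}(i)) with $\|\phi_k\|_{L^2}$ bounded one finds $\int F(x,s\phi_k)\,dx\to 0$; hence $\liminf J(u_k)\geq s^2/2$ for every $s>0$, giving $J(u_k)\to\infty$ and contradicting the boundedness.

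The main obstacle is the non-vanishing subcase: there exist $y_k\in\R^N$ and $\delta>0$ with $\int_{B_1(y_k)}\phi_k^2\,dx\geq\delta$. Set $\tilde\phi_k(y):=\phi_k(y+y_k)$ and extract a nonzero weak $H^1$-limit $\tilde\phi$. The subtlety is that the spectral decomposition is not translation invariant, so $\tilde u_k:=u_k(\cdot+y_k)$ does not split spectrally; however, since translation is an $H^1$-isometry and $\|v_k^0\|,\|v_k^-\|\to 0$, the shifted pieces $v_k^0(\cdot+y_k)$, $v_k^-(\cdot+y_k)$ still tend to $0$ in $H^1$. Consequently
\[
\tilde u_k/\|u_k\| \;=\; \|v_k^+\|\,\tilde\phi_k + v_k^-(\cdot+y_k) + v_k^0(\cdot+y_k) \;\rightharpoonup\; \tilde\phi
\]
weakly in $H^1$, and a.e. after extraction, so $|\tilde u_k|\to\infty$ a.e. on $\{\tilde\phi\neq 0\}$. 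A change of variables $x\mapsto x+y_k$ in $\int F(x,u_k)\,dx$ and the Fatou argument of the second paragraph applied to $F(y+y_k,\tilde u_k)/\tilde u_k^2$ (the lower bound $-K$ being independent of $x$) produce the divergence $\int F(x,u_k)/\|u_k\|^2\,dx\to\infty$ and thereby contradict $J(u_k)>0$ on $\mM$. The final assertion follows immediately: any Palais-Smale sequence for $J$ in $\mM$ has uniformly bounded energy by definition, so the coercivity just proved prevents $\|u_k\|\to\infty$.
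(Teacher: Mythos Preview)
Your proof is correct and follows essentially the same strategy as the paper: rule out vanishing of $v_k^+$ in $L^p$ via the maximality $J(u_k)\geq J(s\phi_k)$ coming from Lemma~\ref{lem:Nehari_f}(ii), and handle non-vanishing by shifting with Lions' lemma and applying Fatou together with (F3). The organization differs slightly---you first split on whether the weak limit $v$ of $v_k$ is zero, whereas the paper first shows $\|v_k^+\|\not\to 0$ and then distinguishes bounded versus unbounded shifts $(y_k)$---but the substance is the same.
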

\begin{proof}
By contradiction, let $(u_k)_k\subset\mM$ satisfy $d:=\supl_{k\in\N}J(u_k)<\infty$ and \\
$\llim_{k\to\infty}\|u_k\|=\infty$. Let $v_k:= \frac{u_k}{\|u_k\|}$
for $k \in \N$. 
We first claim that 
\begin{equation}\label{eq:1}
  \|v_k^+\| \not \to 0 \qquad \text{as $k \to \infty$.}  
\end{equation}
Indeed, suppose that $\|v_k^+\| \to 0$. Then, since 
$0<J(u_k)\le \frac12 (\|u_k^+\|^2-\|u_k^-\|^2)$, we have $\|u_k^-\| \le \|u_k^+\|$
for all $k$ and therefore 
\[\|v_k^-\| \le \|v_k^+\| \to 0 \qquad \text{as $k \to \infty$.}\]
As a consequence, since $E^0$ is finite-dimensional, we may pass to a
subsequence such that $v_k \to v$, where $v \in E^0$ satisfies $\|v\|=1$. 
Since $|u_k(x)|\to\infty$ for a.e. $x \in \R^N$ with $v(x) \ne 0$, it follows from $(F_3)$ and Fatou's lemma that
\begin{equation*}
  \int_{\R^N}\frac{F(x,u_k)}{u_k^2}v_k^2\,dx \to \infty \qquad \text{as $k \to \infty$,}
\end{equation*}
and therefore 
\[0 \le \frac{J(u_k)}{\|u_k\|^2}=\frac12 (\|v_k^+\|^2-\|v_k^-\|^2) - 
  \int_{\R^N} \frac{F(x,u_k)}{u_k^2}v_k^2\,dx \to -\infty\]
as $k \to \infty$, a contradiction. Hence \eqref{eq:1} holds, and
therefore we may pass to a
subsequence such that 
\begin{equation}\label{eq:3}
  \sigma:= \inf \limits_{k \in \N}\|v_k^+\|>0.  
\end{equation}
Next we claim that 
\begin{equation}\label{eq:2}
  v_k^+ \not \to 0 \in L^p(\R^N),  
\end{equation}
where $p >2$ is as in (F1). Indeed, suppose by contradiction that $v_k^+ \to 0$ in
$L^p(\R^N)$, and let $s>0$. Then \eqref{eqn:estim_f_eps} yields 
$\llim_{k \to \infty} \int_{\R^N}F(x,s v_k^+)\,dx=0$. Moreover, since $s v_k^+ \in \hat E(u_k)$,
Lemma~\ref{lem:Nehari_f}(ii) implies that 
\begin{align*}
d \ge J(u_k) \ge J(sv_k^+) &=  \frac12\|sv_k^+\|^2- \int_{\R^N}F(x,s v_k^+)\,dx \\
 & \ge \frac{(s\sigma)^2}{2} -  
  \int_{\R^N}F(x,s v_k^+)\,dx \to \frac{(s\sigma)^2}{2}
\end{align*}
for $k \to \infty$. Since $s>0$ was arbitrary, we get a
contradiction.

By \eqref{eq:2} and Lions' Lemma \cite[Lemma I.1]{LIONS84_2}, there exists a sequence $(y_k)_k$ in $\R^N$ such
that, after passing to a subsequence, $\inf \limits_{k \in \N}\int_{B_1(0)}(y_k\ast v_k^+)^2\, dx>0$ 
and therefore, passing again to a subsequence,
$y_k * v_k^+ \rightharpoonup v$ as $k \to \infty$, where $v \in H^1(\R^N)\setminus \{0\}$. 

Since $\text{dim}(E^-\oplus E^0)<+\infty$, we can find $z\in E^-\oplus E^0$ such that, up to a subsequence,
$ v_k^- + v_k^0 \to z$ holds, as $k\to\infty$. If the sequence $(y_k)_k$ is bounded, we even have 
$v_k^+ \rightharpoonup w$ for some $w\in E^+\backslash\{0\}$,
up to a subsequence, and therefore we obtain $v_k\rightharpoonup w+z\neq 0$ as $k\to\infty$.
Passing again to a subsequence, we may then also assume
\begin{equation}\label{eq:4}
  v_k \to  w+z \qquad \text{a.e. in $\R^N$.} 
\end{equation}
On the other hand, if $(y_k)_k$ is unbounded, we may pass to a subsequence satisfying $|y_k| \to \infty$ and, 
consequently, $y_k * (v_k^-+v_k^0) \rightharpoonup 0$ as $k \to \infty$. This gives
$y_k\ast v_k\rightharpoonup v\neq 0$, and we may pass to a subsequence
satisfying 
\begin{equation} \label{eq:5}
  y_k\ast v_k \to v \qquad \text{a.e. in $\R^N$.} 
\end{equation}
Now, we remark that 
\[\int_{\R^N}\frac{F(x,u_k)}{u_k^2}v_k^2\,dx= \int_{\R^N}\frac{F(x-y_k ,y_k * u_k)}{(y_k *u_k)^2}(y_k* v_k)^2\,dx.\]
Moreover, \eqref{eq:4} implies $|u_k|\to\infty$ pointwise
a.e. where $w+z \not = 0$, while \eqref{eq:5} implies 
$|y_k * u_k| \to \infty$ a.e. where $v \not=0$. Hence (F3) and Fatou's Lemma
again imply that 
\begin{equation*}
  \int_{\R^N}\frac{F(x,u_k)}{u_k^2}v_k^2\,dx = \int_{\R^N}\frac{F(x-y_k ,y_k * u_k)}{(y_k *u_k)^2}(y_k* v_k)^2\,dx \to \infty 
  \qquad \text{as $k \to \infty$,}
\end{equation*}
and therefore 
\[0 \le \frac{J(u_k)}{\|u_k\|^2}=\frac12 (\|v_k^+\|^2-\|v_k^-\|^2) - \int_{\R^N} \frac{F(x,u_k)}{u_k^2}v_k^2\,dx \to -\infty.\]
This contradiction finishes the proof.
\end{proof}

From Propositions 4.1, 4.2 and Corollary 4.3  in \cite{W}, it follows that
\begin{lem}\label{lem:Nehari2_f}
  \begin{itemize}
    \item[(a)] The map $\hat{\m}$: $E\backslash (E^-\oplus E^0)$ $\to$ $\mM$ given by Lemma \ref{lem:Nehari_f} (ii) is continuous 
     and its restriction $\m$ to $\mS^+:=\{u\in E^+\, :\, \|u\|=1\}$ is a homeomorphism with inverse given by 
     $\m^{-1}(v)=\frac{v^+}{\|v^+\|}$, $v\in\mM$.
    \item[(b)] The functional $\hat{\Psi}$: $E^+\backslash\{0\}$ $\to$ $\R$ defined by $\hat{\Psi}(w)=J(\hat{\m}(w))$ is of class $\mC^1$. 
     Furthermore, $\Psi:=\hat\Psi|_{\mS^+}$ is also $\mC^1$ with $\Psi'(w)z=\|\m(w)^+\| J'(\m(w))z$ for every 
     $z\in T_w\mS^+=\{v\in E^+\, :\, \ps{w}{v}=0\}$.
    \item[(c)] $(w_k)_k\subset\mS^+$ is a Palais-Smale sequence for $\Psi$ if and only if $(\m(w_k))_k\subset\mM$ is a Palais-Smale 
     sequence for $J$.
    \item[(d)] $\infl_{\mS^+}\Psi = \infl_{\mM}J =c$.
  \end{itemize}
\end{lem}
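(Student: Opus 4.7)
The statement collects the outputs of the abstract generalized Nehari manifold framework developed in \cite{W}, applied to our functional $J$; the plan is to verify that Lemmas \ref{lem:properties_f}, \ref{lem:Nehari_f}, and \ref{lem:coercivity} supply exactly the hypotheses of that framework, and to sketch the derivation step by step. For (a), I would show continuity of $\hat{\m}$ by a compactness argument: given $w_k\to w$ in $E\setminus(E^-\oplus E^0)$, write $\hat{\m}(w_k)=t_k w_k^+ + h_k$, use the two-sided bound in Lemma \ref{lem:Nehari_f}(iii) together with the finite-dimensionality of $E^-\oplus E^0$ to extract a subsequential limit $u=tw^+ + h\in \hat{E}(w)$ with $\|u^+\|\geq\delta>0$, and then invoke continuity of $J'$ and the uniqueness in Lemma \ref{lem:Nehari_f}(ii) to conclude $u=\hat{\m}(w)$. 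The scale invariance $\hat{\m}(\lambda w)=\hat{\m}(w)$ for $\lambda>0$ shows that $\hat{\m}$ factors through $\mS^+$, and since every $v\in\mM$ is itself the unique critical point of $J|_{\hat{E}(v)}=J|_{\hat{E}(v^+)}$, one has $v=\m(v^+/\|v^+\|)$, which gives the inverse formula and hence the homeomorphism property.

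For (b) I would fix $w\in E^+\setminus\{0\}$ and, for $z\in E^+$ and small $t$, write $\hat{\m}(w+tz)=s(t)(w+tz)+h(t)$ with $s(t)>0$ and $h(t)\in E^-\oplus E^0$; continuity of $\hat{\m}$ from part (a) already implies $s$ and $h$ are continuous at $0$. By the maximality of $\hat{\m}(w)$ on $\hat{E}(w)$ and of $\hat{\m}(w+tz)$ on $\hat{E}(w+tz)$, one sandwiches $\hat{\Psi}(w+tz)-\hat{\Psi}(w)$ between $J(s(0)(w+tz)+h(0))-J(s(0)w+h(0))$ and $J(s(t)(w+tz)+h(t))-J(s(t)w+h(t))$; applying the mean value theorem to both ends, dividing by $t$, and letting $t\to 0$ yields $\hat{\Psi}'(w)z=s(0)\,J'(\hat{\m}(w))z$, with the right-hand side continuous in $w$ by continuity of $\hat{\m}$ and $J'$, hence $\hat{\Psi}\in \mC^1$. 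Restricting to $w\in\mS^+$, where $s(0)=\|\m(w)^+\|$, gives the claimed formula for $\Psi'(w)$.

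For (c), the key identity $\|\Psi'(w)\|_{(T_w\mS^+)^*}=\|\m(w)^+\|\,\|J'(\m(w))\|_{E^*}$ follows from (b) once one observes that $J'(\m(w))$ annihilates $\R w\oplus(E^-\oplus E^0)$ (directly from the definition of $\mM$), so that the dual norm of $J'(\m(w))$ on $E$ coincides with its dual norm on the complement $T_w\mS^+$. Combined with the uniform lower bound $\|\m(w)^+\|\geq\delta$ from Lemma \ref{lem:Nehari_f}(iii) and the upper bound provided by the coercivity of Lemma \ref{lem:coercivity} along any PS-sequence for $J$ in $\mM$, this yields the two-way PS equivalence. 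Statement (d) is immediate since $\m:\mS^+\to\mM$ is a bijection with $\Psi=J\circ\m$. The main technical subtlety lies in part (b), where the squeeze argument must be carried out without appealing to higher regularity of $f$ (since the implicit function theorem would require $J\in\mC^2$, which is not available here); apart from this, the whole lemma amounts to transferring the abstract machinery of \cite{W} to our setting, using (F4) and Lemma \ref{lem:Nehari_f}(ii) as the source of strict concavity of $J$ along each $\hat{E}(w)$.
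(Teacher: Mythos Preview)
Your proposal is correct and takes essentially the same approach as the paper, which simply invokes Propositions~4.1, 4.2 and Corollary~4.3 of \cite{W} without further argument. Your sketch faithfully reconstructs the arguments behind those results, and correctly identifies the squeeze argument in (b) as the place where one must avoid the implicit function theorem (unavailable since $J$ need not be $\mC^2$); the paper's proof is literally one line of citation, so your version is strictly more detailed but not different in substance.
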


\begin{prop}\label{prop:c<cinf_f}
  Suppose (A1) and (F1)--(F4) are satisfied. 
  If in addition $c<c_\infty$ holds, then \eqref{eqn:nls_f} has a nontrivial ground-state solution.
\end{prop}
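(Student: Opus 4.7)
The plan is to construct a bounded Palais--Smale sequence for $J$ on $\mM$ at level $c$, extract a weak limit $u$, use the strict inequality $c<c_\infty$ to rule out $u=0$, and finally verify that $J(u)=c$.

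\medskip

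For the first step, Lemma~\ref{lem:Nehari2_f}(d) gives $\infl_{\mS^+}\Psi=c$, and $\Psi\in\mC^1(\mS^+,\R)$ by Lemma~\ref{lem:Nehari2_f}(b). Ekeland's variational principle applied to $\Psi$ on the complete $\mC^1$-manifold $\mS^+$ produces $(w_k)\subset\mS^+$ with $\Psi(w_k)\to c$ and $\Psi'(w_k)\to 0$. Setting $u_k:=\m(w_k)\in\mM$, Lemma~\ref{lem:Nehari2_f}(c) yields $J(u_k)\to c$ and $J'(u_k)\to 0$, while Lemma~\ref{lem:coercivity} supplies a uniform bound $\|u_k\|\leq C$. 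Passing to a subsequence, $u_k\rightharpoonup u$ weakly in $E$, $u_k\to u$ in $L^p_{\mathrm{loc}}(\R^N)$ and a.e. A routine argument based on the growth estimate of Lemma~\ref{lem:properties_f}(i) then shows that $J'(u)=0$.

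\medskip

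The key step is to prove $u\neq 0$. Assume by contradiction that $u=0$, and invoke the nonlinear splitting property proved in the appendix. Two alternatives arise: either (i) the splitting yields a sequence $(y_k)\subset\R^N$ with $|y_k|\to\infty$ and a nontrivial $\tilde u\in H^1(\R^N)$ satisfying $J_\infty'(\tilde u)=0$; or (ii) $u_k\to 0$ in $L^q(\R^N)$ for every $q\in(2,2^*)$. In case (i), after translating by $y_k$ and using (A1) to replace $f(\cdot,\,\cdot)$ and $F(\cdot,\,\cdot)$ by their asymptotic counterparts, Fatou's lemma applied to the nonnegative integrand $\tfrac12 f(x,s)s-F(x,s)$ (an easy consequence of (F4)) gives
\[ c=\llim_{k\to\infty}\Bigl(J(u_k)-\tfrac12 J'(u_k)u_k\Bigr)\geq J_\infty(\tilde u)-\tfrac12 J_\infty'(\tilde u)\tilde u=J_\infty(\tilde u)\geq c_\infty,\]
contradicting $c<c_\infty$. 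In case (ii), (F1)--(F2) imply $\int_{\R^N} F(x,u_k)\,dx\to 0$ and $\int_{\R^N} f(x,u_k)u_k\,dx\to 0$, so $J'(u_k)u_k=o(1)$ gives $\|u_k^+\|^2-\|u_k^-\|^2\to 0$; since $\dim(E^-\oplus E^0)<\infty$ and $u_k\rightharpoonup 0$, the projections $u_k^-+u_k^0\to 0$ strongly in $E$, hence $\|u_k^+\|\to 0$, contradicting the lower bound $\|u_k^+\|\geq\delta>0$ of Lemma~\ref{lem:Nehari_f}(iii). Therefore $u\neq 0$.

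\medskip

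With $u\neq 0$ and $J'(u)=0$ one verifies directly that $u\in\mM$, so $J(u)\geq c$. For the reverse inequality, the same Fatou argument (now without translation, using $u_k\rightharpoonup u$ a.e. and the nonnegativity of $\tfrac12 f(x,s)s-F(x,s)$) together with $J'(u_k)u_k=o(1)$ yields
\[ c=\llim_{k\to\infty}\int_{\R^N}\bigl[\tfrac12 f(x,u_k)u_k-F(x,u_k)\bigr]\,dx\geq \int_{\R^N}\bigl[\tfrac12 f(x,u)u-F(x,u)\bigr]\,dx=J(u).\]
Thus $J(u)=c$ and $u$ is a nontrivial ground-state solution of \eqref{eqn:nls_f}. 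The main obstacle is the nontriviality step: without the strict inequality $c<c_\infty$, a concentrating subsequence along a diverging translation could absorb the energy at the asymptotic level $c_\infty$ without producing any point of $\mM$ itself. The appendix splitting result -- which, crucially, does not require Lipschitz continuity of $f$ -- is precisely what rules out this scenario.
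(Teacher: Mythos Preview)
Your argument is correct, but it follows a different route from the paper in the decisive step of excluding $u=0$. The paper does not produce a critical point of $J_\infty$; instead it exploits the maximization property of the generalized Nehari manifold directly. After Lions' lemma yields $y_k*u_k\rightharpoonup w\neq 0$ with $|y_k|\to\infty$, the paper picks $t_k>0$ so that $t_k(y_k*u_k)\in\mM_\infty$ and uses Lemma~\ref{lem:Nehari_f}(ii) to write $J(u_k)\geq J(t_ku_k)$; since $t_ku_k\rightharpoonup 0$, the difference $J(t_ku_k)-J_\infty(t_k(y_k*u_k))$ tends to $0$ by (A1) and dominated convergence, whence $c\geq c_\infty$ without ever checking that $w$ solves the limit equation. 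Your approach instead requires $J_\infty'(\tilde u)=0$, and this is not an immediate consequence of Proposition~\ref{prop:decomposition}: the appendix result only provides the integral splitting, the dichotomy itself comes from Lions' lemma, and passing from $J'(u_k)\to 0$ to $J_\infty'(\tilde u)=0$ needs the whole chain of estimates carried out in Step~1 of the proof of Lemma~\ref{lem:Benci-Cerami} (which, to be fair, only uses (A1) and (F1)--(F4)). So your route works, but it is heavier than the paper's, and the phrase ``the splitting yields \ldots\ $\tilde u$ satisfying $J_\infty'(\tilde u)=0$'' should be expanded into an explicit argument or a pointer to that lemma. Your handling of the vanishing alternative (via the lower bound $\|u_k^+\|\geq\delta$ rather than $c>0$) and of the final inequality $J(u)\leq c$ (via Fatou on the nonnegative integrand $\tfrac12 f(x,s)s-F(x,s)$) are perfectly valid variants of what the paper does.
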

\begin{proof}
Let $(v_k)_k\subset \mS^+$ be a minimizing sequence for $\Psi$. 
By Ekeland's variational principle \cite[Theorem 8.5]{WILLEM}, we can find a sequence $(w_k)_k\subset\mS^+$ such that $\|w_k-v_k\|\to 0$, 
$\Psi(w_k)\to \infl_{\mS^+}\Psi=c$ and $\|\Psi'(w_k)\|_\ast\to 0$ as $k\to\infty$. Setting $u_k:=\m(w_k)$ for all $k$, we obtain that 
$(u_k)_k\subset\mM$ is a Palais-Smale sequence for $J$ at level $c$. By Lemma \ref{lem:coercivity}, $(u_k)_k$ is a bounded sequence.
Thus, up to a subsequence, we may assume that $u_k\rightharpoonup u$, weakly in $E$, for some $u\in E$, and the weak sequential
continuity of $J'$ gives $J'(u)=0$. In particular, if $u\neq 0$ then $u\in\mM$ holds, and since $J$ is lower semicontinuous on $\mM$
we obtain
\[ c\leq J(u)\leq \liminf_{k\to\infty} J(u_k) =c.\]
On the other hand, if $u=0$ holds, then we can find $(y_k)_k\subset\R^N$ and $\delta>0$ such that
\begin{equation}\label{eqn:liminf_delta_f}
  \liminf_{k\to\infty}\int_{B_1(0)} (y_k\ast u_k)^2\, dx \geq \delta>0.
\end{equation}
Indeed, if this were false, the concentration-compactness Lemma \cite[Lemma I.1]{LIONS84_2}
would imply $\|u_k\|_{L^p}\to 0$ as $k\to\infty$ and, since 
\[c=\llim_{k\to\infty} J(u_k)=\llim_{k\to\infty}\int_{\R^N}\frac12 f(x,u_k)u_k-F(x,u_k)\, dx 
  \leq \eps \supl_{k\in\N}\|u_k\|_{L^2}^2 + C_\eps \llim_{k\to\infty}\|u_k\|_{L^p}^p,\]
holds for all $\eps>0$, this would contradict the fact that $c>0$.

Now, we also remark that $(y_k)_k$ must be unbounded, since we are assuming $u_k\rightharpoonup 0$.
Hence, passing to a subsequence if necessary, we may suppose $|y_k|\to\infty$ and $y_k\ast u_k\rightharpoonup w$ as $k\to\infty$. 
The compact embedding $H^1(B_1(0))\hookrightarrow L^2(B_1(0))$ then implies $w\neq 0$.
For every $t_k>0$, we have by Lemma \ref{lem:Nehari_f} (ii)
\begin{align*}
  J(u_k)\geq J(t_ku_k) &= J_\infty(t_k(y_k\ast u_k)) +\frac{t_k^2}{2}\int_{\R^N}(a(x+y_k)-a_\infty)(y_k\ast u_k)^2\, dx \\
  &\quad +\int_{\R^N}F(x+y_k,t_k(y_k\ast u_k))-F_\infty(t_k(y_k\ast u_k))\, dx.
\end{align*}
Choosing $t_k>0$ such that $t_k(y_k\ast u_k)\in\mM_\infty$ holds, it follows that $(t_k)_k$ is a bounded sequence, since 
$y_k\ast u_k\rightharpoonup w\neq 0$ (compare with \cite[Proposition 2.7]{SW2009}.)
Since $a(x+y_k)\to a_\infty$ and $y_k\ast u_k\rightharpoonup w$ as $k\to\infty$,
the dominated convergence theorem gives $\int_{\R^N}(a(x+y_k)-a_\infty)(y_k\ast u_k)^2\, dx\to 0$, as $k\to\infty$.
Moreover since $t_ku_k\rightharpoonup 0$ as $k\to\infty$, (A1) and (F2) imply $\int_{\R^N}F(x,t_k u_k)-F_\infty(t_k u_k)\, dx\to 0$, 
as $k\to\infty$.
Hence, we conclude that
\[c=\lim_{k\to\infty} J(u_k)\geq \limsup_{k\to\infty} J_\infty(t_k (y_k\ast u_k))\geq c_\infty\]
holds, which contradicts our assumption $c<c_\infty$ and gives the desired conclusion.
\end{proof}

\section{The existence of a nontrivial solution}
We now consider the case where $0\notin\sigma(-\Delta+a)$ holds, and wish to prove the existence
of a solution to \eqref{eqn:nls_f} under the conditions of Theorem \ref{thm:exist}. We shall assume
throughout this section that $N\geq 2$, (A1), (F1), (F3)--(F5) and (F2$'$) are satisfied for some $\nu>0$. In addition,
the ground-state solution of \eqref{eqn:nls_f_inf} will be required to be unique (up to translations).

The proof of Theorem~\ref{thm:exist} will rely on a topological degree
argument applied to a barycenter type map. In order to set up a
corresponding minimax principle which avoids noncompactness of
Palais-Smale sequences, we first need some asymptotic
estimates.

\subsection{Asymptotic estimates}
The properties (A1), (F1)--(F4) and the oddness of $f_\infty$ ensure the existence of a positive ground-state solution 
to the limit problem \eqref{eqn:nls_f_inf}. More precisely, according to \cite[Th\'eor\` eme 1]{BeGaKa83},  
\cite[Theorem 1]{Berest_Lions83} and \cite[Theorem 2]{GNN}, there exists a ground-state solution $u_\infty\in\mC^2(\R^N)$ of 
\eqref{eqn:nls_f_inf}, positive, radially symmetric, radially
decreasing and satisfying the following exponential decay property:
\begin{equation}\label{eqn:decay_u_infty}
  \llim_{|x|\to\infty}u_\infty(x)|x|^{\frac{N-1}{2}} e^{\sqrt{a_\infty}|x|} \text{ exists and is positive}
\end{equation}
We recall a result of \cite{BaLi90} which we shall use repeatedly in the sequel.
\begin{prop}(\cite[Proposition 1.2]{BaLi90})\label{prop:1.2_BL}
  Let $\varphi\in\mC(\R^N)\cap L^\infty(\R^N)$, $\psi\in\mC(\R^N)$ radially symmetric, satisfy for some
  $\sigma \geq 0, \beta\geq 0$, $\gamma\in\R$
  \begin{align*}
   &\varphi(x)|x|^\beta e^{\sigma|x|} \lra\gamma \quad\text{ as }\quad|x|\to\infty \\
   \text{and } & \int_{\R^N}|\psi(x)|(1+|x|^\beta)e^{\sigma|x|}\, dx <\infty.
  \end{align*}
  Then, as $|y|\to\infty$, there holds
  \[\left(\int_{\R^N}(y\ast\varphi)\psi\, dx\right) |y|^\beta e^{\sigma|y|}  \lra \gamma\int_{\R^N}\psi(x)\exp(-\sigma x_1)\, dx.\]
\end{prop}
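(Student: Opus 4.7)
The plan is to bring the factor $|y|^\beta e^{\sigma|y|}$ inside the integral and apply dominated convergence, with the pointwise limit identified via Taylor expansion of $|x-y|$ around $|y|$. Setting $r=|y|$ and $\omega=y/|y|$, one has the standard expansion $|x-y|=r-x\cdot\omega+O(r^{-1})$ as $r\to\infty$ with $x$ fixed. Combined with the asymptotic $\varphi(u)|u|^\beta e^{\sigma|u|}\to\gamma$, this gives the pointwise limit
\[
r^\beta e^{\sigma r}\varphi(x-y)=(\gamma+o(1))\Bigl(\tfrac{r}{|x-y|}\Bigr)^{\!\beta} e^{\sigma(r-|x-y|)}\longrightarrow\gamma\, e^{\sigma x\cdot\omega}
\]
for each fixed $x$. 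Along any sequence $y_n$ with $|y_n|\to\infty$, compactness of $S^{N-1}$ lets me extract a subsequence with $\omega_n\to\omega_\infty$, so the integrand converges pointwise to $\gamma\, e^{\sigma x\cdot\omega_\infty}\psi(x)$.

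For the dominating function, I will use that the hypotheses on $\varphi$ (continuity, boundedness, and the exponential decay at infinity) furnish a constant $M>0$ with $|\varphi(u)|\le M(1+|u|)^{-\beta}e^{-\sigma|u|}$ for every $u\in\R^N$. The triangle inequality gives both $r-|x-y|\le|x|$ and $1+r\le(1+|x|)(1+|x-y|)$, hence
\[
r^\beta e^{\sigma r}|\varphi(x-y)|\le M\Bigl(\tfrac{1+r}{1+|x-y|}\Bigr)^{\!\beta} e^{\sigma(r-|x-y|)}\le M(1+|x|)^\beta e^{\sigma|x|},
\]
uniformly in $y$. The majorant $M(1+|x|)^\beta e^{\sigma|x|}|\psi(x)|$ is integrable by the assumption on $\psi$. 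Dominated convergence along the subsequence then yields
\[
|y_n|^\beta e^{\sigma|y_n|}\int_{\R^N}(y_n\ast\varphi)\psi\,dx\longrightarrow\gamma\int_{\R^N}\psi(x)\, e^{\sigma x\cdot\omega_\infty}\,dx,
\]
and radial symmetry of $\psi$ (together with the reflection $x_1\mapsto-x_1$, which preserves $\psi$) identifies the right-hand side with $\gamma\int_{\R^N}\psi(x)e^{-\sigma x_1}\,dx$, independent of the choice of subsequence. A standard subsequence argument then promotes this to convergence of the full family as $|y|\to\infty$.

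The main obstacle is the dominating function estimate: the brute bound $|\varphi(x-y)|\le\|\varphi\|_\infty$ produces an $r^\beta e^{\sigma r}$-factor which is incompatible with the integrability weight carried by $\psi$. One must exploit the quantitative decay of $\varphi$ \emph{globally} on $\R^N$, not only in the regime where $|x-y|$ is large, and cancel the growth $r^\beta e^{\sigma r}$ against it via the two triangle-inequality bounds above. Once this majorant is in place, the remainder of the argument is a routine pointwise convergence plus a symmetry identification of the limit.
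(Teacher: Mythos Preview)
Your proof is correct. The paper does not supply its own argument for this proposition; it is quoted verbatim from Bahri--Li \cite{BaLi90} and used as a black box, so there is nothing to compare against here. Your dominated-convergence approach is the standard route: the key observation is exactly the global bound $|\varphi(u)|\le M(1+|u|)^{-\beta}e^{-\sigma|u|}$ together with the two triangle-inequality estimates $r-|x-y|\le|x|$ and $1+r\le(1+|x|)(1+|x-y|)$, which produce an integrable majorant $M(1+|x|)^\beta e^{\sigma|x|}|\psi(x)|$ (comparable to the hypothesis weight $(1+|x|^\beta)e^{\sigma|x|}|\psi(x)|$ since $(1+t)^\beta\le C_\beta(1+t^\beta)$ for $t\ge0$). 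The pointwise limit identification and the final subsequence/radial-symmetry argument are clean.
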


An immediate consequence of this proposition and the
estimate \eqref{eqn:eigen_neg} on the eigenfunctions $e_i$ is the existence
of some constant $\kappa_1>0$ such that
\begin{equation}\label{eqn:asympt1}
 \int_{\R^N}(y\ast u_\infty) |e_i|\, dx\leq \kappa_1 |y|^{-\frac{N-1}{2}} e^{-\sqrt{a_\infty}|y|} 
 \qquad \text{for all $i=1,\ldots, n$ and $|y|\geq 1$.}
\end{equation}

In the next result, we consider a convex combination of two translates of the ground-state solution $u_\infty$ and its projection on
$\mM$. We derive estimates concerning its behavior as these translates are moved far apart from each other and far away from the origin. 
The outcome of this study will be used to show that the energy of such a convex combination can be made smaller than $2c_\infty$ under
suitable conditions (see Lemma \ref{lem:energy}.) We introduce the following notation which will be used in the next two lemmata.
For $y,z\in\R^N$, we let $\di{y,z}:=\min\{|y|,|z|,|y-z|\}$, where $|\cdot|$ denotes the Euclidean norm on $\R^N$.
\begin{lem}\label{lem:S2}
  \begin{itemize}
    \item[(i)] There exists $S_1\geq 1$ such that $(1-s)(y\ast u_\infty)+s(z\ast u_\infty)\notin E^-$, for all $y,z\in\R^N $ with 
     $\di{y,z}\geq S_1$ and all $s\in[0,1]$.
    \item[(ii)] For $y, z\in\R^N$ with $\di{y,z}\geq S_1$, and $s\in[0,1]$, let $t_\infty=t_\infty(s,y,z)>0$ and 
     $h_\infty=h_\infty(s,y,z)\in E^-$ be chosen such that 
     \[\hat{\m}((1-s)(y\ast u_\infty)+s(z\ast u_\infty))=t_\infty[(1-s)(y\ast u_\infty) +s(z\ast u_\infty)+ h_\infty]\] holds.
     Then
     \begin{equation}\label{eqn:t_bounded_limit}
      0<\inf_{\substack{\di{y,z}\geq S_1\\s\in[0,1]}}t_\infty(s,y,z)\leq \sup_{\substack{\di{y,z}\geq S_1 \\s\in[0,1]}}t_\infty(s,y,z) <+\infty
     \end{equation}
     and there exists $\kappa_2>0$ such that 
     \begin{equation}\label{eqn:h_bounded_limit}
       \sup_{s\in[0,1]}\|h_\infty(s,y,z)\| \leq \kappa_2 \,\max\{|y|^{-\frac{N-1}{2}},|z|^{-\frac{N-1}{2}}\}\, e^{-\sqrt{a_\infty}\min\{|y|,|z|\}}
     \end{equation}
     for $y,z \in \R^N$ with $\di{y,z} \ge S_1$.\\ 
     Furthermore, if $((s_k,y_k,z_k))_k\subset[0,1]\times\R^N\times\R^N$ satisfies
     $\llim_{k\to\infty}s_k=s\in[0,1]$ and $\llim_{k\to\infty}\di{y_k,z_k}=\infty$, there exists some 
     $T=\llim_{k\to\infty}t_\infty(s_k,y_k,z_k)>0$ such that
     \begin{equation}\label{eqn:J_infty} 
       J(\hat{\m}((1-s_k)(y_k\ast u_\infty) +s_k(z_k\ast u_\infty)))\to J_\infty((1-s)Tu_\infty)+ J_\infty(sTu_\infty),
     \end{equation}
     as $k\to\infty$.
     Moreover, $T=T(s)$ is uniquely determined by the relation
     \begin{equation}\label{eqn:t_tilde}
       \int_{\R^N} [((1-s)^2+s^2)Tf_\infty(u_\infty)-(1-s)f_\infty((1-s)Tu_\infty)-sf_\infty(sTu_\infty)]u_\infty\, dx=0.
     \end{equation}
  \end{itemize}
\end{lem}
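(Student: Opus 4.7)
The plan is to prove (i) by a quantitative $L^2$-estimate contrasting $\|v_s\|_{L^2}$ (bounded below) with $\|v_s^-\|_{L^2}$ (arbitrarily small), and to prove (ii) by establishing uniform bounds on $t_\infty$ and $\|h_\infty\|$ before passing to the limit in the defining equations for $\hat{\m}(v_s)$.

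For part (i), set $v_s=(1-s)(y\ast u_\infty)+s(z\ast u_\infty)$ and observe that $v_s\in E^-$ is equivalent to $v_s^+=0$. Since $e_1,\dots,e_n$ is $L^2$-orthonormal, I would write $\|v_s^-\|_{L^2}^2=\sum_{i=1}^n\langle v_s,e_i\rangle_{L^2}^2$ and bound each summand by $\bigl(\int(y\ast u_\infty)|e_i|\,dx+\int(z\ast u_\infty)|e_i|\,dx\bigr)^2$, which is controlled by \eqref{eqn:asympt1} and tends to $0$ as $\min\{|y|,|z|\}\to\infty$, uniformly in $s\in[0,1]$. On the other hand,
\[\|v_s\|_{L^2}^2=((1-s)^2+s^2)\|u_\infty\|_{L^2}^2+2s(1-s)\langle y\ast u_\infty,z\ast u_\infty\rangle_{L^2},\]
and the cross term tends to $0$ as $|y-z|\to\infty$ (by Proposition~\ref{prop:1.2_BL}, or by weak convergence of translates in $L^2$). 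Hence $\|v_s\|_{L^2}$ is bounded below uniformly in $s$ while $\|v_s^-\|_{L^2}$ becomes arbitrarily small for $\di{y,z}$ large, which gives $v_s^+\ne 0$ and proves (i) for a suitable $S_1$.

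For part (ii), write $u=\hat{\m}(v_s)=t_\infty(v_s+h_\infty)$ with $t_\infty>0$, $h_\infty\in E^-$. By (i) and finite-dimensionality of $E^-$, the quantity $\|v_s^+\|$ is bounded above and below uniformly for $\di{y,z}\ge S_1$, so $t_\infty\|v_s^+\|=\|u^+\|\ge\delta$ from Lemma~\ref{lem:Nehari_f}(iii) yields a uniform positive lower bound on $t_\infty$. For the upper bound, I would prove that $J(u)$ is uniformly bounded above: since $v_s$ is concentrated near $y$ and $z$ (where $a\approx a_\infty$, $F\approx F_\infty$), a test-function comparison on $\hat{E}(v_s)$ shows $\max_{\hat{E}(v_s)}J$ differs from a two-bump $J_\infty$-maximum by $o(1)$ as $\di{y,z}\to\infty$; Lemma~\ref{lem:coercivity} then forces $\|u\|$ (hence $t_\infty$) to remain bounded. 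The estimate \eqref{eqn:h_bounded_limit} follows from the critical point conditions $J'(u)e_i=0$: using $-\Delta e_i+ae_i=\lambda_ie_i$, these read $\lambda_i\int u\,e_i\,dx=\int f(x,u)e_i\,dx$, so the $L^2$-coefficient of $v_s^-+h_\infty$ along $e_i$ equals $\frac{1}{t_\infty\lambda_i}\int f(x,u)e_i\,dx$. Controlling $|f(x,u)|\le\eps|u|+C_\eps|u|^{p-1}$, using that $|u|$ is essentially supported near $y,z$ with the decay of $u_\infty$, and invoking Proposition~\ref{prop:1.2_BL} together with \eqref{eqn:eigen_neg}, produces exactly the rate $\min\{|y|,|z|\}^{-(N-1)/2}e^{-\sqrt{a_\infty}\min\{|y|,|z|\}}$ (from which the bound on $\|h_\infty\|$ follows since $h_\infty=(v_s^-+h_\infty)-v_s^-$ and the same rate controls $\|v_s^-\|$ by part (i)).

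For the limit statements, given $(s_k,y_k,z_k)$ with $s_k\to s\in[0,1]$ and $\di{y_k,z_k}\to\infty$, extract a subsequence with $t_k\to T$ (by the above bounds) and $\|h_k\|\to 0$ (by \eqref{eqn:h_bounded_limit}). Splitting $J(u_k)$ into contributions in neighborhoods of $y_k$, of $z_k$, and elsewhere, and using (A1), (F1), dominated convergence, and the asymptotic disjointness of the two bumps, each piece converges to the corresponding $J_\infty$-term, yielding \eqref{eqn:J_infty}. Similarly passing to the limit in $J'(u_k)v_{s_k}^{(k)}=0$ and splitting the resulting integrals gives \eqref{eqn:t_tilde}. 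Uniqueness of $T>0$ satisfying \eqref{eqn:t_tilde} follows by rewriting it in terms of $g(r)=f_\infty(r)/r$, which is strictly increasing by (F4): the left-hand side becomes a strictly decreasing function of $T$, positive as $T\to 0^+$ thanks to (F2), and negative as $T\to\infty$ thanks to (F3) (for $s\in(0,1)$; at $s\in\{0,1\}$ one checks directly that $T=1$). The main obstacle I anticipate is the uniform upper bound on $J(\hat{\m}(v_s))$: without direct access to the limit formula, one must quantitatively track how the projected energy compares with the two-bump $J_\infty$-decomposition despite the interaction between the bumps and the $E^-$-correction $h_\infty$.
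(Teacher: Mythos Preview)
Your approach to (i) via $L^2$-norms is fine and essentially equivalent to the paper's, which instead shows that the quadratic form $\int |\nabla v_s|^2 + a(x)v_s^2\,dx$ converges to a positive limit (and is nonpositive on $E^-$). Your lower bound on $t_\infty$ via Lemma~\ref{lem:Nehari_f}(iii) is also correct.

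There is, however, a genuine gap in your derivation of the quantitative estimate \eqref{eqn:h_bounded_limit}. You propose to use $J'(u)e_i=0$, i.e., $\langle v_s^-+h_\infty,e_i\rangle_{L^2}=\frac{1}{t_\infty\lambda_i}\int f(x,u)e_i\,dx$ with $u=t_\infty(v_s+h_\infty)$, and then to bound the right-hand side by exploiting that ``$|u|$ is essentially supported near $y,z$''. But $u$ contains $t_\infty h_\infty$, which is a linear combination of the $e_i$ and is \emph{not} concentrated near $y,z$; it lives near the origin. Any estimate of $\int f(x,u)e_i$ therefore feeds $\|h_\infty\|$ back into the right-hand side, and the resulting inequality (of the form $\|h_\infty\|\le (\text{rate})+C\|h_\infty\|$ with a constant $C$ you do not control) does not close. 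The paper avoids this circularity by testing with $h_\infty$ rather than with individual $e_i$: from $J'(u)h_\infty=0$ one gets
\[
\|h_\infty\|^2=\int\nabla w_\infty\!\cdot\!\nabla h_\infty+a\,w_\infty h_\infty\,dx-\frac{1}{t_\infty}\int f(x,t_\infty(w_\infty+h_\infty))\,h_\infty\,dx,
\]
and then uses the monotonicity consequence of (F4), $f(x,u+v)v\ge f(x,u)v$, to replace $f(x,t_\infty(w_\infty+h_\infty))h_\infty$ by $f(x,t_\infty w_\infty)h_\infty$ with the correct sign. This decouples $h_\infty$ from the nonlinearity; since $w_\infty=(1-s)(y\ast u_\infty)+s(z\ast u_\infty)$ is uniformly in $L^\infty$, one has $|f(x,t_\infty w_\infty)|\le C\,w_\infty$, and \eqref{eqn:asympt1} then yields $\|h_\infty\|\le C\max_i\int w_\infty|e_i|\,dx$, i.e., exactly \eqref{eqn:h_bounded_limit}. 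Without this monotonicity trick (or an equivalent device), your argument does not go through.

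A secondary issue is the upper bound on $t_\infty$. You propose to bound $J(\hat\m(v_s))=\max_{\hat E(v_s)}J$ uniformly and then invoke coercivity, but bounding this maximum uniformly over all $(s,y,z)$ \emph{a priori} is essentially the limit statement \eqref{eqn:J_infty} you are trying to prove; the ``test-function comparison'' you sketch requires control of the error terms over all $t\ge 0$ and $h\in E^-$, not just at the maximizer. The paper instead argues directly by contradiction: assuming $t_\infty\to\infty$ along some sequence (with $\|h_\infty\|$ already bounded), it translates back by $y_k$ (or $z_k$), observes that the translated argument of $F$ tends to $+\infty$ pointwise on a set of positive measure, and applies (F3) and Fatou to force the quadratic form to $+\infty$, contradicting its boundedness. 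This Fatou argument is short and does not require any preliminary energy bound.
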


\begin{proof}
\begin{itemize}
  \item[(i)] Since $a(x)\to a_\infty$ as $|x|\to\infty$ and $((z-y)\ast u_\infty)(x) \to 0$ as $|z-y|\to\infty$ for all $x\in\R^N$, 
  the dominated convergence theorem implies that
  \begin{align}
    \int_{\R^N} |\nabla[(1-s)(y\ast u_\infty) &+ s(z\ast u_\infty)]|^2 + a(x) [(1-s)(y\ast u_\infty)+s(z\ast u_\infty)]^2\, dx\nonumber\\
    &\lra ((1-s)^2+s^2) \int_{\R^N}|\nabla u_\infty|^2+a_\infty u_\infty^2\, dx \label{eqn:A_Teil_konv}
  \end{align}
  as $\di{y,z}\to \infty$.
  Since $\int_{\R^N}|\nabla h|^2+a(x) h^2\, dx =-\|h\|^2\leq 0$ for all $h\in E^-$, the conclusion follows from
  \eqref{eqn:A_Teil_konv} and the fact that $(1-s)^2+s^2\geq \frac12$ for all $s\in[0,1]$.
  
  \item[(ii)] We set $w_\infty:=(1-s)(y\ast u_\infty) + s(z\ast u_\infty)$. Since
  $J'(t_\infty [w_\infty+ h_\infty])(t_\infty [w_\infty+ h_\infty])=0$, we find
  \begin{align*}
    0< 2c &\leq 2J(t_\infty [w_\infty+h_\infty])+2\int_{\R^N}F(x,t_\infty [w_\infty+h_\infty])\, dx\\
    &= t_\infty^2 \int_{\R^N} |\nabla[w_\infty+h_\infty]|^2 + a(x) [w_\infty+h_\infty]^2\, dx\\
    &\leq t_\infty^2 \left\{ \int_{\R^N}|\nabla w_\infty|^2+a(x) w_\infty^2\, dx
    + 4C\|u_\infty\|\, \|h_\infty\| -\|h_\infty\|^2\right\}.
  \end{align*}
  Using \eqref{eqn:A_Teil_konv}, we deduce that
  $\supl_{\substack{\di{y,z}\geq S_1\\ s\in[0,1]}} \|h_\infty(s,y,z)\|<+\infty$ and \\
  $\infl_{\substack{\di{y,z}\geq S_1\\ s\in[0,1]}}t_\infty(s,y,z)>0.$
  Now suppose by contradiction, that $t_\infty$ is not bounded above, and let
  $((s_k,y_k,z_k))_k$ $\subset[0,1]\times\R^N\times\R^N$ satisfy $\di{y,z}\geq S_1$ for all $k$
  as well as $\llim_{k\to\infty}t_\infty(s_k,y_k,z_k)=\infty$.
  Up to a subsequence, we may assume $s_k\to s\in[0,1]$, and the continuity of $\hat{\m}$ then
  implies $\llim_{k\to\infty}|y_k|=\infty$ and $s\neq 1$, or $\llim_{k\to\infty}|z_k|=\infty$ and $s\neq 0$, up
  to a subsequence.

  We consider the case $s\neq 1$ and $|y_k|\to\infty$ as $k\to\infty$; the other case follows similarly.
  Writing $h_\infty=\suml_{i=1}^n A_i^\infty e_i$ with $A_1^\infty, \ldots, A_n^\infty\in\R$, the upper bound on $\|h_\infty\|$
  and the decay property \eqref{eqn:eigen_neg} of the eigenfunctions $e_1,\ldots, e_n$ imply $h_\infty(x+y_k)\to 0$ as 
  $k\to\infty$ for all $x\in\R^N$. Since $u_\infty>0$ holds on $\R^N$, we have for all $x\in\R^N$
  \[t_\infty[w_\infty(x+y_k)+h_\infty(x+y_k)] \geq t_\infty[(1-s_k)u_\infty(x)+h_\infty(x+y_k)]\to+\infty,\]
  as $k\to\infty.$
  The assumption (F3) and Fatou's Lemma then give
  \begin{align*}
    &\int_{\R^N}|\nabla[w_\infty(x)+h_\infty(x)]|^2 +a(x)[w_\infty(x)+h_\infty(x)]^2\, dx\\
    &=\frac{1}{t_\infty}\int_{\R^N}f(x,t_\infty[w_\infty(x) +h_\infty(x)])(w_\infty(x)+h_\infty(x))\, dx\\
    &> 2\int_{\R^N}\frac{F(x+y_k,t_\infty[w_\infty(x+y_k)+h_\infty(x+y_k)])}{(t_\infty[w_\infty(x+y_k)+h_\infty(x+y_k)])^2}
    (w_\infty(x+y_k)+h_\infty(x+y_k))^2\, dx\\
    &\to +\infty, \quad \text{ as }k\to\infty,
  \end{align*}
  which contradicts the boundedness of $w_\infty+h_\infty$ and concludes the proof of \eqref{eqn:t_bounded_limit}.

  Now the property $J'(t_\infty[w_\infty+ h_\infty])h_\infty=0$ and the inequality 
  $f(x,u+v)v\geq f(x,u)v$ for all $x\in\R^N$, $u,v\in\R$, which follows from (F4), together give
  \begin{equation}\label{eqn:h_tilde_infty_f}
    \|h_\infty\|^2 \leq \int_{\R^N}\nabla w_\infty\cdot\nabla h_\infty +a(x)w_\infty h_\infty\, dx 
    + t_\infty^{-1}\int_{\R^N}|f(x,t_\infty w_\infty) h_\infty|\, dx.
  \end{equation}
  We write again $h_\infty=\suml_{i=1}^n A_i^\infty e_i$, so that
  \begin{equation}\label{eqn:equiv_norm_h}
     |\lambda_n| \sum_{i=1}^n (A_i^\infty)^2 \leq \|h_\infty\|^2=
     -\sum_{i=1}^n \lambda_i (A_i^\infty)^2 \leq |\lambda_1|\sum_{i=1}^n (A_i^\infty)^2.
  \end{equation}
  Then, using \eqref{eqn:estim_f_eps} and the facts that 
  $u_\infty\in L^\infty(\R^N)$ and $w_\infty$ is positive, we infer from
  \eqref{eqn:h_tilde_infty_f} and \eqref{eqn:equiv_norm_h} that
  \begin{align*}
    \|h_\infty\|^2 &\leq \int_{\R^N} w_\infty \,|\sum_{i=1}^n \lambda_i A_i^\infty e_i|\, dx
    + C'  \int_{\R^N} w_\infty |\sum_{i=1}^n A_i^\infty e_i|\,dx\\ 
    &\leq C'' \|h_\infty\| \max_{1\leq i\leq n}\int_{\R^N}w_\infty | e_i|\,dx,
  \end{align*}
  with constants $C',C''>0$, and hence
  \begin{align*}
    \|h_\infty\| \leq C''  \max_{1\leq i\leq n}\int_{\R^N} w_\infty |e_i|\,dx \le
    C''  \max_{1\leq i\leq n}\int_{\R^N} (y* u_\infty+z*u_\infty)|e_i|\,dx\\
    \le 2C'' \kappa_1 \max\{|y|^{-\frac{N-1}{2}},|z|^{-\frac{N-1}{2}}\}\, e^{-\sqrt{a_\infty}\min\{|y|,|z|\}}
  \end{align*}
  by \eqref{eqn:asympt1}. This proves \eqref{eqn:h_bounded_limit} with
  $\kappa_2= 2C'' \kappa_1$.

  Let now $((s_k,y_k,z_k))_k\subset[0,1]\times\R^N\times\R^N$ and $s\in[0,1]$ be such that $\llim_{k\to\infty}s_k=s$, and 
  $\llim_{k\to\infty}\di{y_k,z_k}=\infty$ hold.
  By \eqref{eqn:t_bounded_limit}, we can assume that, up to a subsequence,
  $\llim_{k\to\infty}t_\infty(s_k,y_k,z_k)=T>0$ holds. Consequently, we find
  \begin{align*}
    0 &=\lim_{k\to\infty}J'(t_\infty[w_\infty+h_\infty])t_\infty[w_\infty+h_\infty]\\
    &=((1-s)^2+s^2)T^2\int_{\R^N}|\nabla u_\infty|^2 +a_\infty u_\infty^2\, dx\\
    &- \int_{\R^N}f_\infty((1-s)Tu_\infty)(1-s)Tu_\infty\, dx 
    - \int_{\R^N}f_\infty(sTu_\infty)sTu_\infty\, dx.
  \end{align*}
  Since $J_\infty'(u_\infty)u_\infty=0$, we conclude that $T$ satisfies \eqref{eqn:t_tilde}. 
  The strict monotonicity in (F4) and the fact that $u_\infty>0$ on $\R^N$ give the uniqueness of $T$ 
  (recall that $s\in[0,1]$ is fixed). Hence the whole sequence $(t_\infty(s_k,y_k,z_k))_k$ converges towards $T$ 
  and the latter is uniquely determined by $s=\llim_{k\to\infty}s_k$.

  To prove \eqref{eqn:J_infty}, remark that
  \begin{equation*}
    \lim_{k\to\infty}\int_{\R^N}F(x,t_\infty[w_\infty+h_\infty])\, dx
    =\int_{\R^N}F_\infty((1-s)Tu_\infty)\, dx +\int_{\R^N}F_\infty(sTu_\infty)\,dx
  \end{equation*}
  holds. Hence, similar arguments as above imply
  $\llim_{k\to\infty}J(t_\infty[w_\infty+h_\infty])=J_\infty((1-s)Tu_\infty) + J_\infty(sTu_\infty)$
  which concludes the proof.
\end{itemize}
\end{proof}

\begin{remq}\label{remq:A_infty}
Taking $s=0$ in the above lemma, we find $\llim_{|y|\to\infty}J(\hat{\m}(y\ast u_\infty))=c_\infty$. In particular,
$c\leq c_\infty$ holds.
\end{remq}

The following result is crucial for the construction of the min-max value below. 
We now work under the additional assumption
\eqref{eqn:thm1.1:a_f} of Theorem \ref{thm:exist}. Furthermore,
we may assume that 
\begin{equation}\label{eq:12}
 2<\alpha<p   
\end{equation}
holds in \eqref{eqn:thm1.1:a_f}.

\begin{lem}[Energy estimate]\label{lem:energy}
  There exists $S_2\geq \frac32 S_1$ such that
  \[ J(\hat{\m}((1-s)(y\ast u_\infty) + s(z\ast u_\infty)))<2c_\infty\]
  holds for all $s\in[0,1]$, $R\geq S_2$ and $y,z\in\R^N$ 
  with $|y| \ge R$, $|z|\geq R$ and $\frac23 R\leq |y-z|\leq 2R$.
\end{lem}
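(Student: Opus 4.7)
The plan is to combine a uniform convergence argument (handling $s$ away from $1/2$) with a detailed interaction analysis (handling $s$ near $1/2$). By Lemma~\ref{lem:S2}(ii), $\hat{\m}(w_\infty) = t_\infty(w_\infty + h_\infty)$ with $t_\infty$ uniformly bounded away from $0$ and $\infty$, and $\|h_\infty\|$ controlled by \eqref{eqn:h_bounded_limit}. As $R\to\infty$ (hence $\di{y,z}\to\infty$), the same lemma gives
\[J(\hat{\m}(w_\infty)) \lra F(s) := J_\infty((1-s)T(s)u_\infty) + J_\infty(sT(s)u_\infty),\]
where $T(s)>0$ is the unique solution of \eqref{eqn:t_tilde} and depends continuously on $s\in[0,1]$ (with $T(0)=T(1)=1$ and $T(1/2)=2$). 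Since $J_\infty(\lambda u_\infty)\le c_\infty$ with equality iff $\lambda = 1$ (from (F4), via the analog of Lemma~\ref{lem:Nehari_f}(ii) for $J_\infty$), $F(s)\le 2c_\infty$ with equality exactly at $s=1/2$. A standard compactness argument promotes the pointwise convergence above to uniform convergence in $s\in[0,1]$, so for any fixed $\delta\in(0,1/2)$ and $R$ sufficiently large, $J(\hat{\m}(w_\infty))<2c_\infty$ for all $s\in [0,1]\setminus[1/2-\delta,\,1/2+\delta]$.

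For $s$ in a small neighborhood of $1/2$, where $F(s)$ approaches $2c_\infty$, the strict inequality must come from next-order corrections. Set $T_1=t_\infty(1-s)$, $T_2=t_\infty s$ (both close to $1$), $u_i = y_i\ast u_\infty$ with $y_1=y$, $y_2=z$, and $u=\hat{\m}(w_\infty)$. Decompose
\[J(u)=J_\infty(u)+\tfrac{1}{2}\!\int(a-a_\infty)u^2\,dx+\int(F_\infty-F)(x,u)\,dx.\]
By \eqref{eqn:thm1.1:a_f}, the last two integrals are bounded by $C\!\int e^{-\alpha\sqrt{a_\infty}|x|}(u^2+|u|^p)\,dx$; since $u$ concentrates near $y$ and $z$ with $\min\{|y|,|z|\}\ge R\ge|y-z|/2$ and $\alpha>2$, a Laplace-type estimate shows these perturbations are $O(|y-z|^{-(N-1)}e^{-\sqrt{a_\infty}|y-z|})$, strictly lower order than the main interaction term below. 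Expanding $J_\infty(t_\infty w_\infty + t_\infty h_\infty)$ around $J_\infty(t_\infty w_\infty)$ and combining \eqref{eqn:h_bounded_limit} with the eigenfunction decay \eqref{eqn:eigen_neg} (whose rate strictly exceeds $\sqrt{a_\infty}$), the $h_\infty$-corrections are likewise $o(|y-z|^{-(N-1)/2}e^{-\sqrt{a_\infty}|y-z|})$.

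The decisive estimate concerns $J_\infty(T_1 u_1 + T_2 u_2)$. Using $-\Delta u_i + a_\infty u_i = f_\infty(u_i)$ to expand the quadratic form and Lemma~\ref{lem:properties_f}(iii) to bound $F_\infty(T_1 u_1 + T_2 u_2) - F_\infty(T_1 u_1) - F_\infty(T_2 u_2)$ from below, one obtains
\begin{align*}
J_\infty(T_1 u_1 + T_2 u_2) &\le J_\infty(T_1 u_1) + J_\infty(T_2 u_2) - T_1\!\int f_\infty(T_2 u_2)u_1\,dx \\
&\quad + T_2\!\int[T_1 f_\infty(u_1)-f_\infty(T_1 u_1)]u_2\,dx + C\!\int (T_1 u_1)^{1+\frac{\nu}{2}}(T_2 u_2)^{1+\frac{\nu}{2}}\,dx.
\end{align*}
Here $J_\infty(T_i u_i) \le c_\infty$; the cubic remainder is $O(e^{-(1+\nu/2)\sqrt{a_\infty}|y-z|})$, again lower order. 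By Proposition~\ref{prop:1.2_BL} (with $\varphi=u_\infty$, $\psi=f_\infty(T_2 u_\infty)$, $\sigma=\sqrt{a_\infty}$, $\beta=(N-1)/2$; the integrability hypothesis holds by (F2$'$)),
\[T_1\!\int f_\infty(T_2 u_2) u_1\,dx \ge \gamma\,|y-z|^{-\frac{N-1}{2}}e^{-\sqrt{a_\infty}|y-z|}\]
for some $\gamma>0$ uniform in $s\in[1/2-\delta,\,1/2+\delta]$. The bracketed term vanishes at $T_1=1$ by the strict monotonicity in (F4), hence is $O(|T_1-1|)\cdot|y-z|^{-(N-1)/2}e^{-\sqrt{a_\infty}|y-z|}$; shrinking $\delta$ makes it negligible relative to $\gamma|y-z|^{-(N-1)/2}e^{-\sqrt{a_\infty}|y-z|}$. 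Combining all the estimates yields
\[J(\hat{\m}(w_\infty)) \le 2c_\infty - \tfrac{\gamma}{2}\,|y-z|^{-\frac{N-1}{2}}e^{-\sqrt{a_\infty}|y-z|}<2c_\infty\]
for $R\ge S_2$ sufficiently large.

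The main difficulty is the near-cancellation at $s=1/2$: in the limit, the projected convex combination has energy exactly $2c_\infty$, so the strict inequality must come from isolating a strictly negative next-order contribution from the nonlinear overlap $-T_1\!\int f_\infty(T_2 u_2)u_1\,dx$, while verifying that every other error (the $h_\infty$-corrections, the $a$- and $F$-perturbations, the cubic cross integral, and the bracket term controlled by $|T_1-1|$) decays at a strictly faster rate. The quantitative assumption $\alpha>2$ in \eqref{eqn:thm1.1:a_f} and the decay rate $(1+\delta_0)\sqrt{a_\infty}$, $\delta_0>0$, of negative-eigenvalue eigenfunctions from \eqref{eqn:eigen_neg} are the key ingredients that make this separation of scales possible.
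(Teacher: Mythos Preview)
Your overall strategy is sound and organizationally different from the paper's. You split $[0,1]$ into a small neighborhood of $s=\tfrac12$ (where the limit $F(s)=2c_\infty$) versus its complement, handling the latter by uniform convergence to $F(s)<2c_\infty$. The paper instead splits off small neighborhoods of the \emph{endpoints} $s=0,1$ (where trivially $J_\infty(st_\infty u_\infty)+J_\infty((1-s)t_\infty u_\infty)\le\tfrac32 c_\infty$) and carries out the quantitative interaction analysis on the whole middle range $[\delta_0,1-\delta_0]$, comparing every error term against the exchange energy $d_{y,z}=\int f_\infty(y\ast u_\infty)(z\ast u_\infty)\,dx$. Your decomposition is conceptually cleaner---it isolates exactly the critical parameter value---while the paper's avoids any uniformity question in Proposition~\ref{prop:1.2_BL}.

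Two places in your sketch need repair. First, hypothesis~\eqref{eqn:thm1.1:a_f} is stated only for $u>0$, but $u=t_\infty(w_\infty+h_\infty)$ need not be positive: near the origin $w_\infty$ and $h_\infty$ are of comparable size $\sim e^{-\sqrt{a_\infty}R}$. The paper handles this by first peeling off $h_\infty$ via~\eqref{eqn:ineq_F_f} (bounding $F(x,t_\infty[w_\infty+h_\infty])-F(x,t_\infty w_\infty)$ separately) and applying~\eqref{eqn:thm1.1:a_f} only to the positive function $t_\infty w_\infty$. Second, your $O(|T_1-1|)$ bound on the bracket term does not follow from (F4) alone; you need (F5) as well, which combined with (F4) gives the pointwise inequality $|T_1 f_\infty(u)-f_\infty(T_1 u)|\le T_1|T_1^\theta-1|\,f_\infty(u)$ and hence $|\text{bracket}|\le C|T_1-1|\,d_{y,z}$. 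Similarly, the uniform lower bound $T_1\int f_\infty(T_2 u_2)u_1\,dx\ge\gamma\,d_{y,z}$ is most cleanly obtained not from Proposition~\ref{prop:1.2_BL} (whose convergence is for fixed $\psi$, whereas your $\psi=f_\infty(T_2 u_\infty)$ varies with $s,y,z$) but from the pointwise inequality $f_\infty(T_2 u)\ge T_2\min\{T_2^\theta,1\}f_\infty(u)$, again via (F4)--(F5). This is precisely how the paper derives~\eqref{eq:15}--\eqref{eq:16}, and inserting these two bounds into your argument closes the gaps.
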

\begin{proof}
Throughout the proof, we consider 
\begin{equation}\label{eq:7}
 R \ge \frac{3}{2} S_1\qquad \text{and} \qquad y,z \in \R^N \quad \text{with $|y| \ge R$, $|z|\geq R$ and $\frac23 R\leq |y-z|\leq 2R$.}  
\end{equation}
For such $y,z$ and $s\in[0,1]$, we set $w_\infty=(1-s)(y\ast u_\infty)+s(z\ast u_\infty)$ and choose $t_\infty, h_\infty$ as in 
Lemma \ref{lem:S2} (ii). We emphasize that $w_\infty$, $t_\infty$ and $h_\infty$ depend
in a crucial way on $y,z$, but we suppress this dependence in our
notation. All constants in the following will neither depend on $R$
nor on $s,y,z$. In the sequel, we will bound terms relative to the
{\em asymptotic exchange energy}  
\[d_{y,z}:= \int_{\R^N} f_\infty(y*u_\infty) (z * u_\infty)\,dx=\int_{\R^N} f_\infty(u_\infty) (z-y)* u_\infty\,dx.\]
We first collect a few easy consequences of
Proposition~\ref{prop:1.2_BL}. First, since by (F2$'$) and \eqref{eqn:decay_u_infty} we have 
\[\int_{\R^N} f_\infty(u_\infty(x))e^{\sqrt{a_\infty} |x|}(1+|x|^{\frac{N-1}{2}})\,dx < \infty,\]
Proposition~\ref{prop:1.2_BL} implies that there is $\kappa_3>0$ such that 
\begin{equation}\label{eq:8}
  \frac{1}{\kappa_3}|y-z|^{-\frac{N-1}{2}}e^{-\sqrt{a_\infty}|y-z|}
  \le d_{y,z} \le \kappa_3 |y-z|^{-\frac{N-1}{2}}e^{-\sqrt{a_\infty}|y-z|}  
\end{equation}
for $R,y,z$ satisfying \eqref{eq:7}.
Moreover, by making $\kappa_3$ larger if necessary, we may also assume
that 
\begin{equation}\label{eq:9}
  \max \{|y|^{1-N},|z|^{1-N}\}e^{-2 \sqrt{a_\infty}\min\{|y|,|z|\}} \le \kappa_3 R^{-\frac{N-1}{2}} d_{y,z}
\end{equation}
for $R,y,z$ satisfying \eqref{eq:7}. Now let $\alpha>2$ be as in
assumption \eqref{eqn:thm1.1:a_f}. Applying
Proposition~\ref{prop:1.2_BL} to $\varphi= u_\infty^2$, $\psi(x)=e^{-\alpha\sqrt{a_\infty}|x|}$, $\sigma=2 \sqrt{a_\infty}$
and $\beta=N-1$, we obtain 
\[\int_{\R^N} e^{-\alpha\sqrt{a_\infty}|x|}\{(y\ast u_\infty)^2 + (z\ast u_\infty)^2\}\, dx 
  \le C \max \{|y|^{1-N},|z|^{1-N}\}\,e^{-2 \sqrt{a_\infty}\min\{|y|,|z|\}}\]
for all $y,z \in \R^N$ with some constant $C>0$. Therefore by
\eqref{eq:9} we have,
making $\kappa_3$ larger if necessary, 
\begin{equation}\label{eq:10}
  \int_{\R^N} e^{-\alpha\sqrt{a_\infty}|x|}\{(y\ast u_\infty)^2 + (z\ast u_\infty)^2\}\, dx \le \kappa_3 R^{-\frac{N-1}{2}} d_{y,z}  
\end{equation}
for all $R,y,z$ satisfying \eqref{eq:7}. Moreover, taking \eqref{eq:12} into account, and applying Proposition~\ref{prop:1.2_BL} to
$\varphi(x)=e^{-\alpha\sqrt{a_\infty}|x|}$, $\psi=u_\infty^p$, $\sigma=\alpha \sqrt{a_\infty}$ and
$\beta=\alpha\frac{N-1}{2}$, we find that 
\begin{align*}
  &\int_{\R^N} e^{-\alpha\sqrt{a_\infty}|x|}\{(y\ast u_\infty)^p + (z\ast u_\infty)^p\}\, dx = \int_{\R^N} [(-y)*\varphi+(-z)* \varphi]\psi \,dx\\ 
  &\le C' \max \{|y|^{-\alpha \frac{N-1}{2}},|z|^{-\alpha \frac{N-1}{2}}\}\,e^{-\alpha \sqrt{a_\infty}\min\{|y|,|z|\}}\\
  &\le C' \max \{|y|^{1-N},|z|^{1-N}\}\,e^{-2 \sqrt{a_\infty}\min\{|y|,|z|\}}
\end{align*}
for all $y,z \in \R^N$, $|y|, |z|\geq 1$, with some constant $C'>0$. Therefore by 
\eqref{eq:9} we have, 
making $\kappa_3$ again larger if necessary, 
\begin{equation} \label{eq:13}
  \int_{\R^N} e^{-\alpha\sqrt{a_\infty}|x|}\{(y\ast u_\infty)^p + (z\ast u_\infty)^p\}\, dx \le \kappa_3 R^{-\frac{N-1}{2}} d_{y,z}  
\end{equation}
for all $R,y,z$ satisfying \eqref{eq:7}. Finally, let $\nu>0$ be as in
assumption (F2$'$). Then applying Proposition~\ref{prop:1.2_BL} to
$\varphi=\psi= u_\infty^{1+\frac{\nu}{2}}$, $\alpha= \sqrt{a_\infty}$ and $\beta= N-1$ yields 
\[\int_{\R^N} (y\ast u_\infty)^{1+\frac{\nu}{2}}(z\ast u_\infty)^{1+\frac{\nu}{2}}\, dx=
  \int_{\R^N} ((y-z) \ast u_\infty)^{1+\frac{\nu}{2}} u_\infty^{1+\frac{\nu}{2}}\, dx \le C'' |y-z|^{1-N}e^{-\sqrt{a_\infty}|y-z|} \]
for all $y,z \in \R^N$ with some constant $C''>0$. Therefore, making $\kappa_3$ again larger if necessary, 
\begin{equation} \label{eq:14}
  \int_{\R^N} (y\ast u_\infty)^{1+\frac{\nu}{2}}(z\ast u_\infty)^{1+\frac{\nu}{2}}\, dx \le \kappa_3 R^{-\frac{N-1}{2}} d_{y,z}    
\end{equation}
holds for all $R,y,z$ satisfying \eqref{eq:7}. We now have all the tools to estimate
\[ J(\hat\m(w_\infty)) = \frac{t_\infty^2}{2}\int_{\R^N}|\nabla(w_\infty+h_\infty)|^2+a(x)[w_\infty+h_\infty]^2\, dx
  -\int_{\R^N}F(x,t_\infty[w_\infty+h_\infty]), dx.\]
We start by estimating the first integral on the right-hand side which we split in the following way.
\begin{align*}
  &\int_{\R^N}|\nabla(w_\infty+h_\infty)|^2+a(x)[w_\infty+h_\infty]^2\, dx\\
  &\quad =((1-s)^2+s^2)\int_{\R^N}|\nabla u_\infty|^2+a_\infty u_\infty^2\, dx\\
  &\quad+ 2(1-s)s\int_{\R^N}\nabla(y\ast u_\infty)\cdot\nabla(z\ast u_\infty)+a_\infty (y\ast u_\infty)(z\ast u_\infty)\, dx\\
  &\quad+\int_{\R^N}(a(x)-a_\infty)w_\infty^2\, dx + 2\int_{\R^N}\nabla w_\infty\cdot\nabla h_\infty+a(x) w_\infty h_\infty\, dx -\|h_\infty\|^2.
\end{align*}
The property $J_\infty'(u_\infty)=0$ implies that
\begin{equation}\label{eqn:Jprime_infty}
  \int_{\R^N}\!\!\!\nabla(y\ast u_\infty)\cdot\nabla(z\ast u_\infty)+a_\infty (y\ast u_\infty)(z\ast u_\infty)\, dx  
  =\int_{\R^n}\!\!f_\infty(y\ast u_\infty)(z\ast u_\infty)= d_{y,z}.
\end{equation}
We deduce from \eqref{eq:10} and condition \eqref{eqn:thm1.1:a_f} that 
\begin{align*}
  &\int_{\R^N}(a(x)-a_\infty) w_\infty^2\, dx 
   \leq 2C_1 \int_{\R^N}[(y\ast u_\infty)^2 + (z\ast u_\infty)^2]e^{-\alpha\sqrt{a_\infty}|x|}\, dx\\
  &\leq 2 C_1 \kappa_3 R^{-\frac{N-1}{2}} d_{y,z}\qquad \text{for all $s\in[0,1]$ and $R,y,z$ satisfying \eqref{eq:7}.}
\end{align*}
As in the proof of Lemma \ref{lem:S2}, we write $h_\infty=\suml_{i=1}^n A_i^\infty e_i$. 
Using \eqref{eqn:asympt1}, \eqref{eqn:h_bounded_limit},
\eqref{eqn:equiv_norm_h} and \eqref{eq:9}, we obtain
\begin{align*}
  &\int_{\R^N}\nabla w_\infty\cdot\nabla h_\infty+a(x) w_\infty h_\infty\, dx
  \leq \left(\sum_{i=1}^n(A_i^\infty)^2\right)^{\frac12}\left(\sum_{i=1}^n \lambda_i^2
  \left(\int_{\R^N}w_\infty |e_i|\, dx\right)^2\right)^{\frac12}\\
  &\leq \kappa_4 \max\{|y|^{1-N},|z|^{1-N}\}e^{-2\sqrt{a_\infty}\min\{|y|,|z|\}}
  \leq \kappa_4 \kappa_3 R^{-\frac{N-1}{2}}d_{y,z}
\end{align*}
for $s\in[0,1]$ and $R,y,z$ satisfying \eqref{eq:7}. Here $\kappa_4:= \frac{2\sqrt{n}|\lambda_1|}{\sqrt{|\lambda_n|}}\kappa_1 \kappa_2$. 
Turning to the second integral, we write
\begin{align*}
  &\int_{\R^N}F(x,t_\infty[w_\infty+h_\infty])\, dx = \int_{\R^N} F_\infty(st_\infty  u_\infty)+F_\infty((1-s)t_\infty u_\infty)\, dx\\
  &+\int_{\R^N}F_\infty(t_\infty w_\infty) -[ F_\infty(st_\infty(y\ast u_\infty))+F_\infty((1-s)t_\infty (z\ast u_\infty)) ]\, dx \\
  &+\int_{\R^N}F(x,t_\infty w_\infty)-F_\infty(t_\infty w_\infty)\, dx+\int_{\R^N}F(x, t_\infty[w_\infty+h_\infty])-F(x,t_\infty w_\infty)\, dx.
\end{align*}
From \eqref{eqn:ineq_Finf_finf} and \eqref{eq:14}, it follows that
\newpage
\begin{align*}
  &\int_{\R^N}F_\infty(t_\infty w_\infty) -[ F_\infty(st_\infty(y\ast u_\infty))+F_\infty((1-s)t_\infty (z\ast u_\infty)) ]\, dx\\
  &\geq (1-s)t_\infty\int_{\R^N} f_\infty (st_\infty(y\ast u_\infty))(z\ast u_\infty)\, dx \\
  &\qquad+ st_\infty \int_{\R^N}f_\infty((1-s)t_\infty (z\ast u_\infty))(y\ast u_\infty)\,dx \\
  &\qquad- C t_\infty^{2+\nu} s^{1+\frac{\nu}{2}}(1-s)^{1+\frac{\nu}{2}}\int_{\R^N} (y\ast u_\infty)^{1+\frac{\nu}{2}}(z\ast u_\infty)^{1+\frac{\nu}{2}}\, dx \\
  &\geq (1-s)t_\infty\int_{\R^N} f_\infty (st_\infty(y\ast u_\infty))(z\ast u_\infty)\, dx \\
  &\qquad+ st_\infty \int_{\R^N}f_\infty((1-s)t_\infty (z\ast u_\infty))(y\ast u_\infty)\,dx  -\kappa_5 R^{-\frac{N-1}{2}}d_{y,z}
\end{align*}
for $s\in[0,1]$ and $R,y,z$ satisfying \eqref{eq:7}. Here $\kappa_5:=
C \kappa_3 \supl_{\substack{\di{y,z}\geq S_1  \\s\in[0,1]}}[t_\infty(s,y,z)]^{2+\nu}$ and $C=C_\rho$ is the constant from \eqref{eqn:ineq_Finf_finf}
corresponding to the value 
\[\rho= \|u_\infty\|_\infty \sup_{\substack{\di{y,z}\geq S_1  \\s\in[0,1]}}t_\infty(s,y,z),\]
which is finite by \eqref{eqn:t_bounded_limit}. Moreover, condition \eqref{eqn:thm1.1:a_f} as well as 
\eqref{eq:10},~\eqref{eq:13} and \eqref{eqn:t_bounded_limit}  imply that 
\begin{align*}
  \int_{\R^N}&F(x,t_\infty w_\infty)-F_\infty(t_\infty w_\infty)\, dx 
  \geq -2C_2 t_\infty^2 \int_{\R^N} e^{-\alpha\sqrt{a_\infty}|x|}\{(y\ast u_\infty)^2 + (z\ast u_\infty)^2\}\, dx\\
  &-2^{p-1} C_2 t_\infty^p \int_{\R^N}  e^{-\alpha\sqrt{a_\infty}|x|}\{(y\ast u_\infty)^p + (z\ast u_\infty)^p\}\, dx
  \geq -\kappa_6 R^{-\frac{N-1}{2}} d_{y,z},
\end{align*} 
for $s\in[0,1]$ and $R,y,z$ satisfying \eqref{eq:7}, where
$\kappa_6>0$ is a constant. From \eqref{eqn:ineq_F_f}, \eqref{eqn:t_bounded_limit},
\eqref{eqn:h_bounded_limit} and \eqref{eqn:equiv_norm_h}, we finally
obtain a constant $\kappa_7>0$ such that 
\begin{align*}
  &\int_{\R^N}F(x, t_\infty(w_\infty+h_\infty))-F(x,t_\infty w_\infty)\, dx
  \geq \int_{\R^N}f(x, t_\infty w_\infty)h_\infty\, dx \\
  &= \sum_{i=1}^n A_i^\infty \int_{\R^N} f(x, t_\infty w_\infty) e_i\,dx
  \geq - \!\!\sup_{\substack{|\tau| \le 2 t_\infty \|u_\infty\|_\infty \\ x \in \R^N}}\!\!\!\!\frac{|f(x,\tau)|}{|\tau|}
  \;t_\infty \sum_{i=1}^n|A_i^\infty| \int_{\R^N} w_\infty |e_i|\, dx\\
  & \geq -\kappa_7 R^{-\frac{N-1}{2}}d_{y,z}
\end{align*}
for $s\in[0,1]$ and $R,y,z$ satisfying \eqref{eq:7}.
Summarizing, we can write
\begin{align}
  J(\hat\m(w_\infty))&\le J_\infty(st_\infty u_\infty) + J_\infty((1-s)t_\infty u_\infty) \label{eq:11}\\
  &+ (s(1-s)t_\infty^2+\kappa_8 R^{-\frac{N-1}{2}})d_{y,z} \nonumber\\
  &-(1-s)t_\infty\int_{\R^N}f_\infty(st_\infty (y\ast u_\infty))(z\ast u_\infty) dx \nonumber\\
  &-st_\infty \int_{\R^N}f_\infty((1-s)t_\infty(z\ast u_\infty))(y\ast u_\infty) dx.\nonumber
\end{align}
with $\kappa_8=\tau^2\kappa_3 (C_1 +\kappa_4) +\sum \limits_{j=5}^7 \kappa_j$ and 
$\tau=\sup\{t_\infty(s,y,z)\,:\, s\in[0,1], \; \di{y,z}\geq S_1\}$. Now, by \eqref{eqn:t_bounded_limit}, we can find some 
$0<\delta_0<1$ such that 
\[J_\infty(st_\infty u_\infty) + J_\infty((1-s)t_\infty u_\infty)+s(1-s)t_\infty^2 d_{y,z}  \leq \frac{3}{2}c_\infty\]
for all $s\in[0,\delta_0)\cup(1-\delta_0,1]$ and $R,y,z$
satisfying \eqref{eq:7}. Hence, by \eqref{eq:11}, there exists 
$R_0 \ge \frac32S_1$ such that 
\begin{equation} \label{eq:17}
  J(\hat{\m}(w_\infty))<2c_\infty \quad \text{for $R\geq R_0$, $y,z$ satisfying \eqref{eq:7}
  and $s\in [0,\delta_0)\cup (1-\delta_0,1]$.}  
\end{equation}
Next, conditions (F4) and (F5) give
\[\int_{\R^N}f_\infty(st_\infty (y\ast u_\infty))(z\ast u_\infty) dx \geq st_\infty\min\{(st_\infty)^\theta, 1\}\,d_{y,z} \]
and 
\[\int_{\R^N}f_\infty((1-s)t_\infty (y\ast u_\infty))(z\ast u_\infty) dx \geq (1-s)t_\infty\min\{((1-s)t_\infty)^\theta, 1\}\,d_{y,z},\]
and therefore \eqref{eq:11} yields 
\begin{align}
  &J(\hat\m(w_\infty))-2c_\infty \le  J(\hat\m(w_\infty))- J_\infty(st_\infty u_\infty) - J_\infty((1-s)t_\infty u_\infty) \label{eq:15}\\
  &\le \Bigl[s(1-s)t_\infty^2\Bigl(1-\min\{(st_\infty)^\theta, 1\}-\min\{((1-s)t_\infty)^\theta, 1\}\Bigr)
  + \kappa_8 R^{-\frac{N-1}{2}}\Bigr] d_{y,z}. \nonumber 
\end{align}
We now claim that there is some $R_1\geq \frac32 S_1$ and some
$\kappa_9>0$ such that
\begin{equation} \label{eq:16}
  s(1-s)t_\infty^2\Bigl(1-\min\{(st_\infty)^\theta, 1\}-\min\{((1-s)t_\infty)^\theta, 1\}\Bigr) <-\kappa_9
\end{equation}
for all $s \in[\delta_0, 1-\delta_0]$, $R\geq R_1$
and $y,z$ satisfying \eqref{eq:7}. For this we consider an arbitrary
sequence $((s_k,y_k,z_k))_k\subset[\delta_0,1-\delta_0]\times\R^N\times\R^N$
such that $\di{y_k,z_k} \to \infty$ and $s_k\to s\in[\delta_0,1-\delta_0]$ as $k\to\infty$. According to Lemma
\ref{lem:S2} we have $\llim_{k\to\infty}t_\infty(s_k,y_k,z_k)=T$ with $T=T(s)$ given by
\eqref{eqn:t_tilde}. Note that $T>0$ by
\eqref{eqn:t_bounded_limit}, and
$\min\{sT,(1-s)T\}\geq \delta_0T>0$. Moreover, $s(1-s)T^2 \ge
(\delta_0T)^2>0$, and \eqref{eqn:t_tilde} implies
$\max\{sT,(1-s)T\}\geq 1$. Consequently
\[s(1-s)T^2\Bigl(1-\min\{(sT)^\theta, 1\}-\min\{((1-s)T)^\theta, 1\}\Bigr) <0,\]
and this shows that \eqref{eq:16} holds for all
$s \in[\delta_0, 1-\delta_0]$, $R\geq R_1$
and $y,z$ satisfying \eqref{eq:7}, where $R_1\geq \frac32 S_1$ and
$\kappa_9>0$ are suitable constants. 
Going back to \eqref{eq:15}, we conclude that  
\[J(\hat\m(w_\infty)) \leq 2c_\infty -  [\kappa_9-\kappa_8 R^{-\frac{N-1}{2}}] d_{y,z},\]
for these values of $R,y,z$ and $s$, and the right hand side of
this inequality is smaller than $2c_\infty$ for $R$ large
enough. Together with \eqref{eq:17} this finishes the proof. 
\end{proof}

We conclude this preparatory section by describing the behavior of the Palais-Smale sequences taken from $\mM$, and show
that the result of Bahri and Lions \cite[Proposition II.1]{BaLions97} (see also \cite[Theorem 8.4]{WILLEM}) holds in our context.
\begin{lem}\label{lem:Benci-Cerami}
  Let $(u_k)_k\subset\mM$ be a sequence for which $(J(u_k))_k$ is bounded and $J'(u_k)\to0$ as $k\to\infty$ holds.
  Then, there exist $\ell\in\N\cup\{0\}$, $(x_k^i)_k\subset\R^N$, $1\leq i\leq \ell$, and $\overline{u}, w_1,\ldots, w_\ell\in E$ satisfying
  (up to a subsequence)
  \begin{itemize}
    \item[(i)] $J'(\overline{u})=0$,
    \item[(ii)] $J_\infty'(w_i)=0$, $i=1, \ldots, \ell$,	
    \item[(ii)] $|x_k^i|\to\infty$ and $|x_k^i-x_k^j|\to\infty$ as $k\to\infty$ for $1\leq i\neq j\leq \ell$,
    \item[(iii)] $\bigl\|u_k-[\overline{u}+\suml_{i=1}^\ell x_k^i\ast w_i]\bigr\|\to 0$ as $k\to\infty$,
    \item[(iv)] $J(u_k) \to J(\overline{u})+\suml_{i=1}^\ell J_\infty(w_i),$ as $k\to\infty$.
  \end{itemize}
\end{lem}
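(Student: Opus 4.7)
The plan is to carry out the classical iterative bubble-extraction scheme in the spirit of Benci--Cerami, using the nonlinear splitting property of the appendix to decompose the energy and its derivative along weakly convergent subsequences without any Lipschitz or Ambrosetti--Rabinowitz hypothesis on $f$.

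First I would apply Lemma~\ref{lem:coercivity} to deduce that $(u_k)$ is bounded in $E$, so after passing to a subsequence $u_k \rightharpoonup \overline u$ weakly in $E$. The weak sequential continuity of $J'$, which rests on (F1)--(F2) and local compactness of Sobolev embeddings, yields $J'(\overline u)=0$, giving (i). If $u_k \to \overline u$ strongly, the conclusion holds with $\ell=0$; otherwise I set $v_k^1 := u_k - \overline u$, so that $v_k^1 \rightharpoonup 0$ while $\liminf_{k}\|v_k^1\|>0$. The appendix splitting lets me write $J(u_k) = J(\overline u)+J(v_k^1)+o(1)$ and $J'(u_k)=J'(\overline u)+J'(v_k^1)+o(1)$ in $E^\ast$; using (A1) and the fact that $v_k^1 \to 0$ in $L^p_{\mathrm{loc}}$ for $2\le p<2^\ast$, I can further replace $(a,F)$ by $(a_\infty,F_\infty)$ modulo $o(1)$, so that $v_k^1$ behaves as an approximate Palais--Smale sequence for $J_\infty$.

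Next I would invoke Lions' concentration lemma for $v_k^1$: either $v_k^1\to 0$ in $L^p(\R^N)$, which combined with the splitting and (F1)--(F2) forces $v_k^1\to 0$ in $E$ and contradicts non-vanishing, or there is $(x_k^1)\subset\R^N$ with $\int_{B_1(0)}((-x_k^1)\ast v_k^1)^2\,dx\ge\delta>0$. Since $v_k^1\rightharpoonup 0$, the sequence $(x_k^1)$ must escape every bounded set, so $|x_k^1|\to\infty$ along a further subsequence, and $(-x_k^1)\ast v_k^1\rightharpoonup w_1\ne 0$. Passing to the limit in the translated equation and using $a(\cdot+x_k^1)\to a_\infty$ and $f(\cdot+x_k^1,\cdot)\to f_\infty$ from (A1), I would conclude $J_\infty'(w_1)=0$. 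Iterating on $v_k^{j+1}:=v_k^j - x_k^j\ast w_j$, which by construction satisfies $(-x_k^j)\ast v_k^{j+1}\rightharpoonup 0$, either yields a remainder tending strongly to $0$ or produces a further profile $(x_k^{j+1},w_{j+1})$ with $|x_k^{j+1}|\to\infty$ and $w_{j+1}\ne 0$. The mutual divergence $|x_k^i-x_k^j|\to\infty$ for $i<j$ then follows: were $x_k^j-x_k^i$ bounded along a subsequence, translating by $-x_k^i$ would force $(-x_k^i)\ast v_k^j$ to have a nonzero weak limit, contradicting $(-x_k^i)\ast v_k^j\rightharpoonup 0$.

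Termination holds because every $w_j$ is a nontrivial critical point of $J_\infty$, hence lies in $\mM_\infty$, and so $J_\infty(w_j)\ge c_\infty>0$; the iterated energy identity $J(u_k)=J(\overline u)+\sum_{j=1}^\ell J_\infty(w_j)+o(1)$ together with $\sup_k J(u_k)<\infty$ then caps $\ell$ uniformly. Once the process stops, (iii) follows from the telescoping relation $u_k=\overline u+\sum_{j=1}^\ell x_k^j\ast w_j + v_k^{\ell+1}$ with $v_k^{\ell+1}\to 0$ in $E$, and (iv) from the iterated energy split. The main obstacle will be justifying the nonlinear splitting of $J$ and $J'$ along $u_k=\overline u+v_k^1$ and its iterates without Lipschitz or Ambrosetti--Rabinowitz bounds on $f$; this is exactly the delicate content of the appendix, adapted from \cite{AckWeth05}, and is what makes the argument go through for the wider class of nonlinearities considered here.
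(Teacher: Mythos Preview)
Your proposal is correct and follows essentially the same route as the paper: coercivity for boundedness, the appendix splitting (Proposition~\ref{prop:decomposition}) to transfer the Palais--Smale property from $J$ to $J_\infty$ on the remainders, Lions' lemma for profile extraction, and termination via $J_\infty(w_j)\ge c_\infty>0$. The only detail worth adding is that Proposition~\ref{prop:decomposition} requires $|\overline{u}(x)|\to 0$ as $|x|\to\infty$, which the paper obtains from $J'(\overline{u})=0$ via \cite{PANKOV08} (and likewise for each $w_j$ in the iteration).
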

\begin{proof}
From Lemma \ref{lem:coercivity}, $(u_k)_k$ is a bounded sequence in $E$. Up to a subsequence, we may assume 
$u_k\rightharpoonup \overline{u}$ for some $\overline{u}\in E$ and $J(u_k)\to d$ as $k\to\infty$. 
Since $J'$ is weakly sequentially continuous we obtain $J'(\overline{u})=0$.

\textbf{Step 1}: Let $v_k^1:=u_k-\overline{u}$ for all $k\in\N$. Since $v_k^1\rightharpoonup 0$ in $E$ and  
  $a(x)\to a_\infty$ for $|x|\to\infty$, the compactness 
  of the embedding $H^1(B_R(0))\hookrightarrow L^2(B_R(0))$ for all $R>0$ implies
  \begin{equation}\label{eq:int:ainf} 
    \int_{\R^N}(a(x)-a_\infty) |v_k^1|^2\, dx \lra 0, \quad\text{ as }k\to\infty.
  \end{equation}
  Moreover, from (A1) and (F2), it follows that
  \begin{equation}\label{eq:int:qinf} 
    \int_{\R^N}F(x,v_k^1)-F_\infty(v_k^1)\, dx \lra 0, \quad\text{ as }k\to\infty.
  \end{equation}
  Consequently, as $k\to\infty$, there holds
  $J_\infty(v^1_k)=J(v^1_k) + o(1)$
  \[= J(u_k) - J(\overline{u}) +\int_{\R^N}[F(x,u_k)-F(x,\overline{u})-F(x,v_k^1)]\, dx + o(1)
    = J(u_k)-J(\overline{u})+o(1),\]
  where the last step follows from Proposition \ref{prop:decomposition}. (Remark that $|\overline{u}(x)|\to 0$ as $|x|\to\infty$, since
  $J'(\overline{u})=0$. See \cite[Lemma 1]{PANKOV08}.)
  For every $\varphi\in E$, we have furthermore
  \begin{align*}
    J_\infty'(v^1_k)\varphi &= \int_{\R^N}\nabla v_k^1\cdot\nabla \varphi + a_\infty v^1_k \varphi\, dx - \int_{\R^N} f_\infty(v^1_k)\varphi\, dx\\
    &= J'(u_k)\varphi - J'(\overline{u})\varphi +\int_{\R^N}(a_\infty-a(x))v_k^1 \varphi\, dx + \int_{\R^N} [f(x,v_k^1)-f_\infty(v_k^1)]\varphi\, dx\\
    &+\int_{\R^N} [ f(x,u_k)-f(x,\overline{u})-f(x,v_k^1)]\varphi\, dx.
  \end{align*}
  Since $\llim_{k\to\infty}J'(u_k)= 0$ in $H^{-1}$ and $J'(\overline{u})=0$, similar arguments as above (using again Proposition \ref{prop:decomposition}) imply
  \[J'_\infty(v^1_k)\to 0 \;\text{ in }H^{-1},\quad \text{ as }k\to\infty.\]

\textbf{Step 2}: Let \[\zeta:=\limsup_{k\to\infty}\left(\sup_{y\in\R^N}\int_{B_1(y)}|v_k^1|^2\, dx\right).\]
  If $\zeta=0$, then \cite[Lemma II.1]{LIONS84_2} gives $\|v_k^1\|_{L^p}\to 0$ as $k\to\infty$, and from
  \begin{align*}
    \|v_k^1\|^2 &\leq C \int_{\R^N} |\nabla v_k^1|^2 + a_\infty (v_k^1)^2\, dx = C \{ J_\infty'(v_k^1)v_k^1 + \int_{\R^N} f_\infty(v_k^1)v_k^1\, dx\},\\
    &\leq C\{ J_\infty'(v_k^1)v_k^1 + \eps\|v_k^1\|_{L^2}^2+C_\eps\|v_k^1\|_{L^p}^p\}\quad \text{ for all }\eps>0,
  \end{align*}
  we obtain $\|v_k^1\|\to 0$, and hence $u_k\to \bar{u}$ in $E$, as $k\to\infty$, and the proof is complete.
  
  On the other hand, if $\zeta>0$, then, passing to a subsequence, we can find a sequence $(x_k^1)_k\subset\R^N$ satisfying
  $|x_k^1|\to\infty$ as $k\to\infty$ and
  \[\int_{B_1(0)} [(-x_k^1)\ast v_k^1]^2\, dx=\int_{B_1(x_k^1)}(v_k^1)^2\, dx > \frac{\zeta}{2}\]
  for all $k$. Since $((-x_k^1)\ast v_k^1)_k$ is bounded in $E$, passing to a further subsequence, we find 
  $(-x_k^1)\ast v_k^1\rightharpoonup w_1\neq 0$, using the compactness of the embedding $H^1(B_1(0))\hookrightarrow L^2(B_1(0))$.
  Since $J_\infty'$ is weakly sequentially continuous and invariant under translation, there holds $J_\infty'(w_1)
  =\llim_{k\to\infty}J_\infty'((-x_k^1)\ast v_k^1)=\llim_{k\to\infty}J_\infty'(v_k^1)=0\in H^{-1}$.

  Setting $v_k^2:= v_k^1-(x_k^1\ast w_1)$, we obtain $v_k^2\rightharpoonup 0$, and the same arguments as above, applied to $J_\infty$, give
  \[J_\infty(v_k^2)= J_\infty(v_k^1) -J_\infty(w_1)+o(1)=J(u_k)-J(\bar{u})-J_\infty(w_1)+o(1),\]
  $J_\infty'(v_k^2)\to 0$ in $H^{-1}$ and $\|v_k^2\|^2 = \|u_k\|^2 -\|\bar{u}\|^2-\|w_1\|^2 + o(1)$ as $k\to\infty$.

Iterating this procedure, we construct sequences $(x_k^i)_k\subset\R^N$ such that $|x_k^i|\to\infty$ and $|x_k^i-x_k^j|\to\infty$ for all $i\neq j$, as $k\to\infty$,
and critical points $w_i$ of $J_\infty$ such that 
$J_\infty(u_k-\overline{u}-\suml_{j=1}^ix_k^j\ast w_j)= J(u_k)-J(\bar{u})-\suml_{j=1}^i J_\infty(w_j)+o(1)$, and $J_\infty'(u_k-\overline{u}-\suml_{j=1}^ix_k^j\ast w_j)\to 0$ in $H^{-1}$ 
as $k\to\infty$.
Since $J_\infty(w)\geq c_\infty>0$ holds for every critical point $w$ of $J_\infty$,
and since $(J(u_k))_k$ is bounded, the procedure has to stop after a finite number of steps.
\end{proof}

\subsection{The proof of Theorem \ref{thm:exist}}
Suppose all the assumptions of Theorem \ref{thm:exist} are satisfied. We shall prove the existence
of a nontrivial solution to \eqref{eqn:nls_f} in three steps. First note that $c\leq c_\infty$ holds, by Remark \ref{remq:A_infty}.
If $c<c_\infty$ then Proposition \ref{prop:c<cinf_f} gives the desired conclusion. Hence, we can assume that $c=c_\infty$ holds.

Now, consider the barycenter function
$\beta$: $E\backslash\{0\}$ $\to$ $\R^N$ given by
\[\beta(u) = \frac{1}{\|u\|_{L^p}^p}\int_{\R^N}\frac{x}{|x|} |u(x)|^p\, dx, \quad u\in E\backslash\{0\}.\]
This function is continuous on $E\backslash\{0\}$ and uniformly continuous on the bounded subsets of 
$E\backslash\{u\in E\, :\, \|u\|_{L^p}<r\}$ for every $r>0$. Moreover, $|\beta(u)|<1$ for every $u\neq 0$. 
For $b\in B_1(0)\subset\R^N$ we set
\[ I_b:=\inf_{\substack{u\in\mM\\ \beta(u)=b}}J(u)=\inf_{\substack{v\in\mS^+\\ \beta(\m(v))=b}}\Psi(v) \geq c.\]
We claim that if $c=I_b$ for some $|b|<1$, then $J$ has a nontrivial critical point, i.e., \eqref{eqn:nls_f}
has a nontrivial solution.

Indeed, let $(v_k)_k\subset\mS^+$ with $\beta(\m(v_k))=b$ for all $k\in\N$ be a minimizing sequence for $I_b$, i.e.,
$\llim_{k\to\infty}\Psi(v_k)=I_b=c$. For each $k\in\N$, choose $\delta_k>0$ such that $|\beta(\m(v))-\beta(\m(v_k))|<\frac{1-|b|}{2}$ 
holds for every $v\in\mS^+$ with $\|v-v_k\|\leq 2\delta_k$.
According to Ekeland's variational principle 
(see \cite[Theorem 8.5]{WILLEM}), we can find some $w_k\in\mS^+$ satisfying $c=I_b\leq\Psi(w_k)\leq I_b+\frac2k$, $\|w_k-v_k\|\leq 2\delta_k$
and $\|\Psi'(w_k)\|_\ast\leq \frac8k$.
Setting $u_k:=\m(w_k)$ for all $k\in\N$, we obtain a Palais-Smale sequence $(u_k)_k\subset\mM$ for $J$ at level $I_b=c$ with the additional
property that $|\beta(u_k)|\leq\frac{1+|b|}{2}<1$ for all $k\in\N$. Remark that by Lemma \ref{lem:coercivity} $(u_k)_k$ is a bounded
sequence. Hence, the estimates \eqref{eqn:estim_f_eps}, together with the fact that $u_k\in\mM$ for all $k$, imply 
$\infl_{k\in\N}\|u_k\|_{L^p}>0$.

Suppose by contradiction that there is no $\overline{u}\in E\backslash\{0\}$ such that $J'(\overline{u})=0$. According to Lemma 
\ref{lem:Benci-Cerami} and the assumption $c=c_\infty$, we can find a sequence $(x_k)_k\subset\R^N$ such that $|x_k|\to\infty$ and 
$\|u_k-(x_k\ast u_\infty)\|\to 0$, as $k\to\infty$.
Noticing further that $\|x_k\ast u_\infty\|_{L^p}=\|u_\infty\|_{L^p}>0$ holds for all $k$, and $|\beta(x_k\ast u_\infty)|\to 1$ as $k\to\infty$, the
uniform continuity of $\beta$ gives
\[1=\lim_{k\to\infty}|\beta(x_k\ast u_\infty)|\leq \limsup_{k\to\infty}|\beta(u_k)|\leq\frac{1+|b|}{2}.\]
This contradicts our assumption $|b|<1$, and shows that there must exist some $\overline{u}\in E\backslash\{0\}$ such $J'(\overline{u})=0$.
From Lemma \ref{lem:Benci-Cerami}, it follows that $\llim_{k\to\infty}\|\m(w_k)-\overline{u}\|=0$, and thus, $J(\overline{u})=c$ holds with
$\beta(\overline{u})=b$. This proves the claim.

It remains to see what happens when $c=c_\infty<I_b$ holds for every $|b|<1$.
For $R>0$, let $y=(0,\ldots,0,R)\in\R^N$ and consider the open ball
\[\Omega_R:= B_{\frac43 R}(\textstyle{\frac{y}{3}})=\left\{ (1-s)y + sz\in\R^N\, :\, 0\leq s< 1, \, z\in\partial\Omega_R\right\}.\]
It has the following properties.
\begin{itemize}
  \item[(i)] $0, y\in\Omega_R$, $\frac23R \leq |y-z|\leq 2\min\{|y|,|z|\}=2R$ for all $z\in\partial\Omega_R$, and $|y-z|=2R$ if and 
  only if $z=-y$.

  \item[(ii)] For every $x\in\overline{\Omega}_R\backslash\{y\}$ there exists a unique $(s,z)\in(0,1]\times\partial\Omega_R$ satisfying
  $x=(1-s)y+sz$.

  \item[(iii)] For every $x\in\overline{\Omega}_R\backslash\{0\}$ there exists exactly one $(\tau,\zeta)\in(0,1]\times\partial\Omega_R$
  satisfying $x=\tau \zeta$. Moreover, $\tau$ is given by
  \begin{equation}\label{eqn:sigma}
    \tau(x)=\frac{1}{5R}\left[ \sqrt{15|x|^2+ x_N^2}- x_N\right],
  \end{equation}
  where $x=(x_1,\ldots, ,x_N)\in\ol{\Omega}_R\backslash\{0\}\subset\R^N$.
\end{itemize}
The function $g$: $\overline{\Omega}_R$ $\to$ $\overline{B_1(0)}$ given by
\[g(x)=\left\{\begin{array}{cc}\ds\tau(x)\frac{x}{|x|} & \text{if } x\neq 0 \\ 0 & \text{if }x=0\end{array}\right.\]
is a continuous bijection which satisfies 
$g(\partial \Omega_R)=\partial B_1(0)$. Furthermore, $g$ is smooth on $\Omega_R\backslash\{0\}$.
For $b=(0,\ldots,0,|b|)$ with $0<|b|<1$, we have $g(\frac{5R}{3} b)=b$ and 
$g'(\frac{5R}{3} b)=\frac{3}{5R}  id$. Thus $\text{deg}(g,\Omega_R,b)=1$.

We now define a min-max value as follows. Let $R\geq S_2$ where $S_2$ is given in Lemma \ref{lem:energy}, and consider 
$\gamma_0$: $\partial\Omega_R$ $\to$ $\mM$ given by
\[\gamma_0(z):=\hat\m(z\ast u_\infty)\quad \text{ for all }z\in\partial\Omega_R.\]
We set $\ds\Gamma_R:=\{ \gamma\,:\, \overline{\Omega}_R \to \mM\, :\, \gamma\text{ continous and } \gamma|_{\partial\Omega_R}=\gamma_0\}$ and
\begin{equation}
  c_0:=\inf_{\gamma\in\Gamma_R}\max_{x\in\overline{\Omega}_R}J(\gamma(x)).
\end{equation}
We claim that for $b=(0,\ldots,0,|b|)$ with $0<|b|<1$ fixed, there holds $I_b\leq c_0< 2c_\infty$ for $R$ large enough.

To show the left-hand inequality, consider for each $\gamma\in\Gamma_R$ the homotopy 
$\eta$: $[0,1]\times\overline{\Omega}_R$ $\to$ $\overline{B_1(0)}$ given by
$\eta(\xi, x)= \xi \beta(\gamma(x)) + (1-\xi) g(x)$,  $0\leq \xi\leq 1$, $x\in\overline{\Omega}_R$.
Since $\gamma|_{\partial\Omega_R}=\gamma_0$ and $\xi\beta(\gamma_0(z))+(1-\xi) g(z)\to \frac{z}{|z|}$ uniformly
for $z\in\partial\Omega_R$ and $0\leq \xi\leq 1$, as $R\to\infty$, we obtain $b\notin \eta([0,1]\times\partial\Omega_R)$
for $R$ large enough.
The homotopy invariance of the degree then implies $\text{deg}(\beta\circ \gamma,\Omega_R,b)=\text{deg}(g,\Omega_R,b)=1$.
Using the existence property, we can therefore find some $x_b\in\Omega_R$ for which $\beta(\gamma(x_b))=b$, 
and this gives $I_b\leq J(\gamma(x_b))$. Since $\gamma\in\Gamma_R$ was arbitrarily chosen, we obtain $I_b\leq c_0$.
Lemma \ref{lem:energy} gives the second inequality, when we consider $\gamma_2\in\Gamma_R$ given by
\[\gamma_2((1-s)y + sz)= \hat\m((1-s)(y\ast u_\infty) + s(z\ast u_\infty))\qquad s\in[0,1],\; z\in\partial\Omega_R.\]
In particular, the min-max. level $c_0$ satisfies
\begin{equation}\label{eqn:crit_level}
  c=c_\infty< c_0 <2c_\infty
\end{equation}
for $R$ large enough.

We now wish to prove that $J$ has a (nontrivial) critical point at level $c_0$. For this, we note that
$\mS^+:=\{u\in E^+\, :\, \|u\|=1\}$ is a complete connected $C^1$-Finsler manifold, as a closed connected $C^1$-submanifold
of the Banach space $E^+$. Moreover, for $R\geq S_2$, the 
the family $\mF_R=\{(\m^{-1}\circ \gamma)(\overline{\Omega}_R)\subset\mS^+\, :\, \gamma\in\Gamma_R\}$ of compact subsets of $\mS^+$
is a homotopy-stable family with boundary $B_R:=(\m^{-1}\circ \gamma_0)(\partial\Omega_R)\subset\mS^+$, in the sense
of Ghoussoub \cite[Definition 3.1]{GHOU}.
Since $J(\gamma_0(z))$ converges to $c_\infty$ as $R\to\infty$, uniformly for $z\in\partial\Omega_R$, we have furthermore
\[\sup\Psi(B_R)= \max_{z\in\partial\Omega_R}J(\gamma_0(z))< c_0=\inf_{\gamma\in\Gamma_R}\max_{x\in\overline{\Omega}_R}J(\gamma(x))
  =\inf_{A\in\mF_R}\sup_{v\in A}\Psi(v)\]
for large $R$. Using the min-max. principle \cite[Theorem 3.2]{GHOU}, we can find a sequence 
$(v_k)_k\subset\mS^+$ such that 
$\Psi(v_k)\to c_0$ and $\|\Psi'(v_k)\|_{\ast} \to 0$ as $k\to\infty$. Consequently, the sequence $(\m(v_k))_k\subset\mM$ is a Palais-Smale
sequence for $J$ at level $c_0$. Now, any sign-changing critical point $w$ of $J_\infty$ satisfies $J_\infty(w)>2 c_\infty$ 
(see e.g. \cite[Lemma 2.4]{AckWeth05}). Hence, the estimate \eqref{eqn:crit_level}, the uniqueness of the positive solution of \eqref{eqn:nls_f_inf}
together with Lemma \ref{lem:Benci-Cerami} imply that, up to a subsequence, $\m(v_k)\to \overline{u}$ as $k\to\infty$ for some 
$\overline{u}\in E\backslash\{0\}$ which satisfies $J'(\overline{u})=0$ and $J(\overline{u})=c_0$. This concludes the proof.
$\qed$

\section{Existence of a ground-state solution}
This section is devoted to the existence of a ground-state solution of \eqref{eqn:nls_f} under the conditions of Theorem \ref{thm:gs}.
Therefore, we assume from now on, that $a$ and $f$ satisfy (A1), (F1)--(F4). If $E^0\neq\{0\}$ we suppose, in addition, that 
\eqref{eqn:theta_delta} holds. The proof of Theorem \ref{thm:gs} relies upon Proposition \ref{prop:c<cinf_f} and a similar energy estimate as before
(compare Lemma \ref{lem:S2} and Lemma \ref{lem:h_t_cutoff_f_zero}). This time we shall consider the translate of one ground-state of the limit equation
\eqref{eqn:nls_f_inf} together with a cutoff argument. The latter is well-suited to our setting, since through \eqref{eqn:A2_gs_1} and \eqref{eqn:A2_F_gs_2}
we only control the behavior of $a$ and $F$ respectively, for large $|x|$. We do not have (nor do we require) any information about what happens elsewhere.

For the remainder of this section, we choose some ground-state solution $u_\infty$ of \eqref{eqn:nls_f_inf}.
Our hypotheses ensure that $u_\infty\in H^1(\R^N)\cap\mC(\R^N)$ satisfies either $u_\infty>0$ or $u_\infty<0$ on $\R^N$.
Furthermore, for every $\eps>0$ there exists $C_\eps>0$ such that
\begin{equation}\label{eqn:decay_u_infty_eps}
  |u_\infty(x)|\leq C_\eps e^{-(1-\eps)\sqrt{a_\infty}|x|} \qquad \text{for all }x\in\R^N
\end{equation}
(see \cite{PANKOV08} and \cite[Theorem C.3.5]{SIMON82}). Taking $\theta$ as in \eqref{eqn:theta_delta} if $E^0\neq\{0\}$ and
setting $\theta=0$ otherwise, we fix some $0<\eps<\min\{1-\frac{\alpha(1+\theta)}{(2+\theta)},1-\sqrt{\frac{\alpha}{2}}\}$ and set $A_\eps:=(1-\eps)\sqrt{a_\infty}$.
Moreover, we consider a cut-off function $\chi\in\mC^\infty(\R^N)$, $0\leq \chi\leq 1$, such that 
$\chi(x)=1$ if $|x|\leq 1-\eps$ and $\chi(x)=0$ if $|x|\geq 1$. 

For $R>0$, we set
\[ u_\infty^R(x):=\chi(\frac{x}{R})u_\infty(x), \quad x\in\R^N.\]
Similar to \cite[Lemma 2]{CW04}, we have the following estimates as $R\to\infty$.
\begin{align}
  \int_{\R^N}\left| |\nabla u_\infty|^2 - |\nabla u_\infty^R|^2\right|\, dx &= O( e^{-2(1-\eps)A_\eps R}) \label{eqn:CW_1_f}\\
  \int_{|x|\geq R}|u_\infty|^s\, dx &= O(e^{-s A_\eps R}) \text{ for }s>0.\label{eqn:CW_2_f}
\end{align}
In the following it will be helpful to consider 
\begin{equation}\label{eq:18}
  R_y:=\gamma|y| \qquad \text{for $y \in \R^N \setminus \{0\}$ with some fixed $\gamma \in (0,1)$}. 
\end{equation}
We shall estimate the asymptotic behavior of some integrals involving the eigenfunctions and translates of the ground-state $u_\infty$.
\begin{lem}\label{lem:exp_cutoff_f_zero}
  For $\sigma, \nu>0$ and $0<\beta<\min\{\nu, \nu(1-\gamma)+\sigma\gamma\}$, 
  there exists $C>0$ such that
  \[\int_{\R^N}|y\ast u_\infty^{R_y}|^\sigma e^{-\nu A_\eps |x|}\, dx \leq C e^{-\beta A_\eps |y|}\]
  holds for all $y\in\R^N$. Moreover, if $\sigma>\nu$, the conclusion also holds when $\beta=\nu$.
\end{lem}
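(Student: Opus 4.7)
The plan is to substitute $z=x-y$ in the integral, exploit the support property of the cut-off translate, and then use the pointwise exponential decay of $u_\infty$ to reduce the bound to a purely scalar estimate.

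First I would unwind the definition: by construction of $\chi$, the function $u_\infty^{R_y}$ is supported in $\{|x|\le R_y\}$, so $y\ast u_\infty^{R_y}$ is supported in $\{|x-y|\le R_y=\gamma|y|\}$. After the change of variable $z=x-y$, the integral becomes
\[\int_{|z|\le R_y} |u_\infty^{R_y}(z)|^\sigma e^{-\nu A_\eps|z+y|}\,dz.\]
Next I would invoke the exponential decay \eqref{eqn:decay_u_infty_eps}, which (for the fixed $\eps$ introduced earlier in the section) gives a constant $C>0$ with $|u_\infty^{R_y}(z)|\le C e^{-A_\eps|z|}$, and the triangle inequality $|z+y|\ge |y|-|z|$ to factor out $e^{-\nu A_\eps|y|}$. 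The integral is then controlled by
\[C^\sigma e^{-\nu A_\eps|y|}\int_{|z|\le R_y}e^{(\nu-\sigma)A_\eps|z|}\,dz.\]

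The last step is to evaluate this radial integral according to the sign of $\nu-\sigma$. If $\sigma>\nu$, the integral is bounded independently of $R_y$, which yields the bound $C' e^{-\nu A_\eps|y|}$ and proves the final (improved) statement with $\beta=\nu$. If $\sigma=\nu$, the integral is bounded by $C''R_y^N=O(|y|^N)$, which can be absorbed into $e^{-(\nu-\beta)A_\eps|y|}$ for any $\beta<\nu$. If $\sigma<\nu$, I would estimate the integral by $C'''|y|^{N-1}e^{(\nu-\sigma)A_\eps\gamma|y|}$, producing a total exponential rate $-A_\eps|y|[\nu-\gamma(\nu-\sigma)]=-A_\eps|y|[\nu(1-\gamma)+\sigma\gamma]$; the polynomial prefactor is then absorbed by enlarging the constant, at the price of requiring strict inequality $\beta<\nu(1-\gamma)+\sigma\gamma$.

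The net exponential rate in all three cases is $\min\{\nu,\nu(1-\gamma)+\sigma\gamma\}$, giving precisely the admissible range of $\beta$ stated in the lemma, with strict inequality needed exactly when a polynomial prefactor appears (i.e.\ when $\sigma\le\nu$), and equality $\beta=\nu$ permitted in the subcritical regime $\sigma>\nu$. There is no substantive obstacle here beyond book-keeping of these three cases; the crucial input is simply that the cut-off forces $|x|\ge(1-\gamma)|y|$ on the support, so that the rapid decay of $u_\infty$ together with the weight $e^{-\nu A_\eps|x|}$ cooperates in the expected way.
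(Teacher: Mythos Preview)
Your proof is correct and follows essentially the same approach as the paper: change of variables $z=x-y$, use the support property of the cut-off together with the pointwise decay $|u_\infty(z)|\le C e^{-A_\eps|z|}$, apply the triangle inequality to $|z+y|$, and split into cases. The only cosmetic difference is that the paper organises the case distinction according to the sign of $\beta-\sigma$ (after first multiplying through by $e^{\beta A_\eps|y|}$), whereas you split on the sign of $\nu-\sigma$; the resulting estimates and admissible range of $\beta$ coincide.
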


\begin{proof}
Let $0<\beta<\min\{\nu, \nu(1-\gamma)+\sigma\gamma\}$ and consider $y\in\R^N$. There holds
\[ e^{\beta A_\eps |y|}\int_{\R^N}|y\ast u_\infty^{R_y}|^\sigma e^{-\nu A_\eps|x|}\, dx 
  \leq C \int_{\{|x|\leq\gamma|y|\}} e^{(\beta-\sigma)A_\eps|x|} e^{-(\nu-\beta)A_\eps|x+y|}\, dx,\]
using \eqref{eqn:decay_u_infty_eps}. Now, if $\beta\geq\sigma$, we can write
\begin{align*}
\int_{\{|x|\leq\gamma|y|\}} e^{(\beta-\sigma)A_\eps|x|} e^{-(\nu-\beta)A_\eps|x+y|}\, dx
  &\leq C |y|^N e^{(\beta-\sigma)A_\eps\gamma|y|} e^{-(\nu-\beta)A_\eps(1-\gamma)|y|}\\
  &= C |y|^N e^{-(\nu(1-\gamma)+\sigma\gamma-\beta)A_\eps|y|}.
\end{align*}
The assumption $\beta<\nu(1-\gamma)+\sigma\gamma$ then gives the desired result.

On the other hand, if $\beta<\sigma$, we have
\[\int_{\{|x|\leq\gamma|y|\}} e^{(\beta-\sigma)A_\eps|x|} e^{-(\nu-\beta)A_\eps|x+y|}\, dx
  \leq e^{-(\nu-\beta)A_\eps(1-\gamma)|y|}\int_{\R^N}e^{-(\sigma-\beta)A_\eps|x|}\, dx,\]
and the conclusion follows.
\end{proof}

As a consequence of the preceding lemma, we obtain the following estimates for the integrals below.
\begin{equation}\label{eqn:exp_decay_cutoff_1}
  \int_{\R^N} |y\ast u^{R_y}_\infty|\, |e_i| \, dx \leq C e^{-A_\eps|y|} \quad\text{for all }y\in\R^N, 1\leq i\leq n+l,
\end{equation}
with some constant $C=C(\eps,\gamma)>0$.
Moreover, for any $0<\delta<(\sqrt{1+\frac{|\lambda_i|}{a_\infty}}-1)(1-\gamma)$, 
there exists $C=C(\eps,\gamma,\delta)>0$ satisfying
\begin{equation}\label{eqn:exp_decay_cutoff_2}
  \int_{\R^N} (y\ast u^{R_y}_\infty)\, |e_i| \, dx \leq C e^{-(1+\delta)A_\eps|y|} \quad\text{for all }y\in\R^N, 1\leq i\leq n.
\end{equation}

\begin{lem}\label{lem:h_t_cutoff_f_zero}
  \begin{itemize}
    \item[(i)] There exists $S_3>0$ such that $y\ast u^{R_y}_\infty\notin H^-\oplus H^0$ for all $|y|\geq S_3$.
    
    \item[(ii)] For $|y|\geq S_3$, let $t_y>0$, $h_y^1\in H^-$ and $h_y^2\in H^0$ satisfy 
    \[\hat\m(y\ast u^{R_y}_\infty)=t_y[y\ast u^{R_y}_\infty + h_y^1+h_y^2].\] Then, as $|y|\to\infty$, there holds
    $t_y\to 1$  and $\|h_y^1\|+\|h_y^2\|\to 0$.
    More precisely, there exists $C=C(\eps,\gamma)>0$ such that
    \begin{equation}\label{eqn:h1_cutoff_f_zero}
      \|h_y^1\| \leq C e^{-\frac{(2+\theta)}{2(1+\theta)}A_\eps|y|}
    \end{equation}
    and
    \begin{equation}\label{eqn:h2_cutoff_f_zero}
      \|h_y^2\|\leq C e^{-\frac{1}{1+\theta}A_\eps|y|}
    \end{equation}
    hold for all $|y|\geq S_3$.
  \end{itemize}
\end{lem}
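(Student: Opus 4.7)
The plan proceeds in three steps: part~(i) and the convergence statements in part~(ii) come from compactness arguments, while the exponential decay rates \eqref{eqn:h1_cutoff_f_zero}--\eqref{eqn:h2_cutoff_f_zero} are obtained from the Nehari-type orthogonality conditions at $\hat\m(y\ast u_\infty^{R_y})$. Throughout I will write $w_y := y\ast u_\infty^{R_y}$ and $\hat u_y := \hat\m(w_y)$, using $E^-,E^0$ for the spaces denoted $H^-,H^0$ in the statement.

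For part~(i), I evaluate the indefinite quadratic form $Q(u):=\int_{\R^N}(|\nabla u|^2 + a(x)u^2)\,dx$ on $w_y$. Using the cutoff estimates \eqref{eqn:CW_1_f}--\eqref{eqn:CW_2_f}, the translation invariance of $\int|\nabla\cdot|^2$, the convergence $a(\cdot+y)\to a_\infty$ from (A1), and dominated convergence, one obtains $Q(w_y)\to\int_{\R^N}(|\nabla u_\infty|^2+a_\infty u_\infty^2)\,dx>0$ as $|y|\to\infty$. Since $Q\le 0$ on $E^-\oplus E^0$, this forces $w_y\notin E^-\oplus E^0$ for $|y|$ large. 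The existence of $t_y>0$, $h_y^1\in E^-$, $h_y^2\in E^0$ is then immediate from Lemma~\ref{lem:Nehari_f}(ii). To see that $t_y\to 1$ and $\|h_y^1\|+\|h_y^2\|\to 0$, translate by $-y$: since $(-y)\ast w_y = u_\infty^{R_y}\to u_\infty$ in $H^1(\R^N)$, and (A1) lets $u\mapsto J(y\ast u)$ approximate $J_\infty$ on bounded sets, the decay of the eigenfunctions in \eqref{eqn:eigen_neg}--\eqref{eqn:eigen_zero} yields $(-y)\ast h_y^i\rightharpoonup 0$. The finite-dimensionality of $E^-\oplus E^0$ upgrades this to norm convergence, so the translated projection converges to the projection of $u_\infty$ onto $\mM_\infty$, which equals $u_\infty$ by Lemma~\ref{lem:Nehari_f}(ii) applied to $J_\infty$.

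For the refined estimate \eqref{eqn:h1_cutoff_f_zero}, the key input is the Nehari relation $J'(\hat u_y)(h_y^1+h_y^2)=0$ together with the monotonicity consequence of (F4), namely $f(x,u+v)v\ge f(x,u)v$ (cf.\ \eqref{eqn:h_tilde_infty_f}). The $E^0$ part of the quadratic form vanishes identically, the cross terms between $E^-$ and $E^0$ vanish by orthogonality, and $\int\nabla w_y\cdot\nabla h_y^2 + a\,w_y h_y^2\,dx=0$ since $(-\Delta+a)h_y^2=0$. What survives is
\[t_y\|h_y^1\|^2 \le t_y\Bigl|\int_{\R^N}\nabla w_y\cdot\nabla h_y^1 + a\,w_y h_y^1\,dx\Bigr| + \Bigl|\int_{\R^N}f(x,t_y w_y)(h_y^1+h_y^2)\,dx\Bigr|.\]
Since $t_y w_y$ is uniformly bounded in $L^\infty$, (F2) yields $|f(x,t_y w_y)|\le C\,t_y|w_y|$; expanding $h_y^1=\sum A_i e_i$ and $h_y^2=\sum B_j e_{n+j}$ with $|A_i|\le C\|h_y^1\|$, $|B_j|\le C\|h_y^2\|$, and invoking \eqref{eqn:exp_decay_cutoff_1}--\eqref{eqn:exp_decay_cutoff_2}, one reaches $\|h_y^1\|^2 \le C\bigl(\|h_y^1\|e^{-(1+\delta)A_\eps|y|} + \|h_y^2\|e^{-A_\eps|y|}\bigr)$. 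Once \eqref{eqn:h2_cutoff_f_zero} is known, \eqref{eqn:h1_cutoff_f_zero} follows by case analysis from $\frac12\bigl(1+\frac{1}{1+\theta}\bigr) = \frac{2+\theta}{2(1+\theta)}$.

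The remaining and hardest step is \eqref{eqn:h2_cutoff_f_zero}. Because $(-\Delta+a)h_y^2=0$, every quadratic piece cancels and the Nehari condition $J'(\hat u_y)h_y^2=0$ collapses to the purely nonlinear orthogonality $\int_{\R^N}f(x,\hat u_y)h_y^2\,dx=0$. Splitting $f(x,\hat u_y)=f(x,t_y w_y)+[f(x,\hat u_y)-f(x,t_y w_y)]$, the first part is controlled by $C\|h_y^2\|e^{-A_\eps|y|}$ via \eqref{eqn:exp_decay_cutoff_1}, but the second must be bounded below by a coercive quantity in $h_y^2$. This is the step where the strict monotonicity from (F4) together with the superlinear lower bound \eqref{eqn:theta_delta} becomes indispensable: exploiting them yields an inequality of the form $\|h_y^2\|^{2+\theta}\lesssim \|h_y^2\|e^{-A_\eps|y|}$, from which \eqref{eqn:h2_cutoff_f_zero} follows after division, and the exponent $\frac{1}{1+\theta}$ appears naturally. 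This degenerate, purely nonlinear coercivity argument is the main obstacle of the proof; the remainder of the lemma reduces to relatively standard eigenfunction-decay bookkeeping.
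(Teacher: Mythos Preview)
Your treatment of part~(i) and the qualitative convergence $t_y\to 1$, $\|h_y^j\|\to 0$ is essentially along the paper's lines, although you gloss over the a~priori boundedness of $t_y$ and $h_y^2$ (the paper proves $\sup_y t_y<\infty$ and $\sup_y\|h_y^2\|<\infty$ by a separate Fatou-type contradiction argument before passing to limits; your compactness sketch presupposes this). Your route to \eqref{eqn:h1_cutoff_f_zero}, conditional on \eqref{eqn:h2_cutoff_f_zero}, is also sound.

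The genuine gap is in your derivation of \eqref{eqn:h2_cutoff_f_zero}. You test $J'(\hat u_y)h_y^2=0$, obtain $\int f(x,\hat u_y)h_y^2\,dx=0$, and split $f(x,\hat u_y)=f(x,t_yw_y)+[f(x,\hat u_y)-f(x,t_yw_y)]$, claiming the bracket gives a coercive lower bound $\gtrsim\|h_y^2\|^{2+\theta}$ via (F4) and \eqref{eqn:theta_delta}. But the increment $\hat u_y-t_yw_y=t_y(h_y^1+h_y^2)$ involves $h_y^1$ as well, and monotonicity only tells you that $[f(x,\hat u_y)-f(x,t_yw_y)]$ has the sign of $h_y^1+h_y^2$, not of $h_y^2$. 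Where $h_y^1$ dominates $h_y^2$ with the opposite sign, the pointwise integrand $[f(x,\hat u_y)-f(x,t_yw_y)]h_y^2$ is negative, so no clean lower bound in $\|h_y^2\|$ alone emerges. Since at this stage you only control $\|h_y^1\|$ conditionally on $\|h_y^2\|$, you cannot simply absorb the cross term either.

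The paper sidesteps this by testing against $h_y^1+h_y^2$ jointly and, instead of your additive split, using the multiplicative identity
\[
\int_{\R^N} f(x,\hat u_y)(h_y^1+h_y^2)\,dx
=t_y\int_{\R^N}\frac{f(x,\hat u_y)}{\hat u_y}\,w_y(h_y^1+h_y^2)\,dx
+t_y\int_{\R^N}\frac{f(x,\hat u_y)}{\hat u_y}\,(h_y^1+h_y^2)^2\,dx.
\]
The last integral is manifestly nonnegative by (F4) and can be placed on the left together with $\|h_y^1\|^2$. Invoking \eqref{eqn:theta_delta} on it (and controlling the extra term $\int|w_y|^\theta(h_y^1+h_y^2)^2\,dx$ via Lemma~\ref{lem:exp_cutoff_f_zero}) produces
\[
\|h_y^1+h_y^2\|^{2+\theta}\le \tilde C\,\|h_y^1+h_y^2\|\,e^{-A_\eps|y|}+C'\|h_y^1+h_y^2\|^2 e^{-\frac{\theta}{1+\theta}A_\eps|y|},
\]
hence $\|h_y^1+h_y^2\|\le Ce^{-\frac{1}{1+\theta}A_\eps|y|}$, which gives \eqref{eqn:h2_cutoff_f_zero}. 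Then \eqref{eqn:h1_cutoff_f_zero} follows from the same joint inequality via $\|h_y^1\|^2\le \tilde C\|h_y^1+h_y^2\|e^{-A_\eps|y|}$. In short: the paper bounds the \emph{sum} first and only afterwards sharpens the $h_y^1$ estimate, whereas your scheme tries to decouple the two components and loses the needed sign control.
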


\begin{proof}
\begin{itemize}
  \item[(i)] Since $a(x)\to a_\infty$ as $|x|\to\infty$, and since $|x|\leq R_y$ implies $|x+y|\geq (1-\gamma)|y|$,
  the dominated convergence theorem, together with \eqref{eqn:CW_1_f} and \eqref{eqn:CW_2_f}, gives
  \begin{align}
    & \int_{\R^N} |\nabla(y\ast u_\infty^{R_y})|^2 + a(x) |y\ast u_\infty^{R_y}|^2\, dx \nonumber 
    =\int_{\R^N} |\nabla u_\infty^{R_y}|^2 + a(x+y) | u_\infty^{R_y} |^2\, dx \\
    &\qquad \lra \int_{\R^N}|\nabla u_\infty|^2+a_\infty|u_\infty|^2\, dx \quad \text{as }|y|\to\infty. \label{eqn:A_cutoff_f_zero}
  \end{align}
  Since $\int_{\R^N}|\nabla h|^2+a(x)|h|^2\, dx =-\|h\|^2\leq 0$ for all $h\in H^-\oplus H^0$, the conclusion follows
  from \eqref{eqn:A_cutoff_f_zero} and the fact that $0<2c_\infty\leq \int_{\R^N}|\nabla u_\infty|^2+a_\infty|u_\infty|^2\, dx$.

  \item[(ii)] Since $J'(t_y [y\ast u_\infty^{R_y}+ h^1_y+h^2_y])(t_y [y\ast u_\infty^{R_y}+ h^1_y+h^2_y])=0$ holds,
  we find
  \begin{align*}
    0< 2c &\leq 2J(t_y [y\ast u_\infty^{R_y}+ h^1_y+h^2_y])+2\int_{\R^N}F(x,t_y [y\ast u_\infty^{R_y}+ h^1_y+h^2_y])\, dx\\
    &= t_y^2 \int_{\R^N} |\nabla[y\ast u_\infty^{R_y}+h^1_y]|^2 + a(x) |y\ast u_\infty^{R_y}+h^1_y|^2\, dx\\
    &\leq t_y^2 \Bigl\{ \int_{\R^N}|\nabla y\ast u_\infty^{R_y}|^2+a(x)|y\ast u_\infty^{R_y}|^2\, dx 
    +2C\|u_\infty\|\, \|h^1_y\| -\|h^1_y\|^2\Bigr\}.
  \end{align*}
  From \eqref{eqn:A_cutoff_f_zero}, we deduce that
  \begin{equation}\label{eqn:h_bd_t_pos_zero}
    \sup_{|y|\geq S_3} \|h^1_y\|<+\infty\quad \text{ and } \quad \inf_{|y|\geq S_3}t_y>0.
  \end{equation}
  Furthermore, we claim that $\{t_y(y\ast u_\infty^{R_y}+h^1_y+h^2_y)\,:\,|y|\geq S_3\}$ is bounded, so that
  \begin{equation}\label{eqn:t_bd_zero}
    \sup_{|y|\geq S_3} t_y <+\infty\quad \text{ and }\quad \sup_{|y|\geq S_3} \|h^2_y\|<+\infty
  \end{equation}
  holds. Indeed, suppose by contradiction that (up to a subsequence) $\|t_y(y\ast u_\infty^{R_y}+h^1_y+h^2_y)\|\to\infty$
  as $|y|\to\infty$. Setting $w_y:=\frac{y\ast u_\infty^{R_y}+h^1_y+h^2_y}{\|y\ast u_\infty^{R_y}+h^1_y+h^2_y\|}
  =s_y(y\ast u_\infty^{R_y})+z_y^1+z_y^2$,
  we first note that $s_y$ stays bounded as $|y|\to\infty$, since $\|s_y(y\ast u_\infty^{R_y})^+\|\leq 1=\|w_y\|$
 and $\|(y\ast u_\infty^{R_y})^+\|\not\to 0$ as $|y|\to\infty$, according to \eqref{eqn:A_cutoff_f_zero}. Hence, $\dim(H^-\oplus H^0)<\infty$ implies
 that, up to a subsequence, $s_y\to s$, $z_y^1\to z_1$ and $z_y^2\to z_2$ as $|y|\to\infty$
  for some $s\geq 0$, $z_1\in H^-$ and $z_2\in H^0$.
  
  If $s\neq 0$, we obtain that
  \[ \|(-y)\ast w_y - su_\infty^{R_y} -(-y)\ast z_1 -(-y)\ast z_2\|\to 0\quad\text{ as }|y|\to\infty.\]
  Hence, $(-y)\ast w_y \rightharpoonup su_\infty\neq 0$ as $|y|\to\infty$, which implies that (up to a subsequence)
  $w_y(x+y)\to su_\infty(x)$ a.e. on $\R^N$, and since $w_y=\frac{t_y(y\ast u_\infty^{R_y}
  +h^1_y+h^2_y)}{\|t_y(y\ast u_\infty^{R_y}+h^1_y+h^2_y)\|}$ and $|u_\infty|>0$, 
  we obtain that for a.e. $x\in\R^N$, $|t_y[u_\infty^{R_y}(x)+h^1_y(x+y)+h^2_y(x+y)]|\to\infty$ as $|y|\to\infty$.
  But then, Fatou's Lemma gives
  \begin{align*}
    &\int_{\R^N}\frac{ |\nabla[y\ast u_\infty^{R_y}+h^1_y]|^2 + a(x) |y\ast u_\infty^{R_y}+h^1_y|^2}{
     \|[y\ast u_\infty^{R_y}+h^1_y+h^2_y]\|^2}\, dx \\
    &= \int_{\R^N}\frac{f(x,t_y[y\ast u_\infty^{R_y}+h^1_y+h^2_y])t_y[y\ast u_\infty^{R_y}+h^1_y+h^2_y]}{
     \|t_y[y\ast u_\infty^{R_y}+h^1_y+h^2_y]\|^2}\, dx \\
    &\geq 2\int_{\R^N}\frac{F(x+y,t_y[u_\infty^{R_y}(x)+h^1_y(x+y)+h^2_y(x+y)])}{\{t_y[u_\infty^{R_y}(x)
     +h^1_y(x+y)+h^2_y(x+y)]\}^2}(w_y(x+y))^2\, dx\\
    & \to\infty \quad \text{as }|y|\to\infty,   
  \end{align*}
  which contradicts the boundedness of $y\ast u_\infty^{R_y}+h_y^1$ and the fact that $\|(y\ast u_\infty^{R_y})^+\|\not\to 0$ as $|y|\to\infty$.
  On the other hand, if $s=0$, then $w_y\to z_1+z_2$ strongly in $H$, and hence $\|z_1+z_2\|=1\neq 0$ holds. The same argument
  as above gives (up to a subsequence) $|t_y[u_\infty^{R_y}(x-y)+h^1_y(x)+h^2_y(x)]|\to\infty$ as $|y|\to\infty$
  and, using Fatou's Lemma, we obtain again a contradiction. Therefore, the claim is proved and \eqref{eqn:t_bd_zero} holds.

  Now $J'(t_y[y\ast u_\infty^{R_y} +h^1_y +h^2_y])(h^1_y+h^2_y)=0$ implies
  \begin{equation}\label{eqn:estim_h1_h2_cutoff}
  \begin{split}
    0 &\leq \|h^1_y\|^2 + \int_{\R^N}\frac{f(x,t_y[y\ast u_\infty^{R_y} +h^1_y +h^2_y])}{t_y[y\ast u_\infty^{R_y} 
    +h^1_y +h^2_y]}(h^1_y+h^2_y)^2\, dx\\
    &=\int_{\R^N}\nabla(y\ast u_\infty^{R_y})\cdot\nabla h_y^1 + a(x)(y\ast u_\infty^{R_y})h_y^1\, dx\\
    & \quad\quad - \int_{\R^N}\frac{f(x,t_y[y\ast u_\infty^{R_y} +h^1_y +h^2_y])}{t_y[y\ast u_\infty^{R_y} +h^1_y 
     +h^2_y]}(y\ast u_\infty^{R_y})(h^1_y+h^2_y)\, dx.
  \end{split}
  \end{equation}
  Writing $h^1_y=\suml_{i=1}^n A_i^y e_i$ and $h^2_y=\suml_{i=n+1}^{n+l} A_i^y e_i$ and using
  the fact that $t_y[y\ast u_\infty^{R_y} +h^1_y +h^2_y]$ is bounded in $L^\infty(\R^N)$, we obtain
  \begin{equation}\label{eqn:h_tilde_y_f_zero}
   \begin{split}
     &\|h^1_y\|^2 + \int_{\R^N}\frac{f(x,t_y[y\ast u_\infty^{R_y} +h^1_y +h^2_y])}{t_y[y\ast u_\infty^{R_y} 
      +h^1_y +h^2_y]}(h^1_y+h^2_y)^2\, dx \\
     &\leq C \sum_{i=1}^{n+l} |A_i^y| \int_{\R^N} |y\ast u_\infty^{R_y}|\, |e_i|\, dx
     \leq \tilde C\, \|h^1_y  +h_y^2\| e^{-A_\eps|y|},
   \end{split}
  \end{equation}
  for some $\tilde C>0$, by \eqref{eqn:exp_decay_cutoff_1}, \eqref{eqn:h_bd_t_pos_zero}, \eqref{eqn:t_bd_zero} and 
  since all norms are equivalent on $H^-\oplus H^0$.
  In the case $E^0=\{0\}$, the lemma is proved, since $h^2_y=0$ holds.
  
  If $E^0\neq\{0\}$, we notice that (F4) and \eqref{eqn:theta_delta} imply 
  \[\inf\{\frac{|f(x,u)|}{|u|^{1+\theta}}\,:\,x\in\R^N,\, 0<|u|\leq r\}>0\]
  for all $r>0$. Choosing $r=\supl_{|y|\geq S_3}\|t_y[y\ast u_\infty^{R_y} +h^1_y +h^2_y]\|_\infty<\infty$, we obtain
  \begin{align*}
    \int_{\R^N}|h^1_y +h^2_y|^{2+\theta}\, dx &\leq \frac{\max\{1,2^{\theta-1}\}}{(\infl_{|y|\geq S_3}t_y)^\theta} \int_{\R^N}|t_y[y\ast u_\infty^{R_y} 
		 +h^1_y +h^2_y]|^\theta(h_y^1+h_y^2)^2\, dx\\
    &\quad\quad + \max\{1,2^{\theta-1}\}\int_{\R^N}|y\ast u_\infty^{R_y}|^\theta(h_y^1+h_y^2)^2\, dx\\
    &\leq C \int_{\R^N}\frac{f(x,t_y[y\ast u_\infty^{R_y} +h^1_y +h^2_y])}{t_y[y\ast u_\infty^{R_y} 
	+h^1_y +h^2_y]}(h^1_y+h^2_y)^2\, dx\\
    &\quad\quad +\max\{1,2^{\theta-1}\}\int_{\R^N}|y\ast u_\infty^{R_y}|^\theta(h_y^1+h_y^2)^2\, dx\\
    &\leq \tilde C \|h^1_y +h_y^2\| e^{-A_\eps|y|} + C' \|h^1_y+h_y^2\|^2 e^{-\frac{\theta}{1+\theta} A_\eps|y|}
  \end{align*}
  since $0<\frac{\theta}{1+\theta}<\min\{2, 2(1-\gamma)+\theta\gamma\}$, using Lemma \ref{lem:exp_cutoff_f_zero} and \eqref{eqn:h_tilde_y_f_zero}.

  Since all norms are equivalent on $E^-\oplus E^0$, the preceding estimate gives
  \begin{equation}\label{eqn:h1_f_zero}
    \|h_y^1+h_y^2\|\leq C e^{-\frac{1}{1+\theta}A_\eps|y|}\quad \text{for all }|y|\geq S_3.
  \end{equation}
  Combining this estimate with \eqref{eqn:h_tilde_y_f_zero}, we find
  \begin{equation}\label{eqn:h2_f_zero}
    \|h_y^1\|\leq C e^{-\frac{(2+\theta)}{2(1+\theta)}A_\eps|y|}\quad \text{for all }|y|\geq S_3.
  \end{equation}
  Hence, $\|h^1_y+h_y^2\|\to 0$ as $|y|\to\infty$ which, in turn, implies that
  \[\int_{\R^N} |\nabla[y\ast u_\infty^{R_y}+h^1_y]|^2 + a(x) [y\ast u_\infty^{R_y}+h^1_y]^2\, dx
    \to \int_{\R^N}|\nabla u_\infty|^2+a_\infty u_\infty^2\, dx,\]
  as $|y|\to\infty$. From \eqref{eqn:h_bd_t_pos_zero} and \eqref{eqn:t_bd_zero}, we can assume that (up to a subsequence) $t_y\to T>0$ as $|y|\to\infty$. 
  Then, we find
  \[0 = J'(t_y[y\ast u_\infty^{R_y}+h^1_y+h^2_y])t_y[y\ast u_\infty^{R_y}+h^1_y+h^2_y] \to J_\infty'(Tu_\infty)Tu_\infty,\]
  as $|y|\to\infty$, i.e., $T u_\infty\in\mM_\infty$. Since $u_\infty\in\mM_\infty$, it follows that $T=1$. This concludes the proof.
\end{itemize}
\end{proof}

\begin{remq}\label{remq:nonresonant}
 In the case $E^0=\{0\}$, we have $h_y^2=0$, and using \eqref{eqn:exp_decay_cutoff_2} instead of \eqref{eqn:exp_decay_cutoff_1} in
 \eqref{eqn:h_tilde_y_f_zero} in the proof above, we obtain a better estimate for the decay of $h^1_y$. Namely,
 for every $0<\delta<\min\{(\sqrt{1+\frac{|\lambda_i|}{a_\infty}}-1)(1-\gamma)\, :\, 1\leq i\leq n\}$,
 \begin{equation}\label{eqn:h1_cutoff_f}
   \|h_y^1\| \leq C e^{-(1+\delta)A_\eps|y|}
 \end{equation}
 holds for all $|y|\geq S_3$ with some $C=C(\eps,\gamma,\delta)>0$.
\end{remq}
We are now ready to prove Theorem \ref{thm:gs}. Let therefore conditions (A1), (F1)--(F4) and, if $\ker(-\Delta+a)=E^0\neq\{0\}$ holds,
\eqref{eqn:theta_delta} be satisfied.
\begin{proof}[Proof of Theorem \ref{thm:gs}]
As before, let
$0<\eps<\min\{1-\frac{\alpha(1+\theta)}{(2+\theta)},1-\sqrt{\frac{\alpha}{2}}\}$. In
case \eqref{eqn:F_gs_1}, we may also assume
\begin{equation}\label{eqn:remq_mu}
  \alpha<\mu<2 \qquad \text{and}\qquad \eps< \min\{1-\frac{\alpha}{\mu},1-\frac{\mu}{2}\}.  
\end{equation}
We fix $\gamma>0$ such that 
$\frac{\alpha}{2(1-\eps)^2}<\gamma<1$ and consider the
corresponding radii $R_y$ as defined in \eqref{eq:18}. We claim that
we can find $S\geq S_3$ such that
\begin{equation}\label{eqn:estim_J_cutoff}
  J(\hat\m(y\ast u_\infty^{R_y})) < c_\infty \qquad \text{for all $|y|\geq S$,}
\end{equation}
where $S_3$ is given by Lemma \ref{lem:h_t_cutoff_f_zero}.
The conclusion of the theorem then follows from Proposition \ref{prop:c<cinf_f}. Let us prove \eqref{eqn:estim_J_cutoff}. 
With the notation of  Lemma \ref{lem:h_t_cutoff_f_zero}, consider $|y|\geq S_3$ and $(t_y,h^1_y,h^2_y)\in (0,\infty)\times E^-\times E^0$ with
$\hat\m (y\ast u_\infty^{R_y})=t_y[y\ast u_\infty^{R_y}+h^1_y+h^2_y]$. There holds
\begin{align*}
  &J(\hat\m(y\ast u_\infty^{R_y})) = \frac{t_y^2}{2}\int_{\R^N} |\nabla u_\infty^{R_y}|^2+a_\infty|u_\infty^{R_y}|^2 dx 
  -\int_{\R^N} F_\infty(t_y u_\infty^{R_y})\, dx \\
  &\quad  +\frac{t_y^2}{2}\Bigl[ \int_{\R^N} |\nabla h^1_y|^2 + a(x) |h^1_y|^2\, dx
   + 2\int_{\R^N} \nabla(y\ast u_\infty^{R_y})\cdot\nabla h^1_y +  a(x) (y\ast u_\infty^{R_y})h^1_y\, dx \\
  &\quad + \int_{\R^N} (a(x+y)-a_\infty) |u_\infty^{R_y}|^2 dx\Bigr]
  - \Bigl[ \int_{\R^N} F(x+y,t_y u_\infty^{R_y})-F_\infty(t_y u_\infty^{R_y})\, dx \\
  &\quad\quad + \int_{\R^N} \{F(x,t_y[y\ast u_\infty^{R_y}+h^1_y+h_y^2])-F(x,t_y(y\ast u_\infty^{R_y}))\}\, dx  \Bigr].
\end{align*}    
In the following, we let $K_1,K_2,\dots$ denote positive constants
depending possibly on $S_3$, $\alpha$, $\theta$, $\eps$, $\gamma$, $\mu$ and $u_\infty$
but not on $y$. From \eqref{eqn:CW_1_f}, \eqref{eqn:CW_2_f} and the fact that $t_y$ remains bounded
as $|y|\to\infty$, it follows that
\begin{align*}
  &\frac{t_y^2}{2}\int_{\R^N} (|\nabla u_\infty^{R_y}|^2+a_\infty|u_\infty^{R_y}|^2)\, dx 
  - \int_{\R^N} F_\infty(t_y u_\infty^{R_y})\, dx\\
  &\leq J_\infty(t_y u_\infty) + \frac{t_y^2}{2}\int_{\R^N} \bigl(|\nabla u_\infty^{R_y}|^2-|\nabla u_\infty|^2\bigr)\, dx
  + \int_{\{|x|\geq (1-\eps)R_y\}} F_\infty(t_y u_\infty)\, dx\\
  &\leq c_\infty + K_1\, e^{-2(1-\eps)A_\eps R_y}
\end{align*}
for $y \in \R^N \setminus \{0\}$, using the fact that $J_\infty(t_y u_\infty)\leq J_\infty(u_\infty)=c_\infty$,
since $u_\infty\in\mM_\infty$ holds.

We now consider the case where assumption (a) of Theorem \ref{thm:gs}
is satisfied. Then \eqref{eqn:A2_gs_1} implies
\begin{align*}
  &\int_{\R^N} (a(x+y)-a_\infty) |u_\infty^{R_y}|^2\, dx
  =\int_{\{|x|\leq R_y\}} \!\!(a(x+y)-a_\infty) |u_\infty^{R_y}|^2\, dx\\
  &\qquad \leq -C_1 e^{-\alpha\sqrt{a_\infty} |y|}\!\int_{\{|x|\leq 1\}}\!\!\!e^{-\alpha\sqrt{a_\infty} |x|} u_\infty^2\, dx 
  \leq -K_2\, e^{-\alpha\sqrt{a_\infty}|y|}
\end{align*}
for $|y|\geq \max \bigl\{\frac{1}{\gamma},\frac{S_0}{1-\gamma}\bigr\}$. 
Furthermore, \eqref{eqn:F_gs_1} and Lemma \ref{lem:exp_cutoff_f_zero} together with \eqref{eqn:remq_mu} yield
\begin{align*}
  \int_{\R^N}&F(x+y,t_y u_\infty^{R_y})-F_\infty(t_y u_\infty^{R_y})\, dx 
  \geq -C_2 \int_{\R^N}\!\!\!\!e^{-\mu\sqrt{a_\infty} |x+y|} [|t_yu_\infty^{R_y}|^2+|t_yu_\infty^{R_y}|^p]\, dx \\
  &= -C_2 \int_{\R^N}e^{-\mu\sqrt{a_\infty} |x|} [|t_y (-y)*u_\infty^{R_y}|^2+|t_y (-y)*u_\infty^{R_y}|^p]\, dx 
  \geq - K_3\, e^{-\mu A_\eps|y|} 
\end{align*}
for $|y| \ge S_3$. If $E^0\neq\{0\}$, we obtain moreover
\begin{align*}
  &\int_{\R^N} \nabla(y\ast u_\infty^{R_y})\cdot\nabla h^1_y +  a(x) (y\ast u_\infty^{R_y})h^1_y\, dx\\
  &\qquad \leq |\lambda_1|\,\|h^1_y\| \max_{1\leq i\leq n}\int_{\R^N}|y\ast u_\infty^{R_y}|\, |e_i|\, dx
  \leq K_4\, e^{-\frac{(4+3\theta)}{2(1+\theta)}A_\eps|y|},
\end{align*}
and 
\begin{align*}
  &\int_{\R^N} \{F(x,t_y[y\ast u_\infty^{R_y}+h^1_y+h_y^2])-F(x,t_y(y\ast u_\infty^{R_y}))\}\, dx\\
  &\qquad \geq -\int_{\R^N} |f(x,t_y(y\ast u_\infty^{R_y})) t_y(h^1_y+h_y^2)|\, dx\\
  &\qquad \geq -K_5\|h^1_y+h_y^2\| \max_{1\leq i\leq n+l}\int_{\R^N}|y\ast u_\infty^{R_y}|\, |e_i|\, dx 
  \geq - K_6\,e^{-\frac{(2+\theta)}{1+\theta}A_\eps|y|},
\end{align*}
for $|y| \ge S_3$ as a consequence of \eqref{eqn:ineq_F_f}, \eqref{eqn:exp_decay_cutoff_1}, \eqref{eqn:h1_cutoff_f_zero} and \eqref{eqn:h2_cutoff_f_zero}.

On the other hand, when $E^0=\{0\}$, we fix 
\[0<\delta<\min\Bigl\{(\sqrt{1+\frac{|\lambda_i|}{a_\infty}}-1)(1-\gamma)\,:\, 1\leq i\leq n \Bigr\}\]
and use Remark \ref{remq:nonresonant} and
\eqref{eqn:exp_decay_cutoff_2} to estimate
\begin{align*}
  &\int_{\R^N} \nabla(y\ast u_\infty^{R_y})\cdot\nabla h^1_y +  a(x) (y\ast u_\infty^{R_y})h^1_y\, dx\\
  &\qquad \leq |\lambda_1|\,\|h^1_y\| \max_{1\leq i\leq n}\int_{\R^N}|y\ast u_\infty^{R_y}|\,|e_i|\, dx
  \leq K_7\, e^{-2(1+\delta)A_\eps|y|},
\end{align*}
and, since $h^2_y=0$ in this case,   
\begin{align*}
  &\int_{\R^N} \{F(x,t_y[y\ast u_\infty^{R_y}+h^1_y+h_y^2])-F(x,t_y(y\ast u_\infty^{R_y}))\}\, dx\\
  &\qquad \geq -\int_{\R^N} |f(x,t_y(y\ast u_\infty^{R_y})) t_y h^1_y|\, dx\\
  &\qquad \geq -K_8\,\|h^1_y\| \max_{1\leq i\leq n}\int_{\R^N} |y\ast u_\infty^{R_y}|\,|e_i|\, dx   \geq - K_9\,e^{-2(1+\delta)A_\eps|y|},
\end{align*}
Putting the previous estimates together and noting in addition that $$\int_{\R^N} |\nabla h^1_y|^2 + a(x) |h^1_y|^2\, dx\le 0,$$
we obtain, for $|y| \ge \max \{S_3,\frac{S_0}{1-\gamma},\frac{1}{\gamma}\}$,
\begin{align*}
  J(\hat\m(y\ast u_\infty^{R_y})) & \leq c_\infty + K_1\, e^{-2(1-\eps)A_\eps R_y}
  -K_2\,  e^{-\alpha\sqrt{a_\infty} |y|}+K_3\, e^{-\mu A_\eps |y|} \\ 
  & \quad + (K_4+K_6) e^{-\frac{(2+\theta)}{1+\theta}A_\eps|y|}
\end{align*}
in the case where $E^0\neq\{0\}$, and
\begin{align*}
  J(\hat\m(y\ast u_\infty^{R_y})) & \leq c_\infty + K_1 e^{-2(1-\eps)A_\eps R_y}
  -K_2\,  e^{-\alpha\sqrt{a_\infty} |y|}+K_3\, e^{-\mu A_\eps |y|} \\
  &\quad  +(K_7+K_9) e^{-2(1+\delta)A_\eps|y|}
\end{align*}
when $E^0=\{0\}$, respectively. Our choice of $\gamma$ implies $2(1-\eps)\gamma A_\eps>\alpha\sqrt{a_\infty}$, and our choice of $\eps$ gives
$\frac{(2+\theta)}{1+\theta}A_\eps>\alpha\sqrt{a_\infty}$ and $\mu A_\eps>\alpha\sqrt{a_\infty}$. Hence, the conclusion follows, since $K_2>0$.

\medskip

\noindent Next we consider the case of assumption (b) in Theorem
\ref{thm:gs}. By Lemma~\ref{lem:h_t_cutoff_f_zero} (ii),
we may choose $\eta>0$ such that 
\[\eta\leq\inf\bigl \{t_y |u_\infty(x)|\, :\, |y|\geq S_3,\: |x|\leq 1 \bigr\} 
  \leq \sup \bigl\{t_y |u_\infty(x)|\, :\, |y|\geq S_3,\: |x|\leq 1\bigr\} \le \frac{1}{\eta},\]
and we consider $C_\eta,S_\eta$ such that \eqref{eqn:A2_F_gs_2} holds
with this choice of $\eta$. Since by assumption $a(x)\leq a_\infty$
for all $|x|\geq S_0$, we obtain 
\[\int_{\R^N} (a(x+y)-a_\infty) |u_\infty^{R_y}|^2\, dx\leq 0 
\qquad \text{for $|y|\geq \frac{S_0}{1-\gamma}$.}\]
Moreover, 
\begin{align*}
  \int_{\R^N}&\!\! F(x+y,t_y u_\infty^{R_y})-F_\infty(t_y u_\infty^{R_y})\, dx \geq C_\eta e^{-\alpha\sqrt{a_\infty} |y|}\int_{\{|x|\leq 1\}} e^{-\alpha\sqrt{a_\infty} |x|}\, dx\\
  &= \tilde{C_\eta} e^{-\alpha\sqrt{a_\infty} |y|} \qquad
  \text{for $|y|\geq \max \bigl\{\frac{1}{\gamma},\frac{S_\eta}{(1-\gamma)}\bigr\}$ with a constant $\tilde{C_\eta}>0$.} 
\end{align*}
Together with the previous estimates, we find, for $|y| \ge \max\{S_3,\frac{S_0}{(1-\gamma)},\frac{S_\eta}{(1-\gamma)},\frac{1}{\gamma}\}$,
\[ J(\hat\m(y\ast u_\infty^{R_y})) \leq c_\infty + K_1\, e^{-2(1-\eps)A_\eps R_y}
-\tilde{C_\eta} e^{-\alpha\sqrt{a_\infty} |y|} +(K_4+K_6) e^{-\frac{(2+\theta)}{1+\theta}A_\eps|y|}\]
in the case where $E^0\neq\{0\}$, and
\[J(\hat\m(y\ast u_\infty^{R_y})) \leq c_\infty + K_1\, e^{-2(1-\eps)A_\eps R_y}
  -\tilde{C_\eta} e^{-\alpha\sqrt{a_\infty} |y|} +(K_7+K_9) e^{-2(1+\delta)A_\eps|y|}\]
when $E^0=\{0\}$, respectively. The conclusion follows as above from
the choice of $\gamma$ and $\eps$.
\end{proof}

\appendix
\section{A nonlinear splitting property}
The aim of this section is to prove the following result, which was
needed in the proof of Lemma \ref{lem:Benci-Cerami}. The proof is an adaptation
of an argument in \cite[Appendix]{AckWeth05} to our setting.  
Throughout this section, we assume (F1)-(F4).
\begin{prop}\label{prop:decomposition}
  Let $(u_k)_k\subset H^1(\R^N)$ and $\bar u\in H^1(\R^N)$ be such that $u_k\rightharpoonup \bar u$ weakly in $H^1(\R^N)$ and
  $|\bar u(x)|\to 0$ as $|x|\to\infty$. Then, as $k\to\infty$,
  \begin{align}
    &\int_{\R^N} F(x,u_k)-F(x,\bar u)-F(x,u_k-\bar u)\, dx \to 0, \\
    &\int_{\R^N}[f(x,u_k)-f(x,\bar u)-f(x,u_k-\bar u)]\varphi\, dx \to 0,
  \end{align}
  uniformly in $\|\varphi\|\leq 1$.
\end{prop}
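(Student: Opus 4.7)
The plan is to reduce both statements to showing $g_k := f(\cdot,u_k)-f(\cdot,\bar u)-f(\cdot,u_k-\bar u) \to 0$ in $H^{-1}(\R^N)$; the second conclusion is then immediate, while the first follows from the identity
\[
F(x,u_k)-F(x,u_k-\bar u)-F(x,\bar u) = \int_0^1 \bigl[f(x,u_k-\bar u+s\bar u)-f(x,s\bar u)\bigr]\bar u\, ds
\]
by running the same splitting argument in $L^1(\R^N)$ rather than $H^{-1}$. Passing to a subsequence via Rellich-Kondrachov and diagonal extraction, $u_k\to \bar u$ a.e.\ in $\R^N$, so $g_k\to 0$ a.e., and both $(u_k)$, $(u_k-\bar u)$ are bounded in $L^q$ for every $q\in[2,2^*)$ by Sobolev embedding.

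Given $\eta>0$, the strategy is a cutoff-splitting driven by the decay of $\bar u$ at infinity. I would choose $R=R(\eta)$ large enough that $|\bar u(x)|\le\delta_\eta$ on $B_R^c$ --- where $\delta_\eta$ is picked so that Lemma~\ref{lem:properties_f}(i) forces $|f(x,s)|\le\eta|s|$ for $|s|\le\delta_\eta$ --- and so that $\|\bar u\|_{L^2(B_R^c)}+\|\bar u\|_{L^p(B_R^c)}\le\eta$. On the interior $B_R$, the compact embedding $H^1(B_R)\hookrightarrow L^p(B_R)$ together with the standard continuity of the Nemytskii operator associated to $f$ (a consequence of (F1)) yields $\|g_k\|_{L^{p'}(B_R)}\to 0$, so H\"older's inequality and the Sobolev embedding $H^1\hookrightarrow L^p$ give $\sup_{\|\varphi\|\le 1}\bigl|\int_{B_R} g_k\varphi\,dx\bigr|\to 0$, while the parallel $L^1$ argument controls the $F$-integral on $B_R$.

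The main obstacle is bounding the exterior integral $\int_{B_R^c} g_k\varphi\,dx$ uniformly in $k$ and in $\|\varphi\|\le 1$ by a quantity tending to zero with $\eta$. Decomposing $g_k=[f(\cdot,u_k)-f(\cdot,u_k-\bar u)]-f(\cdot,\bar u)$, the second summand contributes at most $\eta\|\bar u\|_{L^2(B_R^c)}\|\varphi\|_{L^2}\le C\eta^2$ by construction. For the first summand, since $f$ is only continuous (with no Lipschitz or derivative bound available, this is the whole point of the appendix), smallness cannot be read off pointwise, and one must split $B_R^c$ according to the size of $u_k$ into three pieces: (i) $\{|u_k|\le\rho\}\cap B_R^c$ with $\rho\le\delta_\eta$ small, on which both $u_k$ and $u_k-\bar u$ are pointwise small and Lemma~\ref{lem:properties_f}(i) bounds the integrand by $\eta(|u_k|+|\bar u|)$ so H\"older yields $\le C\eta$; (ii) $\{\rho<|u_k|\le M\}\cap B_R^c$, whose measure is at most $\rho^{-2}\|u_k\|_{L^2}^2$ by Chebyshev, on which $u_k, u_k-\bar u$ lie in the compact $[-M-1,M+1]$ and the uniform continuity of $f$ on compact $u$-intervals (uniform in $x\in\R^N$ thanks to (A1)) produces a modulus $\omega_M(\delta_\eta)$ made arbitrarily small by shrinking $\delta_\eta$ after fixing $M,\rho$; (iii) $\{|u_k|>M\}\cap B_R^c$, where the $L^q$-bound of $(u_k)$ for some $q\in(p,2^*)$ gives $\int_{\{|u_k|>M\}}|u_k|^p\,dx\le M^{p-q}\|u_k\|_{L^q}^q\to 0$ uniformly in $k$, and (F1) then controls the contribution via H\"older. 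Choosing parameters in the order $\eta\to M\to\rho\to\delta_\eta\to R$ and collecting terms gives $\sup_{\|\varphi\|\le 1}|\int g_k\varphi\,dx|\le o_k(1)+C\eta$, and since $\eta$ is arbitrary the convergence follows.
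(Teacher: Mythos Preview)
Your strategy is the same as the paper's: split into $B_R$ (compact embedding plus Nemytskii continuity) and $B_R^c$, then on $B_R^c$ split by the magnitude of $|u_k|$ into small\,/\,intermediate\,/\,large regimes, handled respectively via (F2), uniform continuity of $f$ in $u$, and a Sobolev bound in an exponent $q>p$. The paper makes the intermediate step explicit by passing through $f_\infty$ (writing $f(x,u_k)-f(x,u_k-\bar u)$ as a telescoping sum through $f_\infty(u_k)$ and $f_\infty(u_k-\bar u)$); your phrase ``uniform in $x$ thanks to (A1)'' encodes the same argument. Your use of $q\in(p,2^\ast)$ in region~(iii) is in fact cleaner than the paper's direct use of $L^{2^\ast}$, which as written tacitly assumes $N\ge 3$.

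One point does not go through as stated. In your integral identity for the $F$-statement, the two arguments of $f$ are $u_k-\bar u+s\bar u$ and $s\bar u$, which differ by $u_k-\bar u$, \emph{not} by $\bar u$. Consequently, on the intermediate region~(ii) your uniform-continuity mechanism fails: the perturbation has size between $\rho-\delta_\eta$ and $M+\delta_\eta$, not $\le\delta_\eta$. One can still control regions~(ii) and~(iii) there by exploiting instead that the outer factor $|\bar u|$ is pointwise small on $B_R^c$ (bounding $|f|\le C_M$ on~(ii) together with Chebyshev for the measure, and using $\|\bar u\|_{L^p(B_R^c)}\le\eta$ on~(iii)), but this is a genuinely different estimate, so ``running the same splitting argument'' understates the work. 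The paper sidesteps this detour by estimating $|F(x,u_k)-F(x,u_k-\bar u)|$ directly on each region in parallel with the $f$-estimates, which is a bit more economical.
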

\begin{proof}
We set $\tilde C:=\sup\limits_k\|u_k\|<+\infty$ and fix some $\eps>0$. According to (F2), we can choose $0<s_0<1$ such that $|f(x,t)|\leq \eps |t|$
and hence $|F(x,t)|\leq \frac{\eps}{2}|t|^2$ holds for all $|t|\leq 2s_0$ and all $x\in\R^N$. Moreover, by assumption, there exists $R>0$ such that $|\bar u(x)|\leq s_0$
for all $|x|\geq R$ and $\int_{\R^N\backslash B_R(0)}|\bar u|^2\, dx<1$.

Hence, we obtain
\begin{align*}
  \int_{\R^N\backslash B_R(0)} |f(x,\bar u)|\, |\varphi|\, dx &\leq \eps (\int_{\R^N\backslash B_R(0)}|\bar u|^2\, dx)^{\frac12}\|\varphi\|_{L^2}<\eps \|\varphi\|_{L^2}\\
  \text{and }\int_{\R^N\backslash B_R(0)} |F(x,\bar u)|\, dx &\leq \frac{\eps}{2} \int_{\R^N\backslash B_R(0)}|\bar u|^2\, dx<\frac{\eps}{2}.
\end{align*}

Setting $\mU_k:=\{x\in\R^N\backslash B_R(0)\, : \,  |u_k(x)|\leq s_0\}$ for $k\in\N$, we find
\begin{align*}
  \int_{\mU_k} |f(x,u_k)-f(x,u_k-\bar u)|\, |\varphi|\, dx &\leq \eps (\|u_k\|_{L^2}+\|u_k-\bar u\|_{L^2})\|\varphi\|_{L^2}\leq 3\tilde{C}\eps \|\varphi\|_{L^2}\\
  \text{and }\int_{\mU_k} |F(x,u_k)-F(x,u_k-\bar u)|\, dx &\leq \frac{\eps}{2} (\|u_k\|_{L^2}^2+\|u_k-\bar u\|_{L^2}^2)\leq \frac{5\eps}{2}\tilde{C}^2.
\end{align*}

Now we consider $\mV_s^k:=\{x\in \R^N\backslash B_R(0)\, :\, |u_k(x)|\geq s\}$ for $k\in\N$ and $s>2$. Notice that in particular $|u_k(x)-\bar u(x)|> s-1>1$
holds for all $x\in\mV_s^k$. As a consequence, we can write
\begin{align*}
  \int_{\mV_s^k}|f(x,u_k)-f(x,u_k-\bar u)|\, |\varphi|\, dx &\leq 2C_0 \int_{\mV_s^k}(|u_k|^{p-1}-|u_k-\bar u|^{p-1}) |\varphi|\, dx\\
  & \leq 2C_0 (s-1)^{p-2^\ast}\int_{\mV_s^k}(|u_k|^{2^\ast-1}-|u_k-\bar u|^{2^\ast-1}) |\varphi|\, dx\\
  & \leq 2C_0(s-1)^{p-2^\ast}\|u_k\|_{L^{2^\ast}}^{2^\ast-1}-\|u_k-\bar u\|_{L^{2^\ast}}^{2^\ast-1}) \|\varphi\|_{L^{2^\ast}}\\
  & \leq \hat C (s-1)^{p-2^\ast} \tilde C^{2^\ast-1}\|\varphi\|.
\end{align*}
Similarly, we find
\[\int_{\mV_s^k}|F(x,u_k)-F(x,u_k-\bar u)|\, dx \leq \bar C (s-1)^{p-2^\ast} \tilde{C}^{2^\ast}\]
for all $k\in\N$. Choosing $s>2$ large enough, we can achieve $\hat C(s-1)^{p-2^\ast}\tilde{C}^{2^\ast -1}<\eps$ and $\bar C (s-1)^{p-2^\ast}\tilde{C}^{2^\ast}<\eps$.

For the next step, we remark that from the continuity of $f_\infty$ and $F_\infty$, we may choose $\delta=\delta(\eps)>0$ such that $|f_\infty(u)-f_\infty(v)|<\frac{s_0\eps}{3}$
and $|F_\infty(u)-f_\infty(v)|<\frac{s_0^2\eps}{3}$ hold for all $|u-v|<\delta$, $|u|, |v|\leq s+1$.
By (A1), we may now pick $R_0=R_0(\eps)\geq R$ satisfying $|\bar u(x)|<\delta$ for all $|x|\geq R_0$ and $|f(x,u)-f_\infty(u)|<\frac{s_0\eps}{3}$ for all $|u|\leq s+1$ and all $|x|\geq R_0$.
Setting $\mW_k:=\{x\in\R^N\backslash B_{R_0}(0)\, :\, s_0\leq |u_k(x)|\leq s\}$ for $k\in\N$, we obtain
\begin{align*}
  &\int_{\mW_k}|f(x,u_k)-f(x,u_k-\bar u)|\, |\varphi|\, dx \leq \int_{\mW_k}|f(x,u_k)-f_\infty(x,u_k)|\, |\varphi|\, dx\\ 
  &+ \int_{\mW_k}|f_\infty(u_k)-f_\infty(u_k-\bar u)|\, |\varphi|\, dx 
  + \int_{\mW_k}|f_\infty(u_k-\bar u)-f(x,u_k-\bar u)|\, |\varphi|\, dx\\
  &< s_0\eps\|\varphi\|_{L^2} |\mW_k|^{\frac12} \leq s_0\eps \frac{1}{s_0}\|u_k\|_{L^2} \|\varphi\|_{L^2} \leq \eps \tilde{C}\|\varphi\|_{L^2},
\end{align*}
and
\begin{align*}
  &\int_{\mW_k}|F(x,u_k)-F(x,u_k-\bar u)|\, dx \leq \int_{\mW_k}\int_0^1|f(x,t u_k)-f_\infty(x,t u_k)|\, |u_k|\,dt dx\\ 
  &+ \int_{\mW_k} |F_\infty(u_k)-F_\infty(u_k-\bar u)|\, dx\\
  &+ \int_{\mW_k}\int_0^1 |f_\infty(t(u_k-\bar u))-f(x,t(u_k-\bar u))|\, |u_k-\bar u|\, dt dx\\
  &< \frac{s_0\eps}{3} |\mW_k|^{\frac12}\,(\|u_k\|_{L^2}+s_0|\mW_k|^{\frac12}  + \|u_k-\bar u\|_{L^2}) \leq \eps \frac{4\tilde{C}^2}{3}
\end{align*}
for all $k\in\N$. Finally, since $u_k\to \bar u$ strongly in $L^r(B_{R_0}(0))$, $2\leq r<2^\ast$, we can choose $k_0\in\N$ large enough such that
\begin{align*}
  &\int_{B_{R_0}(0)}|f(x,u_k)-f(x,\bar u)-f(x,u_k-\bar u)|\, |\varphi|\, dx \leq \eps \|\varphi\| \\
  \text{and  } &\int_{B_{R_0}(0)}|F(x,u_k)-F(x,\bar u)-F(x,u_k-\bar u)|\, dx \leq \eps
\end{align*}
hold for all $k\geq k_0$. Combining the above estimates, and remarking that 
$(\mU_k\cup \mV_s^k\cup \mW_k)\backslash B_{R_0}(0)=\R^N\backslash B_{R_0}(0)$ holds for all $k$,
we obtain
\begin{align*}
  &\int_{\R^N}|f(x,u_k)-f(x,u_k-\bar u)-f(x,\bar u)|\, |\varphi|\, dx \leq 3\eps (1+\tilde{C}) \|\varphi\| \\
  \text{and }&\int_{\R^N}|F(x,u_k)-F(x,u_k-\bar u)-F(x,\bar u)|\, dx \leq \eps (2+\frac{23}{6}\tilde{C}^2)
\end{align*}
for all $k\geq k_0$. This concludes the proof.
\end{proof}



\begin{thebibliography}{00}

\bibitem{AckWeth05} N. Ackermann and T. Weth, 
Multibump solutions of periodic Schr{\"o}dinger equations in a degenerate setting,
{\em Comm. Contemp. Math.} {\bf 7} (2005) 269--298.

\bibitem{ambrosetti-rabinowitz:73} A. Ambrosetti and P. Rabinowitz, 
Dual variational methods in critical point theory and applications.
{\em J. Functional Analysis} {\bf 14} (1973), 349--381. 

\bibitem{BaLi90} A. Bahri and Y.Y. Li, 
On a min-max procedure for the existence of a positive solution for certain scalar field equations in $\R^N$, 
{\em Rev. Mat. Iberoamericana} {\bf 6} (1990) 1--15.

\bibitem{BaLions97} A. Bahri and P.-L. Lions, 
On the existence of a positive solution of semilinear elliptic equations in unbounded domains, 
{\em Ann. Inst. H. Poincar{\'e} Anal. Non Lin{\'e}aire} {\bf 14} (1997) 365--413.

\bibitem{BeGaKa83} H. Berestycki, T. Gallou{\"e}t and O. Kavian, 
{\'E}quations de champs scalaires euclidiens non lin{\'e}aires dans le plan, 
{\em C.R. Acad. Sc. Paris, S{\'e}rie I} {\bf 297} (1983) 307--310.

\bibitem{Berest_Lions83} H. Berestycki and P.-L. Lions, 
Nonlinear scalar field equations, I. Existence of a ground state,
{\em Arch. Rational Mech. Anal.} {\bf 82} (1983) 313--345.

\bibitem{cds} G. Cerami, G. Devillanova and S. Solimini, 
Infinitely many bound states for some nonlinear scalar field equations, 
{\em Calc. Var. Partial Diff. Equations} {\bf 23} (2005) 139--168.

\bibitem{CW04} M. Clapp and T. Weth, 
Multiple solutions of nonlinear scalar field equations, 
{\em Comm. Part. Diff. Equations} {\bf 29} (2004) 1533--1554.

\bibitem{dn}  W. Y. Ding and W.-M. Ni, 
On the existence of positive entire solutions of a semilinear elliptic equation, 
{\em Arch. Rational Mech. Anal. } {\bf 91} (1986) 283--308.

\bibitem{GHOU} N. Ghoussoub, 
{\em Duality and perturbation methods on critical point theory}, 
Cambridge Tracts in Mathematics, 107, Cambridge University Press, Cambridge, 1993.

\bibitem{GNN} B. Gidas, W.M. Ni and L. Nirenberg, 
Symmetry of positive solutions of nonlinear elliptic equations in $\mathbb{R}^n$,
in: {\em Mathematical analysis and applications, Part A (ed. L. Nachbin)}, 
Adv. in Math. Suppl. Stud., 7A, Academic Press, New York--London, 1981, pp. 369--402.

\bibitem{h}  N. Hirano, 
Multiple existence of sign changing solutions for semilinear elliptic problems on $\mathbb{R}^{N},$ 
{\em Nonlinear Anal.} {\bf 46} (2001), 997--1020.

\bibitem{HuangWang08} Y. Huang and F. Wang, 
On a class of Schr{\"o}dinger equations in $\mathbb{R}^N$ with indefinite linear part,
{\em Nonlinear Anal.} {\bf 69} (2008) 4504--4513.

\bibitem{JANG10} J.D. Jang, 
Uniqueness of positive radial solutions of $\Delta u+f(u)=0$ in $\mathbb{R}^N$, $N\geq 2$,
{\em Nonlinear Anal.} {\bf 73} (2010) 2189--2198.

\bibitem{KrSz98} W. Kryszewski and A. Szulkin, 
Generalized linking theorem with an application to a semilinear Schr{\"o}dinger equation, 
{\em Adv. Differential Equations} {\bf 3} (1998) 441--472.

\bibitem{li}  Y. Y. Li, 
Existence of multiple solutions of semilinear elliptic equations in $\mathbb{R}^{N},$ 
in: {\em Variational Methods (Paris, 1988)}, Progr. Nonlinear Differential Equations Appl., 4, 
Birkh\"{a}user, Boston, 1990, pp. 133--159.

\bibitem{LIONS84_1} P.L. Lions, 
The concentration-compactness principle in the calculus of variations. The locally compact case I, 
{\em Ann. Inst. H. Poincar\'e Anal. Non Lin{\'e}aire} {\bf 1} (1984) 109--145.

\bibitem{LIONS84_2} P.L. Lions, 
The concentration-compactness principle in the calculus of variations. The locally compact case II, 
{\em Ann. Inst. H. Poincar\'e Anal. Non Lin{\'e}aire} {\bf 1} (1984) 223--283.

\bibitem{PANKOV05} A. Pankov, 
Periodic nonlinear Schr\"odinger equation with application to photonic crystals,
{\em Milan J. Math.} {\bf 73} (2005) 259--287.

\bibitem{PANKOV08} A. Pankov, 
On decay of solutions to nonlinear Schr\"odinger equations, 
{\em Proc. Amer. Math. Soc.} {\bf 136} (2008) 2565--2570.

\bibitem{RT} M. Ramos, H. Tavares, 
Solutions with multiple spike patterns for an elliptic system, 
{\em Calc. Var. Partial Diff. Equations} {\bf 31} (2008) 1--25.

\bibitem{RY} M. Ramos, J.F. Yang, 
Spike-layered solutions for an elliptic system with Neumann boundary conditions, 
{\em Trans. Amer. Math. Soc.} {\bf 357} (2005) 3265--3284.

\bibitem{SIMON82} B. Simon, 
Schr\"odinger semigroups, 
{\em Bull. Amer. Math. Soc.} {\bf 7} (1982) 447--526.

\bibitem{STUART98} C.A. Stuart, 
An introduction to elliptic equations on $\R^N$, 
in: {\em Nonlinear functional analysis and applications to differential equations (Trieste, 1997)}, 
World Sci. Publ., River Edge NJ, 1998, pp. 237--285.

\bibitem{STRUWE} M. Struwe, 
{\em Variational methods: applications to nonlinear partial differential equations and Hamiltonian systems}, 
Springer-Verlag, Berlin, 1990.

\bibitem{SW2009} A. Szulkin and T. Weth, 
Ground state solutions for some indefinite variational problems, 
{\em J. Funct. Anal.} {\bf 257} (2009) 3802--3822.

\bibitem{W} A. Szulkin and T. Weth, 
The method of Nehari manifold, 
in: {\em Handbook of nonconvex Analysis and Applications, D.Y. Gao and D. Motreanu eds}, 
International Press, Boston (2010), pp. 597--632.

\bibitem{WILLEM} M. Willem, 
{\em Minimax Theorems}, 
Progr. in Nonlinear Differential Equations Appl., 24,
Birkh\"auser, Boston, 1996.

\bibitem{zh}  X. P. Zhu, 
Multiple entire solutions of a semilinear elliptic equation, 
{\em Nonlinear Anal.} {\bf 12} (1988) 1297--1316.

\end{thebibliography}
\end{document}